\newtheorem{thm}{{\bf Theorem}}[section]
\newtheorem{lemma}[thm]{{\bf Lemma}}
\newtheorem{prop}[thm]{{\bf Proposition}}
\newtheorem{cor}[thm]{{\bf Corollary}}
\newtheorem*{defn}{{\bf Definition}}
\newtheorem*{rmk}{{\bf Remark}}
\newtheorem*{conv}{{\bf Convention}}
\newcommand{\RNum}[1]{\uppercase\expandafter{\romannumeral #1\relax}}
\begin{document}

%%%%%%%%%%%%%%%%%%%%%%%%%%%%%%%%%%%%%%%%%%%%%%%%%%
%Document Information
%%%%%%%%%%%%%%%%%%%%%%%%%%%%%%%%%%%%%%%%%%%%%%%%%%

\title[Triangular and Unitriangular Factorization]{Triangular and Unitriangular Factorization of \\ Twisted Chevalley Groups}

%%%%%%%%%%%%%%%%%%%%%%%%%%%%%%%%%%%%%%%%%%%%%%%%%%

\author{Shripad M. Garge}
\address{Department of Mathematics, 
	Indian Institute of Technology Bombay,\newline \indent
	Powai, Mumbai 400076, India}
\email{\href{mailto:shripad@math.iitb.ac.in}{shripad@math.iitb.ac.in}, \href{mailto:smgarge@gmail.com}{smgarge@gmail.com}}
%\thanks{}

\author{Deep H. Makadiya}
\address{Department of Mathematics, 
	Indian Institute of Technology Bombay,\newline \indent
	Powai, Mumbai 400076, India}
\email{\href{mailto:deepmakadia25.dm@gmail.com}{deepmakadia25.dm@gmail.com}}
%\thanks{}

%%%%%%%%%%%%%%%%%%%%%%%%%%%%%%%%%%%%%%%%%%%%%%%%%%

%\subjclass[2010]{20G15, 11E72}
%\keywords{}

%%%%%%%%%%%%%%%%%%%%%%%%%%%%%%%%%%%%%%%%%%%%%%%%%%

\begin{abstract}
    The existence of triangular and unitriangular factorizations has been extensively studied for untwisted Chevalley groups, as well as for twisted Chevalley groups of types other than ${}^2A_{2n} \ (n \geq 1)$ (see~\cite{SSV1,SSV2}). 
    However, the case of twisted Chevalley groups of type ${}^2A_{2n} \ (n \geq 1)$, has remained unresolved in the general setting of commutative rings. 
    Prior work by A.~Smolensky~\cite{AS} addressed this case only over certain fields, including finite fields and the field of complex numbers. 
    These results indicate that, even over fields, the ${}^2A_{2n}$ case demands more refined techniques, reflecting the difficulty of extending such factorizations to the broader class of commutative rings. 

    In this paper, we introduce two new classes of commutative rings: those satisfying the \emph{special stable range one condition} and those that are \emph{$\theta$-complete}. We discuss their basic properties and provide illustrative examples. Our main result establishes the existence of triangular and unitriangular factorizations for twisted Chevalley groups of type ${}^2A_{2n}$ over a certain class of commutative rings, which includes all fields, all local rings (with mild restrictions), and several other important classes of rings.
\end{abstract}

%%%%%%%%%%%%%%%%%%%%%%%%%%%%%%%%%%%%%%%%%%%%%%%%%%

\maketitle 
%{\hfill \today}
%\tableofcontents

%%%%%%%%%%%%%%%%%%%%%%%%%%%%%%%%%%%%%%%%%%%%%%%%%%%%%%%%%%%%%%
%Section - Introduction
%%%%%%%%%%%%%%%%%%%%%%%%%%%%%%%%%%%%%%%%%%%%%%%%%%%%%%%%%%%%%%

\section{Introduction}\label{sec:intro}

Let $\Phi$ be a reduced irreducible root system, and let $R$ be a commutative ring with unity. Let $G_{sc}(\Phi, R)$ denote the simply connected Chevalley group of type $\Phi$, and let $E_{sc}(\Phi, R)$ be its elementary subgroup.
Since we work exclusively with simply connected groups, we will omit the subscript $\mathrm{sc}$ and use the simplified notation $G(\Phi, R)$ and $E(\Phi, R)$ in place of $G_{sc}(\Phi, R)$ and $E_{sc}(\Phi, R)$, respectively.

%%%%%%%%%%%%%%%%%%%%%%%%%%%%%%%%%%%%%%%%%%%%%%%%%%%%%%%%%%%%%%

The problem of finding ``$LU$-type" decomposition of Chevalley groups $G = G(\Phi, R)$ (or $E(\Phi, R)$ or any classical groups) over rings has been a subject of sustained interest over the last few decades. Two prominent forms of such decompositions are as follows:

\begin{itemize}
    \item \textbf{Triangular factorizations}, in which the group $G$ is expressed in the form
    \[
        G = B U^{-} U^{+} U^{-} \cdots U^{\pm} = T U^{+} U^{-} \cdots U^{\pm},
    \]
    where $T = T(\Phi, R)$ is a (fixed) split maximal torus, $B = B(\Phi, R)$ is a standard Borel subgroup containing $T$, and $U^{+} = U(\Phi, R)$ and $U^{-} = U^{-}(\Phi, R)$ are the unipotent radicals of $B$ and its opposite $B^{-}$, respectively.

    \item \textbf{Unitriangular factorizations}, which aim to express \( G \) in the form:
    \[
        G = U^{+} U^{-} U^{+} \cdots U^{\pm},
    \]
    with \( U^{+} \) and \( U^{-} \) as above.
\end{itemize}

A central question in this context is whether such factorizations exist for a given ring $R$. When they do, a natural objective is to determine the minimal number of unipotent factors required to express the group.

In general, the existence of such factorizations cannot be guaranteed for arbitrary rings. As a result, one typically restricts attention to specific classes of rings for which triangular and unitriangular factorizations are expected to hold. 

These questions have been the subject of extensive study, and a substantial body of literature is dedicated to their investigation. For a detailed historical overview, including developments related to classical groups, we refer the reader to~\cite{SSV1,SSV2} and the references therein.

%%%%%%%%%%%%%%%%%%%%%%%%%%%%%%%%%%%%%%%%%%%%%%%%%%%%%%%%%%%%%%

\subsection{Some Known Results} 

In this subsection, we recall the main results from \cite{SSV1} and \cite{SSV2}. Before doing so, we introduce the stable range one condition for rings.

A ring $R$ satisfies the stable range one condition \textbf{$(SR_1)$} (in the sense of Bass~\cite{HB}), if for all $a,b \in R$ such that $aR + bR = R$, there exists a $z \in R$ for which $a + bz \in R^*$.

\begin{thm}[{\cite[Theorem 1.1]{SSV2}}]\label{thm:SSV2}
    \normalfont
    Let $\Phi$ be a reduced irreducible root system and $R$ be a commutative ring satisfying stable range one $(SR_1)$ condition. Then the elementary Chevalley group $E(\Phi, R)$ admits triangular decomposition
    \[
        E(\Phi, R) = H(\Phi, R) \, U(\Phi, R) \, U^{-}(\Phi, R) \, U(\Phi, R),
    \]
    where $H(\Phi, R) = T(\Phi, R) \cap E(\Phi, R)$.
    Conversely, if the above triangular decomposition holds for some elementary Chevalley group, then $R$ satisfies the $(SR_1)$ condition.
\end{thm}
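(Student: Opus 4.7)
My plan is to prove the two implications separately, handling the forward direction by induction on the rank of $\Phi$.

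The base case is $\Phi = A_1$, so $E(\Phi, R) = E_2(R)$. Given $g = \begin{pmatrix} a & b \\ c & d \end{pmatrix} \in E_2(R)$, the relation $ad - bc = 1$ makes $(a, c)$ unimodular, so $(SR_1)$ supplies $z \in R$ with $a + cz \in R^*$. Left multiplication by $\begin{pmatrix} 1 & z \\ 0 & 1 \end{pmatrix} \in U^+$ makes the $(1,1)$-entry a unit, placing the result in the big Bruhat cell $U^- \cdot H \cdot U^+$. Since $H$ normalizes both $U^+$ and $U^-$, we rewrite $g \in U^+ \cdot (U^- \cdot H \cdot U^+) = H \cdot U^+ \cdot U^- \cdot U^+$, as desired.

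For the inductive step, I would reduce the general case to the same ``land in the big cell'' problem: show that for any $g \in E(\Phi, R)$ there exists $u \in U^+$ with $u g \in U^- \cdot H \cdot U^+$. One natural route is to work with a minuscule fundamental representation $V$ of $G(\Phi, R)$ (or a microweight representation when $\Phi$ admits no minuscule one) with highest weight vector $v^+$. The coordinates of $g \cdot v^+$ in a weight basis of $V$ are jointly unimodular, and $(SR_1)$ -- applied inductively to clean these coordinates one at a time using the commutator relations between positive root subgroups -- produces the required $u \in U^+$. Once $g$ is in the big cell, rearranging as in the base case yields the four-factor decomposition. The inductive scaffolding enters when one peels off the contribution from a highest-weight root and reduces to the Levi of a maximal parabolic, whose derived group is a smaller-rank Chevalley group to which the hypothesis applies.

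For the converse, assume $E(\Phi, R) = H U^+ U^- U^+$ and take $a, b \in R$ with $aR + bR = R$. Fixing a root $\alpha \in \Phi$, work inside the rank-one subgroup $\langle x_{\pm \alpha}(R) \rangle \subseteq E(\Phi,R)$ and consider a specific element built from $x_\alpha$, $x_{-\alpha}$ whose matrix coordinates encode the pair $(a, b)$. Applying the assumed factorization to this element and reading off entries in a faithful representation produces a witness $z$ with $a + bz \in R^*$, establishing $(SR_1)$.

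The main obstacle is the inductive step of the forward direction: arguing that a \emph{single} application of $U^+$ on the left (and not a longer alternating product) suffices to place $g$ in the big Bruhat cell. Controlling this requires careful use of the Chevalley commutator formulas $[x_\alpha(s), x_\beta(t)] = \prod_{i, j > 0} x_{i\alpha + j\beta}\bigl(N_{\alpha\beta}^{ij} s^i t^j\bigr)$ to ensure that contributions coming from different positive roots combine cleanly within $U^+$, together with disciplined bookkeeping to guarantee the simultaneous invertibility of all relevant principal minors after that single $U^+$-multiplication. Here $(SR_1)$ will have to be invoked repeatedly, coordinate by coordinate, rather than just once as in the base case.
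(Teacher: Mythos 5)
Your rank-one base case is correct and is exactly the standard one, but the heart of the theorem is never proved. The paper itself does not prove this statement at all (it quotes it from~\cite{SSV2}); the proof there, and the mechanism this paper reuses for the twisted analogues (Theorem~\ref{thm:rank reduction theorem} together with Lemma~\ref{lemma:symmetric generator}), is Tavgen's rank reduction: one inducts on the rank using the two terminal Levi subsystems $\Phi_1$ and $\Phi_\ell$ of the Dynkin diagram, so that $(SR_1)$ is invoked only in the rank-one case and no higher-rank ``big cell'' statement is ever needed. Your inductive step instead proposes to put an arbitrary $g\in E(\Phi,R)$ into $U^{-}HU^{+}$ by a \emph{single} left multiplication from $U^{+}$, by cleaning the coordinates of $g\cdot v^{+}$ in a minuscule representation. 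This is a genuine gap, for two reasons. First, that single-multiplication claim is essentially the theorem itself (it is equivalent to $E=U^{+}U^{-}HU^{+}$), and you explicitly leave it unresolved, so nothing beyond rank one has been established; the difficulty is not ``bookkeeping'' with commutator formulas, since clearing the weight coordinates one at a time uses root elements attached to different positive roots whose corrections need not stay within a single $U^{+}$ factor --- controlling this is precisely what an argument must supply. Second, the proposed tool does not even exist uniformly: minuscule and microweight representations are the same thing, and the types $E_8$, $F_4$, $G_2$ (which the theorem covers) admit none, so your fallback is vacuous there.

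The converse is also only a plan, not a proof. You never exhibit the element ``encoding $(a,b)$'' nor the computation extracting $z$ with $a+bz\in R^{*}$. Moreover, the hypothesis is a factorization of the whole group $E(\Phi,R)$, whose factors $U^{\pm}(\Phi,R)$ involve all positive roots; one cannot simply ``work inside $\langle x_{\pm\alpha}(R)\rangle$'' and read off $2\times 2$ entries without an explicit device (e.g.\ acting on a highest weight vector killed by all positive root subgroups, or comparing the relevant matrix entries in a chosen faithful representation) that isolates the rank-one data from the remaining root contributions. The standard argument applies the assumed factorization to an explicit element such as $x_{\alpha}(a)\,x_{-\alpha}(b)$ and compares the first column of $h\,u^{+}u^{-}u^{+\prime}$, which has the shape $(\varepsilon(1+xy),\ \varepsilon^{-1}y)$, to produce the unit $a+bz$; some version of this computation, together with the isolation step just mentioned, is what your sketch is missing.
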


\begin{rmk}
        The triangular factorization described in the preceding theorem is also referred to as the \emph{Gauss decomposition}.
\end{rmk}

\begin{thm}[{\cite[Theorem 1]{SSV1}}]\label{thm:SSV1}
    \normalfont
    Let $\Phi$ be a reduced irreducible root system and $R$ be a commutative ring satisfying stable range one $(SR_1)$ condition. Then the elementary Chevalley group $E(\Phi, R)$ admits unitriangular factorization
    \[
        E(\Phi, R) = U(\Phi, R) \, U^{-}(\Phi, R) \, U(\Phi, R) \, U^{-}(\Phi, R),
    \]
    of length $4$.
\end{thm}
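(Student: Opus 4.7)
My strategy is to bootstrap from Theorem~\ref{thm:SSV2}, which already supplies the triangular decomposition $E(\Phi, R) = H(\Phi, R) \, U \, U^{-} \, U$. Thus the only additional content in Theorem~\ref{thm:SSV1} is to show that the torus factor $H$ can be absorbed, yielding an alternating product of exactly four unipotent factors. I would first treat the rank-one case by an explicit $SL_{2}$-calculation, and then propagate the result to general rank via the Chevalley commutator formulas and the $(SR_{1})$-hypothesis.

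\emph{Step 1 (rank-one identity).} For each root $\alpha$ and each $\lambda \in R^{*}$, I would verify that the generator $h_{\alpha}(\lambda)$ of $H$ lies in $U\,U^{-}\,U\,U^{-}$ within the associated $SL_{2}$-subgroup. A direct matrix computation yields, for instance,
\[
\operatorname{diag}(\lambda, \lambda^{-1}) = x_{\alpha}(\lambda-1) \, x_{-\alpha}(1) \, x_{\alpha}((1-\lambda)\lambda^{-1}) \, x_{-\alpha}(-\lambda),
\]
which places each rank-one torus generator in the required alternating pattern. Since $H$ is generated by such elements as $\alpha$ runs over the simple roots of $\Phi$, every $h \in H$ can at least be realised as \emph{some} word in positive and negative root subgroup elements.

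\emph{Step 2 (collapsing to length four).} The heart of the argument is to rewrite the concatenation $H \cdot U\,U^{-}\,U$ as a single product $U\,U^{-}\,U\,U^{-}$. The plan is to iteratively apply the Chevalley commutator relations, migrating positive root subgroup factors to the two positive slots and negative ones to the two negative slots, merging parallel factors within $U$ and $U^{-}$. Whenever a mixed product $x_{-\alpha}(t)\,x_{\alpha}(s)$ with unimodular parameters is created in the course of this rearrangement, the $(SR_{1})$-hypothesis is invoked -- in exactly the manner used for Theorem~\ref{thm:SSV2} -- to rewrite it inside a rank-one Bruhat cell, producing a torus element that can then be pushed to a slot still carrying torus contribution and absorbed via the identity of Step~1.

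\emph{Main obstacle.} I expect the central difficulty to lie in this collapsing stage, because each commutator rewrite introduces root subgroup factors in new (non-simple) roots together with intermediate torus contributions, and the length bound four is tight. One must confirm that the residual torus component after all rearrangements can be \emph{fully} absorbed into the final $U^{-}$ slot, rather than leaving a leftover that would force a fifth factor. The $(SR_{1})$-condition is the essential ingredient making these Bruhat-cell rewrites available uniformly; indeed, as the converse part of Theorem~\ref{thm:SSV2} already suggests, the length-four bound and the $(SR_{1})$-hypothesis are delicately matched.
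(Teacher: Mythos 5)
Your Step~1 identity is correct (it is the standard expression of $h_\alpha(\lambda)$ as a product of four root elements), but the proposal has a genuine gap: Step~2, which is the entire content of the theorem once Theorem~\ref{thm:SSV2} is granted, is asserted rather than proved. First, Step~1 only places each \emph{generator} $h_\alpha(\lambda)$ in $U\,U^{-}\,U\,U^{-}$; a general $h\in H$ is a product of such elements over different simple roots, and since $U\,U^{-}\,U\,U^{-}$ is not a subgroup, even the inclusion $H\subset U\,U^{-}\,U\,U^{-}$ does not follow. Second, the proposed ``migration'' of factors via the Chevalley commutator formula cannot handle the crucial case: there is no commutator formula for a pair of opposite roots, and rewriting $x_{-\alpha}(t)\,x_\alpha(s)$ via an $(SR_1)$-assisted rank-one Bruhat cell reintroduces a torus factor each time, so nothing in the sketch shows the process terminates within exactly four alternating factors. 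The difficulty is real: for instance, the naive absorption $E=H\,U\,U^{-}\,U=U\,U^{-}\,(H\,U)$ would follow if $H\,U\subset U^{-}\,U\,U^{-}$, but already in $SL_2$ one checks that $\mathrm{diag}(\lambda,\lambda^{-1})\notin U^{-}\,U\,U^{-}$ unless $\lambda=1$, so the torus cannot simply be pushed into the remaining slots.

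The argument the cited source~\cite{SSV1} uses (and which this paper mirrors for the twisted case) is different and avoids torus absorption altogether. In rank one, for $g=\begin{pmatrix} a & b\\ c & d\end{pmatrix}\in E(A_1,R)$ the row $(c,d)$ is unimodular, so $(SR_1)$ gives $z$ with $c+dz\in R^*$; then $g\,x_{-\alpha}(z)=x_\alpha(*)\,x_{-\alpha}(c+dz)\,x_\alpha(*)$ \emph{exactly}, with no torus factor, whence $g\in U\,U^{-}\,U\,U^{-}$. The general case then follows from Tavgen's rank-reduction argument, which propagates a length-$L$ unitriangular factorization from the boundary subsystems $\Phi_1,\Phi_\ell$ to all of $\Phi$ without increasing $L$ (this is exactly the mechanism of Theorem~\ref{thm:rank reduction theorem}(b) together with the base case as in Lemma~\ref{lemma:base case for unitriangular}). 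To repair your proof you should replace Step~2 by this rank-one computation plus rank reduction, rather than attempting to collapse $H\,U\,U^{-}\,U$ by commutator manipulations.
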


As noted in \cite{SSV1} and \cite{AS}, the theorems stated above also hold for twisted Chevalley groups of all types except ${}^2A_{2n}$, under somewhat stronger conditions on the ring. The remaining case of groups of type ${}^2A_{2n}$ demands a significantly more delicate analysis, which is the primary focus of this paper. Nevertheless, we formulate the theorems for twisted Chevalley groups of all types. Our main results are presented in the following subsection.

%%%%%%%%%%%%%%%%%%%%%%%%%%%%%%%%%%%%%%%%%%%%%%%%%%%%%%%%%%%%%%

\subsection{Main Results} 

Let $R$ be a commutative ring with unity, and let $\Phi$ be an irreducible root system of type $A_n \ (n \geq 2)$, $D_n \ (n \geq 4)$ or $E_6$.

%%%%%%%%%%%%%%%%%%%%%%%%%%%%%%%%%%%%%%%%%%%%%%%%%%%%%%%%%%%%%%

\subsubsection{}

Let $G = G(\Phi, R)$ be the (simply connected) Chevalley group over $R$ of type $\Phi$, and let $E = E(\Phi, R)$ denote the corresponding elementary subgroup. Fix a simple root system $\Delta$ of $\Phi$. An angle-preserving permutation $\rho$ of $\Delta$ induces an automorphism of $G$, known as a \emph{graph automorphism}, which we also denote by $\rho$. Fix a nontrivial graph automorphism $\rho$ of $\Phi$. Then the order of $\rho$ is either $2$ or $3$, with order $3$ occurring only in the case $\Phi \sim D_4$. Let $\Phi_\rho$ denote the associated twisted root system.

Similarly, a ring automorphism $\theta: R \to R$ induces a group automorphism of $G$, also denoted by $\theta$, which is referred to as the \emph{ring automorphism} of $G$. Suppose that $R$ admits a ring automorphism $\theta$ whose order coincides with that of the graph automorphism $\rho$. Define $\sigma = \theta \circ \rho$; then $\sigma$ is an automorphism of $G$ of order $o(\rho)$.

The subgroup $G_\sigma = G_\sigma(\Phi, R) = \{ g \in G \mid \sigma(g) = g \}$ is called the \emph{twisted Chevalley group} over $R$ of type ${}^n X$ (where $X$ denotes the type of $\Phi$, and the superscript $n$ indicates the order of the automorphism $\sigma$). The corresponding elementary subgroup $E'_\sigma = E'_\sigma(\Phi, R)$ is referred to as the \emph{elementary twisted Chevalley group} over $R$ of type ${}^n X$.

For any subset \( A \subset G \), we define \( A_\sigma := A \cap G_\sigma \). In particular, $E_\sigma = E_\sigma(\Phi, R) := E(\Phi, R) \cap G_\sigma(\Phi, R)$. In a similar manner, we define \( U_\sigma = U_\sigma(\Phi, R) \), \( U_\sigma^{-} = U_\sigma^{-}(\Phi, R) \), \( T_{\sigma} = T_\sigma(\Phi, R) \), \( H_\sigma = H_\sigma(\Phi, R) \), and so on. Finally, let $H'_\sigma = H'_\sigma (\Phi, R):= H(\Phi, R) \cap E'_{\sigma}(\Phi, R)$. If $E_\sigma = E'_\sigma$ then $H_\sigma = H'_\sigma$. (All of these notions will be introduced in detail in the following section.)

%%%%%%%%%%%%%%%%%%%%%%%%%%%%%%%%%%%%%%%%%%%%%%%%%%%%%%%%%%%%%%

\subsubsection{}

Let $R^*$ denote the group of units in $R$. 
Consider the following subsets of $R$:
\[
R_\theta = \{ r \in R \mid r = \bar{r} \} \quad \text{and} \quad R_{\theta}^{-} = \{ r \in R \mid r = -\bar{r} \}.
\]
It is clear that $R_\theta$ is a subring of $R$, and hence $R$ naturally inherits the structure of an $R_\theta$-algebra. 
If $2 \in R^*$, then $R$ admits the direct sum decomposition
\[
R = R_\theta \oplus R_{\theta}^{-}.
\]
Furthermore, if $2 \in R^{*}$ and $R_{\theta}^{-} \cap R^* \neq \emptyset$, then the $R_\theta$-module $R$ has rank $2$.
In this case, if $x \in R_\theta \cap R^*$ and $y \in R_{\theta}^{-} \cap R^*$, then $\{x, y\}$ forms a basis of $R$ as an $R_\theta$-module.
Finally, define the sets
\[
\mathcal{A}(R) = \{ (t,u) \in R^2 \mid t\bar{t} = u + \bar{u} \} \quad \text{and} \quad \mathcal{A}(R)^* = \{ (t,u) \in \mathcal{A}(R) \mid u \in R^* \}. 
\]

%%%%%%%%%%%%%%%%%%%%%%%%%%%%%%%%%%%%%%%%%%%%%%%%%%%%%%%%%%%%%%

\subsubsection{}

Our first objective is to establish a result analogous to Theorem~\ref{thm:SSV2}. To this end, we introduces a new condition analogous to the stable range one condition $(SR_1)$.

\begin{defn}
    \normalfont
    Let $R$ be a commutative ring equipped with an involution $\theta$.
    \begin{enumerate}[(a)]
        \item A vector $(a, b, c) \in R^3$ is called \emph{special unitary completable} if there exists a matrix $A \in SU(3, R)$ such that the first row of $A$ is  $(a, b, c)$.
        \item The ring $R$ satisfies the \emph{special stable range one condition} $(SSR_1)$, if for every special unitary completable vector $(a, b, c) \in R^3$, there exists a pair $(z_1, z_2) \in \mathcal{A}(R)$ such that $a + b z_1 + c z_2 \in R^*$.
    \end{enumerate}
\end{defn}

\begin{rmk}
    To the best of the author's knowledge, this variant of the stable range one condition has not been previously documented in the existing literature. Examples of rings satisfying the $(\mathrm{SSR}_1)$ condition include fields, local rings (with mild conditions), and others (see Section~\ref{sec:SSR1}).
\end{rmk}

\begin{thm}[Triangular Decomposition]\label{thm:triangular decomposition}
    \normalfont
    Let $\Phi_\rho$ be a twisted root system of type ${}^2 A_n \ (n \geq 2), {}^2 D_n \ (n \geq 4), {}^2E_6$ or ${}^3 D_4$.
    Let $R$ be a commutative ring with unity, and suppose there exists a ring automorphism $\theta: R \longrightarrow R$ of order $2$ if $\Phi_\rho \not\sim {}^3 D_4$, or of order $3$ if $\Phi_\rho \sim {}^3 D_4$. 
    Assume that:
    \begin{enumerate}[(a)]
        \item Both $R$ and $R_\theta$ satisfy the $(SR_1)$ condition if $\Phi_\rho \sim {}^2 A_{2n+1}$ ($n \geq 1$), ${}^2 D_n$ ($n \geq 4$), ${}^2 E_6$, or ${}^3 D_4$;
        \item $R$ satisfies the $(SSR_1)$ condition if $\Phi_\rho \sim {}^2 A_{2}$;
        \item $R$ satisfies both the $(SR_1)$ and $(SSR_1)$ conditions if $\Phi_\rho \sim {}^2 A_{2n}$ ($n \geq 2$).
    \end{enumerate}
    Then the elementary twisted Chevalley group $E'_\sigma(\Phi, R)$ admits the triangular decomposition:
    \begin{align*}
        E'_\sigma(\Phi, R) &= H'_\sigma(\Phi, R) \, U_\sigma(\Phi, R) \, U^{-}_\sigma(\Phi, R) \, U_\sigma(\Phi, R) \\
        &= H'_\sigma(\Phi, R) \, U^{-}_\sigma(\Phi, R) \, U_\sigma(\Phi, R) \, U^{-}_\sigma(\Phi, R).
    \end{align*}
\end{thm}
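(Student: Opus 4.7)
The plan is to reduce, type by type, to a computation on rank-one twisted Levi subgroups, and to patch these together by an induction on the rank of $\Phi_\rho$. The global framework is the Bruhat-style argument of Smolensky--Sury--Vavilov: at each stage one picks a simple (twisted) root $\alpha$, works modulo the associated parabolic, uses a stable-range hypothesis to produce a unit in a distinguished vector, and rewrites the element via the Chevalley commutator relations among twisted root subgroups. For the types ${}^2A_{2n+1}$, ${}^2D_n$, ${}^2E_6$, and ${}^3D_4$, every rank-one Levi is a twisted $SL_2$ over $R$ or over $R_\theta$, and the proof of Theorem~\ref{thm:SSV2} adapts essentially verbatim once one uses $(SR_1)$ for $R$ or $R_\theta$ according as $\alpha$ is fixed by $\sigma$ or paired with another simple root; this disposes of case (a).

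The substantive content is case (c) and its rank-one instance (b). For $\Phi_\rho \sim {}^2A_{2n}$ the twisted system is $BC_n$; most simple roots yield $SL_2$-Levis over $R$, handled by $(SR_1)$, but the unique short simple root $\alpha$ with $2\alpha$ also a twisted root gives the rank-one Levi $SU(3,R)$, whose positive root subgroup $X_\alpha$ is a Heisenberg-type group parametrized by $\mathcal{A}(R)$. I would first settle case (b). Given $g \in SU(3,R)$ with first row $(a,b,c)$, this vector is by definition special unitary completable, so $(SSR_1)$ produces $(z_1, z_2) \in \mathcal{A}(R)$ with $a + bz_1 + cz_2 \in R^{\ast}$. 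Right multiplication by the corresponding element of $X_\alpha$ places a unit at position $(1,1)$, and the standard sequence of row-and-column sweeps by $X_\alpha$ and $X_{-\alpha}$ then drives $g$ into $H'_\sigma$, yielding the four-factor form $SU(3,R) = H'_\sigma \, U_\sigma \, U^-_\sigma \, U_\sigma$.

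With the ${}^2A_2$ rank-one case in hand, case (c) follows by induction on the rank of $\Phi_\rho$: at each step peel off a simple root, apply $(SR_1)$ for the ordinary $SL_2$-Levis and the case (b) computation for the distinguished short root, and use the twisted commutator formulas to collapse the accumulating product back into the shape $H'_\sigma \, U_\sigma \, U^-_\sigma \, U_\sigma$. The opposite factorization $E'_\sigma = H'_\sigma \, U^-_\sigma \, U_\sigma \, U^-_\sigma$ then follows by conjugating the first decomposition by an element of $N_\sigma$ representing the longest element of the relative Weyl group, which interchanges $U_\sigma$ and $U^-_\sigma$.

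The main obstacle I anticipate is the rank-one step inside $SU(3,R)$: after the $(SSR_1)$ application has placed a unit in position $(1,1)$, the entries produced by the subsequent row and column sweeps must themselves satisfy the constraint $t\bar{t} = u + \bar{u}$ in order to lie in $X_\alpha$. This is forced by remaining inside $SU(3,R)$, but it has to be tracked explicitly so that no hypothesis stronger than $(SSR_1)$ creeps in. A secondary technical point is the book-keeping in the inductive patching: because $X_\alpha$ for the short simple root is non-abelian, the commutator relations must be applied in a carefully chosen order to keep factors on the correct side as successive simple roots are peeled off and to ensure the final product has exactly four factors rather than more.
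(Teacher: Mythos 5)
Your overall strategy coincides with the paper's: the rank-one base cases are treated exactly as here (Lemma~\ref{lemma:base case for triangular}), namely $(SR_1)$ for $R_\theta$ or $R$ for classes of type $A_1, A_1^2, A_1^3$ via \cite{SSV2}, and, for the $A_2$-class, the $SU(3,R)$ computation in which $(SSR_1)$ makes the $(1,1)$-entry a unit and one lower-unipotent sweep plus the unitarity relations such as (\ref{U11}) and (\ref{U'32}) force the remaining matrix into $h(\ast)\,x_{+}(\ast,\ast)$ (Theorem~\ref{thm:SSR_1}); your point about tracking the constraint $t\bar t = u + \bar u$ is precisely what (\ref{U11}) provides. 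Your route to the opposite-order factorization by conjugating with a representative of the longest element is a harmless variant (note that $(SSR_1)$ applied to the first row $(0,0,1)$ of $w(1)$ shows $\mathcal{A}(R)^*\neq\emptyset$, so such a representative exists in $E'_\sigma$ and normalizes $H'_\sigma$); the paper instead obtains both orders directly from parts (b) and (c) of Theorem~\ref{thm:SSR_1}.

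The genuine gap is in your inductive step. You describe it as: pick a simple root, work modulo the associated parabolic, use a stable-range hypothesis to produce a unit, then ``collapse the accumulating product'' back into the shape $H'_\sigma U_\sigma U^-_\sigma U_\sigma$, and you yourself flag the control of the number of factors as unresolved. That control is the whole point, and the descent you sketch, if read literally as Gauss elimination in $SU(2n+1,R)$, would require a stable-range statement about special-unitary-completable vectors of length $2n+1$, which $(SSR_1)$ (a condition on $SU(3,R)$ only) does not supply. The paper's induction (Tavgen's rank reduction, Theorem~\ref{thm:rank reduction theorem}) involves no unit production and no collapsing at all: one shows the set $Y_a = H'_\sigma U^{+}_\sigma U^{-}_\sigma U^{+}_\sigma$ is stable under left multiplication by every generator $x_{[\alpha]}(t)$ with $\pm[\alpha]$ simple, using that $[\alpha]$ lies in one of the two terminal subsystems $\Phi_1,\Phi_\ell$, the Levi decompositions $U^{\pm}_\sigma(\Phi,R) = U^{\pm}_\sigma(\Phi_i,R)\ltimes U^{\pm}_\sigma(\Sigma_i,R)$ of Lemma~\ref{lemma:Levi decomposition}, and the same-length factorization of $E'_\sigma(\Phi_i,R)$; Lemma~\ref{lemma:symmetric generator} then gives $Y_a = E'_\sigma(\Phi,R)$, with the length four preserved automatically. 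Your plan becomes a proof once the ``peel and collapse'' induction is replaced by this invariant-set argument, together with the verification (Section~\ref{subsec:parabolic}) that the closed-set/Levi machinery applies to the $BC_n$-type twisted system of ${}^2A_{2n}$, which is exactly the extension beyond \cite{OT1} observed by \cite{AS} and proved here.
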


%%%%%%%%%%%%%%%%%%%%%%%%%%%%%%%%

\subsubsection{}

Our next goal is to prove a result analogous to Theorem~\ref{thm:SSV1}. Before proceeding, we address the following question: under what conditions does the equality $T_\sigma(\Phi, R) = H'_\sigma(\Phi, R)$ hold for $\Phi \sim A_{2n}$ ($n \geq 1$)? To answer this, we introduce a new class of rings. 

Define the set
\[
\mathcal{B}_1(R) = \{ u \in R \mid \text{there exists } t \in R \text{ such that } (t,u) \in \mathcal{A}(R)^{*}\}.
\]
If $\mathcal{A}(R)^* \neq \emptyset$ (equivalently, $\mathcal{B}_1(R) \neq \emptyset$), then for any $k \in \mathbb{N}$, define
\[
\mathcal{B}_k(R) = \{ u \in R \mid \text{there exist } u_1, \dots, u_k \in \mathcal{B}_1(R) \text{ such that } u = u_1 \dots u_k \}.
\]
Otherwise, set $\mathcal{B}_k(R) = \emptyset$.  
Now, define the sets  
\[
\mathcal{C}_{\text{even}}(R) = \bigcup_{\ell \in \mathbb{N}} \mathcal{B}_{2\ell}(R)
\quad \text{and} \quad
\mathcal{C}_{\text{odd}}(R) = \bigcup_{\ell \in \mathbb{N}} \mathcal{B}_{2\ell - 1}(R).
\]

\begin{defn}
    \normalfont
    Let $R$ be a commutative ring with unity, and suppose there exists an automorphism $\theta: R \to R$ of order $2$. We say that $R$ is $\theta$-complete if $\mathcal{C}_{\text{even}}(R) = R^*$.
\end{defn}

\begin{rmk}
    Again we remark that to the best of the author's knowledge, this is a new class of ring. Examples of $\theta$-complete rings include fields, local rings (with mild conditions), and others (see Section~\ref{sec:theta-complete ring}).
\end{rmk}

Note that, if $\mathcal{A}(R)^* \neq \emptyset$ then
\[
    \mathcal{B}_1(R) \subset \mathcal{B}_3 (R) \subset \mathcal{B}_5 (R) \subset \cdots \quad \text{and} \quad \mathcal{B}_2(R) \subset \mathcal{B}_4 (R) \subset \mathcal{B}_6 (R) \subset \cdots.
\]

\begin{defn}
    \normalfont
    If there exists a positive integer $k \in \mathbb{N}$ such that $\mathcal{C}_{\text{even}}(R) = \mathcal{B}_{2k}(R)$, then we say that $R$ has \emph{finite} $\mathcal{C}$-length. The minimal such $k$ for which this equality holds is called the \emph{$\mathcal{C}$-length} of $R$.
\end{defn}

\begin{thm}[Unitriangular Factorization]\label{thm:unitriangular decomposition}
    \normalfont
    Let $\Phi_\rho$ be a twisted root system of type ${}^2 A_n \ (n \geq 2), {}^2 D_n \ (n \geq 4), {}^2E_6$ or ${}^3 D_4$.
    Let $R$ be a commutative ring with unity, and suppose there exists a ring automorphism $\theta: R \longrightarrow R$ of order $2$ if $\Phi_\rho \not\sim {}^3 D_4$, or of order $3$ if $\Phi_\rho \sim {}^3 D_4$. 
    Assume that:
    \begin{enumerate}[(a)]
        \item Both $R$ and $R_\theta$ satisfy the $(SR_1)$ condition if $\Phi_\rho \sim {}^2 A_{2n+1}$ ($n \geq 1$), ${}^2 D_n$ ($n \geq 4$), ${}^2 E_6$, or ${}^3 D_4$;
        \item $R$ is $\theta$-complete with $\mathcal{C}$-length $k$ and satisfies the $(SSR_1)$ condition if $\Phi_\rho \sim {}^2 A_{2}$;
        \item $R$ is $\theta$-complete with $\mathcal{C}$-length $k$ and satisfies both the $(SR_1)$ and $(SSR_1)$ conditions if $\Phi_\rho \sim {}^2 A_{2n}$ ($n \geq 2$).
    \end{enumerate}
    Then the group $E'_\sigma(\Phi, R)$ admits the unitriangular factorization:
    \[
        E'_\sigma(\Phi, R) = \begin{cases}
            \left( U_\sigma(\Phi, R) \, U^{-}_\sigma(\Phi, R) \right)^{k+1} \, U_\sigma(\Phi, R) & \text{if } \Phi_\rho \sim {}^2 A_{2n} \ (n \geq 1); \\
            \left( U_\sigma(\Phi, R) \, U^{-}_\sigma(\Phi, R) \right)^2 & \text{otherwise}.
        \end{cases}
    \]
\end{thm}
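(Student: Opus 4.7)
The plan is to derive the unitriangular factorization from the triangular decomposition of Theorem~\ref{thm:triangular decomposition}. Writing the triangular decomposition as
\[
E'_\sigma(\Phi, R) = H'_\sigma(\Phi, R) \, U_\sigma(\Phi, R) \, U^{-}_\sigma(\Phi, R) \, U_\sigma(\Phi, R),
\]
and using that $H'_\sigma$ normalises both $U_\sigma$ and $U^{-}_\sigma$, the task reduces to expressing every element of $H'_\sigma$ as a bounded alternating product in $U_\sigma$ and $U^{-}_\sigma$; once an inclusion $H'_\sigma \subseteq (U_\sigma U^{-}_\sigma)^m$ is available, the normality of the unipotent subgroups under $H'_\sigma$ and absorption of adjacent like factors yield the claimed $(U_\sigma U^{-}_\sigma)^{m+1} U_\sigma$ (respectively $(U_\sigma U^{-}_\sigma)^2$) form.

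For the non-${}^2 A_{2n}$ cases, the argument follows the template of \cite{SSV1}. The group $H'_\sigma$ is generated by Weyl-torus elements $h_\alpha(u)$ with $u \in R^*$ or $u \in R_\theta^*$, each of which expands through the identities
\[
h_\alpha(u) = n_\alpha(u) \, n_\alpha(1)^{-1}, \qquad n_\alpha(t) = x_\alpha(t) \, x_{-\alpha}(-t^{-1}) \, x_\alpha(t),
\]
into a short alternating $U_\sigma / U^{-}_\sigma$ product. Using the $(SR_1)$ hypothesis on both $R$ and $R_\theta$ to clear leading coordinates, together with the normality of the root subgroups under $H'_\sigma$ to shuffle factors, the standard reductions collapse the product to $(U_\sigma U^{-}_\sigma)^2$, mirroring the untwisted argument.

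The heart of the proof is the ${}^2 A_{2n}$ case. Here $\Phi_\rho$ is of type $BC_n$, and a short-root torus element $h_\alpha(u)$ is defined only for $u \in \mathcal{B}_1(R)$. The $\theta$-completeness hypothesis with $\mathcal{C}$-length $k$ is exactly what is needed: every unit $u \in R^*$ factors as $u = u_1 u_2 \cdots u_{2k}$ with each $u_i \in \mathcal{B}_1(R)$, and multiplicativity $h_\alpha(u_1) h_\alpha(u_2) = h_\alpha(u_1 u_2)$ lets us decompose an arbitrary short-root torus element as $h_\alpha(u) = h_\alpha(u_1) h_\alpha(u_2) \cdots h_\alpha(u_{2k})$. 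Grouping into $k$ consecutive pairs, each product $h_\alpha(u_{2i-1}) h_\alpha(u_{2i}) = h_\alpha(u_{2i-1} u_{2i})$ with $u_{2i-1} u_{2i} \in \mathcal{B}_2(R)$ can be expressed, via a tailored identity based on the Chevalley relations in $BC_n$, inside a single $U_\sigma U^{-}_\sigma$ coset. Concatenating the $k$ pairs yields $H'_\sigma \subseteq (U_\sigma U^{-}_\sigma)^k$, and the $(SSR_1)$ assumption (together with $(SR_1)$ for $n \geq 2$) handles the interaction between short- and long-root contributions to $H'_\sigma$ and ensures that the triangular decomposition of Theorem~\ref{thm:triangular decomposition} is imported correctly.

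The principal obstacle is this last step: producing an explicit presentation of each $h_\alpha(u_{2i-1} u_{2i})$ (for $u_{2i-1} u_{2i} \in \mathcal{B}_2(R)$) as a single $U_\sigma U^{-}_\sigma$ expression, and verifying that the concatenation of $k$ such pieces telescopes to exactly $(U_\sigma U^{-}_\sigma)^k$ without introducing spurious cosets. This requires delicate manipulation of the twisted commutator relations in type $BC_n$, where the short-root subgroups have a non-abelian commutator structure; it is precisely here that the new notions of $(SSR_1)$ and $\theta$-completeness, calibrated to the arithmetic of the $R_\theta$-module structure on $R$, become indispensable.
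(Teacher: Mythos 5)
Your reduction has a genuine gap at its central step, and the length bookkeeping does not produce the stated factorizations. First, the claim that each $h_{[\alpha]}(u_{2i-1}u_{2i})$ with $u_{2i-1}u_{2i}\in\mathcal{B}_2(R)$ can be written ``inside a single $U_\sigma U^{-}_\sigma$ coset'' is impossible: if a torus element $h$ equals $uv$ with $u\in U_\sigma$, $v\in U^{-}_\sigma$, then $v=u^{-1}h\in U^{-}_\sigma\cap B_\sigma^{+}=\{1\}$, so $h=1$ (concretely, in $SU(3,R)$ a product $x_{+}(t,u)x_{-}(t',u')$ is diagonal only when both factors are trivial). Consequently your intermediate inclusion $H'_\sigma\subseteq(U_\sigma U^{-}_\sigma)^k$ is false already for $k=1$ (fields have $\mathcal{C}$-length $1$ but nontrivial $H'_\sigma$). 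The correct statement, which the paper proves by an explicit inductive matrix construction in $SU(3,R)$ (Theorem~\ref{thm:T is as unitriangular}), is $T_\sigma(3,R)\subseteq(U^{+}_\sigma U^{-}_\sigma)^{k+1}$, and even then the naive combination with the Gauss decomposition $E'_\sigma=H'_\sigma U_\sigma U^{-}_\sigma U_\sigma$ gives $(U_\sigma U^{-}_\sigma)^{k+2}U_\sigma$, one pair too long; reaching the stated $(U_\sigma U^{-}_\sigma)^{k+1}U_\sigma$ requires the sharper information that the torus lies in a product beginning with $U^{-}_\sigma$ and ending with $U^{+}_\sigma$, which is what makes Corollary~\ref{cor:unitriangular for SU(3,R)} work. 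The same bookkeeping problem sinks your ``otherwise'' case: from $E'_\sigma=H'_\sigma U_\sigma U^{-}_\sigma U_\sigma$ and any inclusion $H'_\sigma\subseteq(U_\sigma U^{-}_\sigma)^m$ with $m\geq 1$ one gets at least $(U_\sigma U^{-}_\sigma)^{m+1}U_\sigma$, never $(U_\sigma U^{-}_\sigma)^2$; the length-$4$ result in \cite{SSV1} is not obtained by expanding the torus inside the Gauss decomposition but by a direct rank-one computation.

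This also points to the structural difference from the paper's argument, which you do not supply a substitute for: the paper proves the rank-one base cases (Lemma~\ref{lemma:base case for unitriangular}, namely $(SR_1)$ giving length $4$ for classes of type $A_1,A_1^2,A_1^3$, and Corollary~\ref{cor:unitriangular for SU(3,R)} giving length $2k+3$ for the $A_2$ class) and then propagates them to arbitrary rank by Tavgen's rank-reduction theorem (Theorem~\ref{thm:rank reduction theorem}), which rests on the Levi decomposition of the end parabolic subsets and the symmetric-generating-set lemma; ``normality of the root subgroups under $H'_\sigma$ and shuffling factors'' does not replace this mechanism. Finally, the part you yourself flag as ``the principal obstacle'' --- an explicit presentation of the relevant torus elements as short alternating unipotent products compatible with the $\mathcal{C}$-length --- is precisely the content of Theorem~\ref{thm:T is as unitriangular}, so the proposal leaves the heart of the proof unproven and, as formulated, aims at an inclusion that cannot hold.
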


\begin{rmk}
    We note that this result was previously proven by A. Smolensky~\cite{AS} for the cases of finite fields and the field of complex numbers. In the present paper, however, we establish it in significantly more general settings.
\end{rmk}

%%%%%%%%%%%%%%%%%%%%%%%%%%%%%%%%%%%%%%%%%%%%%%%%%%%%%%%%%%%%%%
%Section - Chevalley and Twisted Chevalley Groups
%%%%%%%%%%%%%%%%%%%%%%%%%%%%%%%%%%%%%%%%%%%%%%%%%%%%%%%%%%%%%%

\section{Chevalley and Twisted Chevalley Groups}

In this section, we provide a formal definitions of Chevalley and twisted Chevalley groups. 
For a comprehensive treatment of these topics, we refer the reader to \cite{EA1, RC, SG&DM1, RS, RSTCG}. 
We primarily adhere to the notation introduced in \cite{SG&DM1}. 

%%%%%%%%%%%%%%%%%%%%%%%%%%%%%%%%%%%%%%%%%%%%%%%%%%%%%%%%%%%%%%

\subsection{Chevalley Groups}

It is well known that semisimple algebraic groups over $\mathbb{C}$ (or any algebraically closed field of characteristic zero) are classified by their root systems, which determine their type, and by the lattice between the root and (fundamental) weight lattices, which determines their isogeny class.

Let $\Phi$ be a (reduced) root system, and let $\Lambda_{\text{ad}}$ and $\Lambda_{\text{sc}}$ denote the corresponding root lattice and (fundamental) weight lattice, respectively. Consider a lattice $\Lambda_{\pi}$ such that $\Lambda_{\text{ad}} \subseteq \Lambda_{\pi} \subseteq \Lambda_{\text{sc}}$. The unique (up to isomorphism) semisimple linear algebraic group over $\mathbb{C}$ associated with the pair $(\Phi, \Lambda_{\pi})$ is denoted by $G_{\pi}(\Phi, \mathbb{C})$.

Moreover, this group is defined over $\mathbb{Z}$, which allows us to define the $R$-split group $G_{\pi}(\Phi, R)$ for any commutative ring $R$ with unity. This construction yields what is known as the \textit{Chevalley group} of type $\Phi$ over $R$. When $\Lambda_{\pi} = \Lambda_{\text{ad}}$, the group is referred to as the \textit{adjoint Chevalley group}, and when $\Lambda_{\pi} = \Lambda_{\text{sc}}$, it is called the \textit{simply connected} (or \textit{universal}) Chevalley group.

For each root $\alpha \in \Phi$, there is an associated unipotent subgroup $U_{\alpha} = \{ x_{\alpha}(t) \mid t \in R \} \subseteq G_{\pi}(\Phi, R),$ which is isomorphic to the additive group $(R, +)$. The group generated by all elements $x_{\alpha}(t)$, for $\alpha \in \Phi$ and $t \in R$, is called the \textit{elementary Chevalley group}, and is denoted by $E_{\pi}(\Phi, R)$. 

Let $T = T_{\pi}(\Phi, R)$ be a split maximal torus of $G_{\pi}(\Phi, R)$. Let $B = B_{\pi}(\Phi, R)$ and $B^{-} = B^{-}_{\pi}(\Phi, R)$ be a pair of opposite Borel subgroups containing $T$. Denote by $U = U_{\pi}(\Phi, R)$ (resp., $U^{-} = U^{-}_{\pi}(\Phi, R)$) the unipotent radicals of $B$ (resp., $B^{-}$). Recall that
\[
    U = \langle x_{\alpha}(t) \mid \alpha \in \Phi^+,\, t \in R \rangle \quad \text{and} \quad U^{-} = \langle x_{\alpha}(t) \mid \alpha \in \Phi^-,\, t \in R \rangle.
\]
Moreover, we have $B = T \ltimes U$ and $B^{-} = T \ltimes U^{-}$.

For any $t \in R^*$, define
\[
    w_\alpha(t) = x_{\alpha}(t) \, x_{-\alpha}(-t^{-1}) \, x_{\alpha}(t) \quad \text{and} \quad h_{\alpha}(t) = w_\alpha(t) \, w_\alpha(1)^{-1}.
\]
Now, set
\[
    H_{\pi}(\Phi, R) = \langle h_{\alpha}(t) \mid \alpha \in \Phi,\, t \in R^* \rangle 
    \quad \text{and} \quad 
    N_{\pi} (\Phi, R) = \langle w_{\alpha}(t) \mid \alpha \in \Phi,\, t \in R^* \rangle.
\]
Then $H_{\pi}(\Phi, R)$ is a normal subgroup of $N_{\pi}(\Phi, R)$, and the quotient satisfies 
\[
    N_{\pi}(\Phi, R) / H_{\pi}(\Phi, R) \cong W(\Phi),
\]
where $W(\Phi)$ denotes the Weyl group of the root system $\Phi$.

It is known that $H_{\pi}(\Phi, R) = T_{\pi}(\Phi, R) \cap E_{\pi}(\Phi, R)$. Moreover, in the case where the group is simply connected, we have $H_{\text{sc}}(\Phi, R) = T_{\text{sc}}(\Phi, R)$.

%%%%%%%%%%%%%%%%%%%%%%%%%%%%%%%%%%%%%%%%%%%%%%%%%%%%%%%%%%%%%%

\subsection{Twisted Chevalley Groups}

Let $R$ be a commutative ring with unity, and let $\Phi$ be an irreducible root system of type $A_n \ (n \geq 2)$, $D_n \ (n \geq 4)$, or $E_6$. Let $G = G_\pi(\Phi, R)$ (respectively, $E = E_\pi(\Phi, R)$) denote the Chevalley group (respectively, the elementary Chevalley group) over $R$ of type $\Phi$.

Fix a simple system $\Delta$ in $\Phi$. Let $\rho$ be an angle-preserving permutation of $\Delta$ (such permutations correspond to automorphisms of the associated Dynkin diagram). This permutation induces a group automorphism of $G$, called a \emph{graph automorphism}, which we also denote by $\rho$. Assume that $\rho$ is nontrivial. Then $\rho$ has order either $2$ or $3$, with the latter occurring only when $\Phi$ is of type $D_4$.

A ring automorphism $\theta: R \to R$ induces a group automorphism of $G$, also denoted by $\theta$, which is referred to as a \emph{ring automorphism} of $G$. Suppose that $R$ admits a ring automorphism $\theta$ whose order coincides with that of the graph automorphism $\rho$. 

Define $\sigma = \theta \circ \rho$; then $\sigma$ is an automorphism of $G$ of order $o(\rho) = o(\theta)$. The subgroup of $G$ fixed pointwise by $\sigma$, $G_\sigma = G_{\pi, \sigma}(\Phi, R) = \{ g \in G \mid \sigma(g) = g \}$ is called the \emph{twisted Chevalley group} over $R$ of type ${}^n X$, where $X$ denotes the type of $\Phi$ and $n$ is the order of $\sigma$.

For any subset $A \subset G$, define $A_\sigma := A \cap G_\sigma$. In particular, define $E_\sigma = E_{\pi, \sigma} (\Phi, R) := E(\Phi, R) \cap G_{\pi, \sigma} (\Phi, R)$.
Analogously, we define $U_\sigma = U_{\pi, \sigma} (\Phi, R)$, $U_\sigma^{-} = U_{\pi, \sigma}^{-}(\Phi, R)$, $T_\sigma = T_{\pi, \sigma}(\Phi, R)$, $H_\sigma = H_{\pi, \sigma}(\Phi, R)$, $N_\sigma = N_{\pi, \sigma}(\Phi, R)$ and so on.

Let $E'_\sigma = E'_{\pi, \sigma} (\Phi, R)$ denote the subgroup of $G_\sigma$ generated by $U_{\pi, \sigma}(\Phi, R)$ and $U^-_{\pi, \sigma}(\Phi, R)$. This group is referred as the \emph{elementary twisted Chevalley group} over $R$ of type ${}^n X$. 
We define $H'_\sigma = H'_{\pi, \sigma}(\Phi, R) := H(\Phi, R) \cap E'_{\pi, \sigma}(\Phi, R)$ and $N'_\sigma = N'_{\pi, \sigma}(\Phi, R) := N(\Phi, R) \cap E'_{\pi, \sigma}(\Phi, R)$.
For completeness, we also define $G'_\sigma = G'_{\pi, \sigma} (\Phi, R) = T_{\pi, \sigma} (\Phi, R) E'_{\pi, \sigma} (\Phi, R)$. 

If $G_\pi(\Phi, R)$ is of type $X$ and $\sigma$ is of order $n$, we say $G_{\pi, \sigma}(\Phi, R)$ is of type ${}^n X$. 
We write $G_\pi(\Phi, R) \sim X$ and $G_{\pi, \sigma}(\Phi, R) \sim {}^nX$. 
We use a similar notation for $E_{\pi}(\Phi, R), E_{\pi, \sigma}(\Phi, R)$ and $E'_{\pi, \sigma}(\Phi, R)$. 

\begin{conv}
    Since we will be working exclusively in the simply connected case, that is, when $\Lambda_\pi = \Lambda_{\mathrm{sc}}$, we shall omit the subscript $\pi$ from the notation $G_{\pi, \sigma}(\Phi, R)$ and write simply $G_\sigma(\Phi, R)$. This convention extends to all subgroups introduced above.
\end{conv}

%%%%%%%%%%%%%%%%%%%%%%%%%%%%%%%%%%%%%%%%%%%%%%%%%%%%%%%%%%%%%%

\subsection{The subgroup \texorpdfstring{$E'_{\sigma} (\Phi, R)$}{E'(R)}}

We continue to use the notation introduced in the preceding subsection. For a more comprehensive treatment of the material discussed here, the reader is referred to~\cite{SG&DM1}.

%%%%%%%%%%%%%%%%%%%%%%%%%%%%%%%%%%%%%%%%%%%%%%%%%%%%%%%%%%%%%%

\subsubsection{}

We begin by defining the \emph{twisted root system}. As before, let $\rho$ denote an angle-preserving permutation of the simple system $\Delta$. This permutation naturally extends to a permutation of the full root system $\Phi$. For each $\alpha \in \Phi$, define
\[
    \hat{\alpha} := \frac{1}{o(\rho)} \sum_{i=1}^{o(\rho)} \rho^i(\alpha),
\]
where $o(\rho)$ denotes the order of the automorphism $\rho$.

We introduce an equivalence relation $\equiv$ on $\Phi$ by declaring $\alpha \equiv \beta$ if and only if $\hat{\alpha}$ is a positive scalar multiple of $\hat{\beta}$.
Let $\Phi_\rho$ denote the set of equivalence classes in $\Phi$ under this relation. The set $\Phi_\rho$ itself forms a root system, called \textit{twisted root system}. If $\Phi$ is of type $X$ and $o(\rho) = n$, we denote the resulting twisted root system by $\Phi_\rho \sim {}^n X$.

Given an equivalence class $[\alpha] \in \Phi_\rho$, the roots in $[\alpha]$ form a positive system of roots of type either $A_1$, $A_1^2$, $A_1^3$, or $A_2$ (see Lemma~2.1 of \cite{SG&DM1}). In this case, we write $[\alpha] \sim A_1$, $A_1^2$, $A_1^3$, or $A_2$, respectively.

%%%%%%%%%%%%%%%%%%%%%%%%%%%%%%%%%%%%%%%%%%%%%%%%%%%%%%%%%%%%%%

\subsubsection{}

We denote $\bar{\alpha} = \rho(\alpha)$, $\bar{\bar{\alpha}} = \rho^2(\alpha)$, $\bar{t} = \theta(t)$, and $\bar{\bar{t}} = \theta^2(t)$. Let $R_\theta = \{ t \in R \mid t = \bar{t} \}$ and $R^{-}_\theta = \{ t \in R \mid t = - \bar{t} \}$. We now define certain special elements of $E'_\sigma(\Phi, R)$ as follows:

\begin{enumerate}
    \item If $[\alpha] \sim A_1$ (that is, $[\alpha]=\{ \alpha \}$), then define $x_{[\alpha]}(t) = x_\alpha (t), t \in R_\theta.$ In this case, $x_{[\alpha]}(t)x_{[\alpha]}(u)=x_{[\alpha]}(t+u)$ for every $t,u \in R_\theta$.

    \item If $[\alpha] \sim A_1^2$ (that is, $[\alpha]=\{ \alpha, \bar{\alpha} \}$), then define $x_{[\alpha]}(t) = x_\alpha (t) \ x_{\bar{\alpha}} (\bar{t}), t \in R$. In this case, $x_{[\alpha]}(t)x_{[\alpha]}(u)=x_{[\alpha]}(t+u)$ for every $t,u \in R$. 
    
    \item If ${[\alpha]} \sim A_1^3$ (that is, $[\alpha]=\{ \alpha, \bar{\alpha}, \bar{\bar{\alpha}} \}$), then define $x_{[\alpha]}(t) = x_\alpha (t) \ x_{\bar{\alpha}} (\bar{t}) \ x_{\bar{\bar{\alpha}}} (\bar{\bar{t}}), \ t \in R.$ 
    In this case, $x_{[\alpha]}(t)x_{[\alpha]}(u)=x_{[\alpha]}(t+u)$ for every $t,u \in R$. 
    
    \item If ${[\alpha]} \sim A_2$ with $\alpha \neq \bar{\alpha}$ (that is, ${[\alpha]}=\{ \alpha, \bar{\alpha}, \alpha + \bar{\alpha} \}$), then define $$x_{[\alpha]}(t,u) = x_\alpha (t) x_{\bar{\alpha}} (\bar{t}) x_{\alpha + \bar{\alpha}}(N_{\bar{\alpha}, \alpha} u), \hspace{1mm} \text{where } t,u \in R \text{ such that } t \bar{t} = u + \bar{u}.$$ 
    In this case, $x_{[\alpha]}(t,u)x_{[\alpha]}(t',u')=x_{[\alpha]}(t+t', u+u' + \bar{t} t')$ for every $t,u,t',u' \in R$ such that $t \bar{t} = u + \bar{u}$ and $t' \bar{t'} = u' + \bar{u'}$.
\end{enumerate}

Define $\mathcal{A}(R) := \{ (t,u) \mid t, u \in R \text{ such that } t \bar{t} = u + \bar{u} \}$. Note that, for $[\alpha] \sim A_2$ we define $x_{[\alpha]} (t,u)$ only in the case of $(t,u) \in \mathcal{A}(R)$. The product of $x_{[\alpha]}(t,u)$ and $x_{[\alpha]}(t',u')$ suggest the operation on $\mathcal{A}(R)$ as follows: let $(t,u), (t',u') \in \mathcal{A}(R)$, then define an operation $\oplus$ on $\mathcal{A}(R)$ by $(t,u) \oplus (t',u') = (t + t', u+u'+\bar{t}t').$ With this operation $\mathcal{A}(R)$ becomes a group with $(0,0)$ as an identity and $(-t, \bar{u})$ as an inverse of $(t,u)$. From this we can say that $(x_{[\alpha]}(t,u))^{-1} = x_{[\alpha]} (-t, \bar{u})$. Further, we can define an action of the monoid $(R, \times)$ on the set $\mathcal{A}(R)$ by $$r \cdot (t,u) = (rt, r \bar{r} u)$$ for any $r \in R$ and $(t,u) \in \mathcal{A}(R)$. (For more information, please refer to \cite{EA1}.)

For $[\alpha] \in \Phi_\rho$, we write 
\begin{align*}
    R_{[\alpha]} = \begin{cases}
        R_\theta & \text{if } [\alpha] \sim A_1, \\
        R & \text{if } [\alpha] \sim A_1^2 \text{ or } A_1^3, \\
        \mathcal{A}(R) & \text{if } [\alpha] \sim A_2.
    \end{cases}
\end{align*}
If $[\alpha] \sim A_2$ then $t \in R_{[\alpha]}$ means that $t$ is a pair $(t_1, t_2)$ such that $(t_1, t_2) \in \mathcal{A}(R)$. Additionally, for $r \in R$ and $t \in R_{[\alpha]}$, the notation $r \cdot t$ means $rt$ if $[\alpha] \sim A_1, A_1^2, A_1^3$, and it means $(rt_1, r \bar{r} t_2)$ if $[\alpha] \sim A_2$.

\begin{lemma}
    The group $E'_\sigma (\Phi, R)$ is generated by $x_{[\alpha]}(t)$ for all $[\alpha] \in \Phi_\rho$ and $t \in R_{[\alpha]}$.
\end{lemma}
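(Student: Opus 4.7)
Since $E'_\sigma(\Phi,R)$ is defined as the subgroup generated by $U_\sigma(\Phi,R)$ and $U^-_\sigma(\Phi,R)$, it suffices to show that each of these fixed-point subgroups is generated by the $x_{[\alpha]}(t)$ attached to positive (resp.\ negative) equivalence classes. I would focus on $U_\sigma$; the argument for $U^-_\sigma$ is identical after swapping $\Phi^+$ and $\Phi^-$. The easy inclusion is that every $x_{[\alpha]}(t)$ with $[\alpha]\subset\Phi^+$ lies in $U_\sigma$: the product defining $x_{[\alpha]}(t)$ is built precisely so that $\sigma$ acts as the identity on it, which is a direct check in each of the four cases $[\alpha]\sim A_1,\,A_1^2,\,A_1^3,\,A_2$ using $\sigma(x_\beta(s))=x_{\rho(\beta)}(\theta(s))$ together with (in the $A_2$ case) the Chevalley commutator formula for $[x_\alpha(\cdot),x_{\bar\alpha}(\cdot)]$.

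For the reverse inclusion, the main tool is the unique product decomposition of $U$. I would choose a total order on $\Phi^+$ compatible with the partition of $\Phi^+$ into $\rho$-orbits, so that each class $[\alpha]$ occupies a consecutive block. Every $u\in U$ then has a unique expression $u=\prod_{\alpha\in\Phi^+}x_\alpha(t_\alpha)$, which regroups as $u=\prod_{[\alpha]\subset\Phi^+}u_{[\alpha]}$, where each $u_{[\alpha]}$ is the corresponding consecutive subproduct. Since $\sigma$ permutes the roots within each class and (as one checks from the root system) sums of roots drawn from two different classes never land back in $[\alpha]$, one verifies that $\sigma(u_{[\alpha]})$ again lies in the ordered product over $[\alpha]$. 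By uniqueness of the factorization, $\sigma(u)=u$ is equivalent to $\sigma(u_{[\alpha]})=u_{[\alpha]}$ for every class. This reduces the lemma to a finite case check, one per orbit type.

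The final step is the class-by-class computation. For $[\alpha]\sim A_1,\,A_1^2,\,A_1^3$ the subgroup $U_{[\alpha]}$ is abelian and isomorphic to $R_\theta$ or $R^{o(\rho)}$; writing a general element in the normal form and imposing $\sigma$-invariance forces precisely the parametrization in the definition of $x_{[\alpha]}(t)$. The nontrivial case is $[\alpha]\sim A_2$: I would write an arbitrary element of $U_{[\alpha]}$ as $x_\alpha(t)\,x_{\bar\alpha}(s)\,x_{\alpha+\bar\alpha}(v)$, apply $\sigma$, and use the commutator $[x_\alpha(t),x_{\bar\alpha}(\bar t)]=x_{\alpha+\bar\alpha}(N_{\alpha,\bar\alpha}\,t\bar t)$ to bring $\sigma$ of this element back into the same normal form. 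Comparing coordinates forces $s=\bar t$ and $v-\bar v = N_{\bar\alpha,\alpha}\,t\bar t$, which, after setting $v=N_{\bar\alpha,\alpha}u$, is precisely the Tits--Steinberg condition $t\bar t=u+\bar u$ defining $\mathcal{A}(R)$. I expect this $A_2$ case to be the main technical obstacle, as one must track the structure constants $N_{\bar\alpha,\alpha}$ and the signs from the chosen Chevalley basis carefully; the other three cases are essentially bookkeeping. Combining the factorization argument with the four case analyses yields $U_\sigma = \langle x_{[\alpha]}(t) : [\alpha]\subset\Phi^+,\,t\in R_{[\alpha]}\rangle$, and hence the claim.
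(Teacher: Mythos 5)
The paper does not actually prove this lemma: it is stated as a known structural fact, with the reader referred to \cite{SG&DM1} and \cite{EA1} (ultimately Steinberg's analysis of the fixed points of $\sigma$ on $U$). Your argument is essentially that standard proof: reduce to $U_\sigma$ and $U^-_\sigma$ via the definition of $E'_\sigma$, use the unique ordered-product decomposition of $U$ with each class $[\alpha]$ occupying a consecutive block (each class is a closed unipotent set, so $U_{[\alpha]}$ is a subgroup preserved by $\sigma$), conclude from uniqueness that $\sigma$-invariance is checked blockwise, and then do the four orbit-type computations. This is correct in structure and matches the cited source.

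One point to repair in the $A_2$ case: as written, your comparison of coordinates gives $s=\bar t$ and $v-\bar v=N_{\bar\alpha,\alpha}\,t\bar t$, and substituting $v=N_{\bar\alpha,\alpha}u$ into \emph{that} relation would yield $t\bar t=u-\bar u$, not the condition $t\bar t=u+\bar u$ defining $\mathcal{A}(R)$. The missing ingredient is the sign with which the graph automorphism acts on the root subgroup $U_{\alpha+\bar\alpha}$: in Steinberg's normalization $\rho$ sends $x_{\alpha+\bar\alpha}(v)$ to $x_{\alpha+\bar\alpha}(-v)$ (the sign $\epsilon_{\alpha+\bar\alpha}=-1$ cannot be normalized away for $A_2$), so $\sigma$-invariance of the block produces $v+\bar v$ rather than $v-\bar v$, which is exactly what gives $t\bar t=u+\bar u$. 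With that sign incorporated (and consistency between $N_{\alpha,\bar\alpha}$ and $N_{\bar\alpha,\alpha}=-N_{\alpha,\bar\alpha}$ tracked once and for all), your outline closes completely.
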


%%%%%%%%%%%%%%%%%%%%%%%%%%%%%%%%%%%%%%%%%%%%%%%%%%%%%%%%%%%%%%

\subsubsection{}

Define $R^*= \{r \in R \mid \exists s \in R \text{ such that } rs = 1\}$, $R_\theta^* = R_\theta \cap R^*$ and $\mathcal{A}(R)^* := \{ (t,u) \in \mathcal{A}(R) \mid u \in R^* \}.$ 
For given $[\alpha] \in \Phi_\rho$, we write 
\[
    R_{[\alpha]}^* = \begin{cases}
        R^*_\theta & \text{if } [\alpha] \sim A_1, \\
        R^* & \text{if } [\alpha] \sim A_1^2 \text{ or } A_1^3, \\
        \mathcal{A}(R)^* & \text{if } [\alpha] \sim A_2;
    \end{cases} \quad \text{and} \quad     R_{[\alpha]}^{\star} = \begin{cases}
        R^*_\theta & \text{if } [\alpha] \sim A_1, \\
        R^* & \text{if } [\alpha] \sim A_1^2, A_1^3 \text{ or } A_2.
    \end{cases}
\]

With these notations in place, we now define certain special elements of $N_\sigma (\Phi, R)$ and $H_\sigma (\Phi, R)$:
\begin{enumerate}
    \item[$\textbf{(W1)}$] If $[\alpha] \sim A_1$, then define $w_{[\alpha]}(t) := x_{[\alpha]}(t) x_{-[\alpha]}(-t^{-1}) x_{[\alpha]}(t) = w_\alpha (t), t \in R_\theta^*.$
    \item[$\textbf{(W2)}$] If $[\alpha] \sim A_1^2$, then define $w_{[\alpha]}(t) := x_{[\alpha]}(t) x_{-[\alpha]}(-t^{-1}) x_{[\alpha]}(t) = w_{\alpha} (t) w_{\bar{\alpha}} (\bar{t}), t \in R^*.$
    \item[$\textbf{(W3)}$] If $[\alpha] \sim A_1^3$, then define $w_{[\alpha]}(t) := x_{[\alpha]}(t) x_{-[\alpha]}(-t^{-1}) x_{[\alpha]}(t) = w_{\alpha} (t) w_{\bar{\alpha}} (\bar{t}) w_{\bar{\bar{\alpha}}} (\bar{\bar{t}}), t \in R^*.$
    \item[$\textbf{(W4)}$] If $[\alpha] \sim A_2$, then define $w_{[\alpha]}(t,u) := x_{[\alpha]}(t,u) x_{-[\alpha]}( -\bar{u}^{-1} \cdot (t,u)) x_{[\alpha]}( u \bar{u}^{-1} \cdot (t,u)) = x_{[\alpha]}(t,u) x_{-[\alpha]}(- (\bar{u}^{-1}) t, (\bar{u}^{-1})) x_{[\alpha]}(u \bar{u}^{-1} t, u),$ where $(t,u) \in \mathcal{A}(R)^*$. 
    \item[\textbf{(W4$'$)}] If $[\alpha] \sim A_2$ such that $\alpha \neq \bar{\alpha}$, then define $w_{[\alpha]} (t) := w_\alpha (\bar{t}) w_{\bar{\alpha}}(1) w_{\alpha} (t), t \in R^*$. \\
    
    \item[$\textbf{(H1)}$] If $[\alpha] \sim A_1$, then define $h_{[\alpha]}(t) := w_{[\alpha]}(t) w_{[\alpha]}(-1) = h_\alpha (t), \ t \in R^*_{\theta}$.
    \item[$\textbf{(H2)}$] If $[\alpha] \sim A_1^2$, then define $h_{[\alpha]}(t) = w_{[\alpha]}(t) w_{[\alpha]}(-1) = h_\alpha (t) h_{\bar{\alpha}}(\bar{t}), \ t \in R^*$.
    \item [$\textbf{(H3)}$] If $[\alpha] \sim A_1^3$, then define $h_{[\alpha]}(t) = w_{[\alpha]}(t) w_{[\alpha]}(-1) = h_\alpha (t) h_{\bar{\alpha}}(\bar{t}) h_{\bar{\bar{\alpha}}}(\bar{\bar{t}}), \ t \in R^*$.
    \item[$\textbf{(H4)}$] If $[\alpha] \sim A_2$, then define $h_{[\alpha]}((t,u),(t',u')) = w_{[\alpha]}(t,u) w_{[\alpha]}(t',u')$, where $(t,u), (t',u') \in \mathcal{A}(R)^*$. 
    \item[\textbf{(H4$'$)}] If $[\alpha] \sim A_2$ such that $\alpha \neq \bar{\alpha}$, then define $h_{[\alpha]} (t) := h_\alpha (t) h_{\bar{\alpha}}(\bar{t}), t \in R^*$.
\end{enumerate}

\begin{rmk}
    \normalfont
    \begin{enumerate}[(a)]
        \item Let $w_{[\alpha]}(t)$, for $t \in R_{[\alpha]}^*$, be as defined in $(\mathrm{W1}), (\mathrm{W2}), (\mathrm{W3})$, or $(\mathrm{W4})$. 
        Then we have $w_{[\alpha]}(t) \in N'_\sigma(\Phi, R) \subset N_\sigma(\Phi, R)$. 
        However, the element $w_{[\alpha]}(t)$ defined in $(\mathrm{W4}')$ does not, in general, belong to $N'_\sigma(\Phi, R)$.
        
        \item Similarly, let $h_{[\alpha]} = h_{[\alpha]}(t)$, for $t \in R_{[\alpha]}^*$, be as defined in $(\mathrm{H1}), (\mathrm{H2})$, or $(\mathrm{H3})$, and let $h_{[\alpha]} = h_{[\alpha]}(t_1, t_2)$, for $t_1, t_2 \in \mathcal{A}(R)$, be as defined in $(\mathrm{H4})$. 
        Then $h_{[\alpha]} \in H'_\sigma(\Phi, R) \subset H_\sigma(\Phi, R)$. 
        However, the element $h_{[\alpha]}(t)$ defined in $(\mathrm{H4}')$ does not, in general, belong to $H'_\sigma(\Phi, R)$.
    \end{enumerate}
\end{rmk}

Since $G$ is simply connected, we have $H(\Phi, R) = T(\Phi, R)$ for every $\Phi$. It then follows that $H_{\sigma}(\Phi, R) = T_{\sigma}(\Phi, R)$ for every $\Phi_\rho$.
Furthermore, by preceding remark, we also have $H'_{\sigma}(\Phi, R) = T_{\sigma}(\Phi, R)$ when $\Phi_\rho \sim {}^2 A_{2n+1}$ $(n \geq 1)$, ${}^2 D_n$ $(n \geq 4)$, ${}^2 E_6$, or ${}^2 G_2$. 
However, it is not known whether this equality holds in the case of ${}^2 A_{2n}$ $(n \geq 1)$. In Section~\ref{sec:theta-complete ring}, we will show that under a certain class of rings, this equality indeed holds in this case.

%%%%%%%%%%%%%%%%%%%%%%%%%%%%%%%%%%%%%%%%%%%%%%%%%%%%%%%%%%%%%%

\subsection{Closed Subsets of Twisted Root System}\label{subsec:parabolic}

Let $\Phi_\rho$ be a twisted root system of type ${}^2 A_n$ ($n \geq 2$), ${}^2 D_n$ ($n \geq 4$), ${}^2 E_6$, or ${}^3 D_4$.  
A subset \( S \subset \Phi_\rho \) is said to be \textbf{closed} if it satisfies the following conditions:
\begin{enumerate}
    \item For any $[\alpha], [\beta] \in S$ such that $[\alpha] + [\beta] \in \widetilde{\Phi}_\rho$, we have $[\alpha] + [\beta] \in S$.
  
    \item Additionally, if $\Phi_\rho \sim {}^2 A_{2n}$, then for any $[\alpha], [\beta] \in S$ of type $A_1^2$ such that $\frac{1}{2}([\alpha] + [\beta]) \in \Phi_\rho$, it follows that $\frac{1}{2}([\alpha] + [\beta]) \in S$.
\end{enumerate}

A subset $I \subset S$ of a closed set $S$ is called an \textbf{ideal} if the following conditions hold:
\begin{enumerate}
  \item For any $[\alpha] \in I$ and $[\beta] \in S$ such that $[\alpha] + [\beta] \in S$, we have $[\alpha] + [\beta] \in I$.

  \item Additionally, if $\Phi_\rho \sim {}^2 A_{2n}$, then for any $[\alpha] \in I, [\beta] \in S$ of type $A_1^2$, such that $\frac{1}{2}([\alpha] + [\beta]) \in S$, it follows that $\frac{1}{2}([\alpha] + [\beta]) \in I$.
\end{enumerate}

A closed subset $S$ of $\Phi_\rho$ is called:
\begin{enumerate}[(a)]
    \item \textbf{symmetric} (or \textbf{reductive}) if $[\alpha] \in S$ implies $-[\alpha] \in S$ (i.e., $S = -S$);
    \item \textbf{special} (or \textbf{unipotent}) if $[\alpha] \in S$ implies $-[\alpha] \notin S$ (i.e., $S \cap -S = \emptyset$).
\end{enumerate}

Any closed subset $S$ of $\Phi_\rho$ admits a unique decomposition into the disjoint union of:
\[
S^r = \{ [\alpha] \in S \mid -[\alpha] \in S \}, \quad 
S^u = \{ [\alpha] \in S \mid -[\alpha] \notin S \},
\]
called the \textbf{symmetric part} and the \textbf{special part} of $S$, respectively.
Both $S^r$ and $S^u$ are closed subsets of $\Phi_\rho$. Moreover, $S^r$ is a closed root subsystem of $\Phi_\rho$ and $S^u$ is an ideal in $S$.

Now, for a closed subset $S$ of $\Phi_\rho$, define $E'_{\sigma} (S, R)$ as the subgroup generated by all elementary root unipotent elements $x_{[\alpha]}(t)$ where $[\alpha] \in \Phi_\rho$ and $t \in R_{[\alpha]}$:
\[
    E'_{\sigma} (S, R) = \langle x_{[\alpha]} (t) \mid [\alpha] \in \Phi_\rho, t \in R_{[\alpha]} \rangle.
\]
A Levi decomposition asserts that the group $E'_{\sigma} (S, R)$ decomposes into semidirect product 
\[
    E'_\sigma (S, R) = E'_\sigma (S^r, R) \ltimes E'_\sigma (S^u, R)
\]
of its Levi subgroup $E'_\sigma (S^r, R)$ and its unipotent radical $E'_\sigma (S^u, R)$.

%%%%%%%%%%%%%%%%%%%%%%%%%%%%%%%%%%%%%%%%%%%%%%%%%%%%%%%%%%%%%%
%Section - The Special Unitary Group $SU(3, R)$
%%%%%%%%%%%%%%%%%%%%%%%%%%%%%%%%%%%%%%%%%%%%%%%%%%%%%%%%%%%%%%

\section{The Special Unitary Group \texorpdfstring{$SU(3, R)$}{SU(3,R)}}

Let $R$ be a commutative ring with unity, and let $\theta$ be an involution on $R$. Let $R_\theta$, $R^{-}_{\theta}$, $\mathcal{A}(R)$, and $\mathcal{A}(R)^*$ be as defined in the previous section. For a matrix $A = (a_{ij}) \in M(n, R)$, we denote by $\bar{A}$ the matrix $(\bar{a}_{ij})$.

Let $SL(3, R)$ denote the special linear group of degree $3$ over the ring $R$. For $t \in R$ and $1 \leq i \neq j \leq 3$, define
\[
    x_{i,j}(t) = I_3 + t E_{i,j},
\]
where $E_{i,j}$ denotes the $3 \times 3$ matrix with a $1$ in the $(i,j)$-th position and $0$ elsewhere. Let $E(3, R)$ be the subgroup of $SL(3, R)$ generated by all such elementary matrices $x_{i,j}(t)$. 
Let $T(3, R) \subset SL(3, R)$ denote the subgroup of $SL(3, R)$ consisting of all diagonal matrices, and define
\[
    H(3, R) = T(3, R) \cap E(3, R).
\]

Let us consider the automorphism \(\sigma\) of \(SL(3, R)\) defined by
\[
    \sigma(A) = J \left( \bar{A}^{\,t} \right)^{-1} J, \quad \text{where} \quad J = \begin{pmatrix}
        0 & 0 & -1 \\
        0 & 1 & 0 \\
        -1 & 0 & 0
    \end{pmatrix}.
\]
The subgroup of fixed points of \(SL(3, R)\) under the automorphism \(\sigma\) is 
\[
    SU(3, R) = \left\{ A \in SL(3, R) \mid A^t J \bar{A} = J \right\}.
\]

For $(t, u) \in \mathcal{A}(R)$, define 
\[
    x_{+}(t,u) = \begin{pmatrix}
        1 & t & u \\
        0 & 1 & \bar{t} \\
        0 & 0 & 1
    \end{pmatrix} \quad \text{and} \quad x_{-}(t,u) = \begin{pmatrix}
        1 & 0 & 0 \\
        \bar{t} & 1 & 0 \\
        u & t & 1
    \end{pmatrix}.
\]
It is straightforward to verify that both $x_{+}(t, u)$ and $x_{-}(t, u)$ lie in $SU(3, R)$. Define
\[
    U^{+}_{\sigma} (3, R) = \{ x_{+}(t, u) \mid (t,u) \in \mathcal{A}(R) \} \quad \text{and} \quad U^{-}_{\sigma} (3, R) = \{ x_{-}(t, u) \mid (t,u) \in \mathcal{A}(R) \}.
\]
Then $U^{+}_{\sigma} (3, R)$ (respectively, $U^{-}_{\sigma} (3, R)$) forms a subgroup of $SU(3, R)$ consisting of all upper (respectively, lower) triangular matrices. 
Let $E'_\sigma(3, R)$ denote the subgroup of $SU(3, R)$ generated by $U^{+}_{\sigma}(3, R)$ and $U^{-}_{\sigma}(3, R)$. 
Define $E_\sigma(3, R) = E(3, R) \cap SU(3, R)$, $T_\sigma(3, R) = T(3, R) \cap SU(3, R)$, $H_\sigma(3, R) = H(3, R) \cap SU(3, R)$ and $H'_\sigma(3, R) = H(3, R) \cap E'_\sigma(3, R)$. 
It is clear that $E'_\sigma(3, R) \subset E_\sigma(3, R)$ and $H'_\sigma(3, R) \subset H_\sigma(3, R)$. 

For every $r \in R^*$, define 
\[
    h(r) = \begin{pmatrix}
        r & 0 & 0 \\
        0 & \bar{r} r^{-1} & 0 \\
        0 & 0 & \bar{r}^{-1}
    \end{pmatrix}.
\]
Then $h(r) \in SU(3, R)$ and 
\[
    T_{\sigma} (3, R) = \{ h (r) \mid r \in R^* \}.
\]
Note that $T_\sigma (3, R)$ acts on $U^{+}_\sigma (3, R)$ and $U^{-}_\sigma (3, R)$ by conjugation. Specifically, for $r \in R^*$ and $(t, u) \in \mathcal{A}(R)$, we have 
\begin{align}
    h(r) \, x_{+}(t, u) \, h(r)^{-1} &= x_{+} \left( r^2 \bar{r}^{-1} t, \, r \bar{r} u \right), \label{H1} \tag{H1} \\
    h(r) \, x_{-}(t, u) \, h(r)^{-1} &= x_{-} \left( r \bar{r}^{-2} t, \, r^{-1} \bar{r}^{-1} u \right). \label{H2} \tag{H2}
\end{align}

Next, for every $r \in R^*$, define 
\[
    w(r) = \begin{pmatrix}
        0 & 0 & r \\
        0 & -r^{-1}\bar{r} & 0 \\
        \bar{r}^{-1} & 0 & 0
    \end{pmatrix}.
\]
Then we have $w(r) \in SU(3, R)$. For $r \in R^*$ and $(t,u) \in \mathcal{A}(R)$, we have
\begin{align}
    w(r) \, x_{+}(t, u) \, w(r)^{-1} &= x_{-} \left( -r \bar{r}^{-2} t, \, (r \bar{r})^{-1} u \right), \label{W1} \tag{W1} \\
    w(r) \, x_{-}(t, u) \, w(r)^{-1} &= x_{+} \left( -r^2 \bar{r}^{-1} t, \, r\bar{r} u \right). \label{W2} \tag{W2}
\end{align}
Moreover, for $r,s \in R^*$ we have
\begin{gather}
    h(r) = w (r) w(1)^{-1}, \label{HW1} \tag{HW1} \\
    h(r) \, w(s) \, h(r)^{-1} = w(r \bar{r} s), \label{HW2} \tag{HW2} \\
    w(r) \, h(s) \, w(r)^{-1}= h(\bar{s}^{-1}). \label{HW3} \tag{HW3} 
\end{gather}

Given any $(t, u) \in \mathcal{A}(R)^*$, define
\[
    w_{\pm}(t, u) := x_{\pm}(t, u) \cdot x_{\mp}(-tu^{-1}, \bar{u}^{-1}) \cdot x_{\pm}(tu^{-1} \bar{u}, u) \in E'_\sigma(3, R).
\]
For any pair $(t_1, u_1), (t_2, u_2) \in \mathcal{A}(R)^*$, define
\[
    h_{\pm}((t_1, u_1), (t_2, u_2)) := w_{\pm}(t_1, u_1) \cdot w_{\pm}(t_2, u_2) \in E'_\sigma(3, R).
\]
Observe that 
\begin{equation}
    w_{+}(t, u) = w(u) \quad \text{and} \quad w_{-}(t, u) = w(\bar{u}^{-1}). \label{W3} \tag{W3}
\end{equation}
Furthermore, we have
\begin{equation}
    h_{+}((t_1, u_1), (t_2, u_2)) = h(u_1 \bar{u}_2^{-1}) \quad \text{and} \quad h_{-}((t_1, u_1), (t_2, u_2)) = h(\bar{u}_1^{-1} u_2). \label{H3} \tag{H3}
\end{equation}

Let $A = (a_{ij}) \in SU(3, R)$. The following are the independent conditions on the entries of $A$ implied by the relation
\[
    A^t J \, \bar{A} = J,
\]
with all redundant equations omitted:
\begin{align}
    a_{31} \overline{a_{11}} - a_{21} \overline{{a}_{21}} + a_{11} \overline{{a}_{31}} &= 0, \label{U11} \tag{U-A.1.1} \\
    a_{31} \overline{{a}_{12}} - a_{21} \overline{{a}_{22}} + a_{11} \overline{{a}_{32}} &= 0, \label{U12} \tag{U-A.1.2} \\
    a_{31} \overline{{a}_{13}} - a_{21} \overline{{a}_{23}} + a_{11} \overline{{a}_{33}} &= 1, \label{U13} \tag{U-A.1.3} \\
    a_{32} \overline{{a}_{12}} - a_{22} \overline{{a}_{22}} + a_{12} \overline{{a}_{32}} &= -1, \label{U22} \tag{U-A.2.2} \\
    a_{32} \overline{{a}_{13}} - a_{22} \overline{{a}_{23}} + a_{12} \overline{{a}_{33}} &= 0, \label{U23} \tag{U-A.2.3} \\
    a_{33} \overline{{a}_{13}} - a_{23} \overline{{a}_{23}} + a_{13} \overline{{a}_{33}} &= 0. \label{U33} \tag{U-A.3.3}
\end{align}

An alternative set of relations for the same matrix can be derived from the identity
\[
    J \, \bar{A} = (A^t)^{-1} J.
\]
These relations (with redundancies) are listed below:
\begin{align}
    \overline{a_{11}} &= a_{11}a_{22} - a_{12}a_{21}, \label{U'11} \tag{U-B.1.1} \\
    \overline{a_{12}} &= a_{11}a_{23} - a_{13}a_{21}, \label{U'12} \tag{U-B.1.2} \\
    \overline{a_{13}} &= a_{12}a_{23} - a_{13}a_{22}, \label{U'13} \tag{U-B.1.3} \\
    \overline{a_{21}} &= a_{11}a_{32} - a_{12}a_{31}, \label{U'21} \tag{U-B.2.1} \\
    \overline{a_{22}} &= a_{11}a_{33} - a_{13}a_{31}, \label{U'22} \tag{U-B.2.2} \\
    \overline{a_{23}} &= a_{12}a_{33} - a_{13}a_{32}, \label{U'23} \tag{U-B.2.3} \\
    \overline{a_{31}} &= a_{21}a_{32} - a_{22}a_{31}, \label{U'31} \tag{U-B.3.1} \\
    \overline{a_{32}} &= a_{21}a_{33} - a_{23}a_{31}, \label{U'32} \tag{U-B.3.2} \\
    \overline{a_{33}} &= a_{22}a_{33} - a_{23}a_{32}. \label{U'33} \tag{U-B.3.3}
\end{align}

%%%%%%%%%%%%%%%%%%%%%%%%%%%%%%%%%%%%%%%%%%%%%%%%%%%%%%%%%%%%%%
%Section - Stable Range One Conditions
%%%%%%%%%%%%%%%%%%%%%%%%%%%%%%%%%%%%%%%%%%%%%%%%%%%%%%%%%%%%%%

\section{Special Stable Range One Conditions}\label{sec:SSR1}

Let $R$ be a commutative ring with an involution $\theta$. Recall that a vector $(a, b, c) \in R^3$ is called \emph{special unitary completable} if it appears as the first row of some matrix in $SU(3, R)$. We say that $R$ satisfies the \emph{special stable range one condition} $(SSR_1)$ if, for every such vector, there exists $(z_1, z_2) \in \mathcal{A}(R)$ such that $a + b z_1 + c z_2$ is a unit in $R$.

\begin{thm}\label{thm:SSR_1}
    \normalfont
    Let $R$ be a commutative ring with an involution $\theta$. The following conditions are equivalent:
    \begin{enumerate}[(a)]
        \item $R$ satisfies $(SSR_1)$ condition.
        \item $SU(3,R) = T_{\sigma}(3, R)\, U^{-}_{\sigma}(3, R)\, U^{+}_{\sigma}(3, R)\, U^{-}_{\sigma}(3, R)$.
        \item $SU(3,R) = T_{\sigma}(3, R)\, U^{+}_{\sigma}(3, R)\, U^{-}_{\sigma}(3, R)\, U^{+}_{\sigma}(3, R)$.
    \end{enumerate}
\end{thm}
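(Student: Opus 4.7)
The plan is to prove (a) $\Rightarrow$ (b) $\Rightarrow$ (a), and then deduce (b) $\Leftrightarrow$ (c) by a Weyl-type symmetry.

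For (a) $\Rightarrow$ (b), I take $A = (a_{ij}) \in SU(3, R)$ and observe that its first row $(a_{11}, a_{12}, a_{13})$ is special unitary completable by definition, so (a) supplies $(z_1, z_2) \in \mathcal{A}(R)$ with $r := a_{11} + a_{12} z_1 + a_{13} z_2 \in R^*$. Since $(\bar{z_1}, z_2)$ also lies in $\mathcal{A}(R)$, right-multiplying $A$ by $x_{-}(\bar{z_1}, z_2) \in U^{-}_{\sigma}(3, R)$ produces a matrix $A' \in SU(3, R)$ with $(A')_{11} = r$. It then suffices to prove the following Gauss-decomposition lemma: any $B \in SU(3, R)$ with $b_{11} \in R^*$ factors as $h(b_{11}) \cdot x_{-}(t, u) \cdot x_{+}(s, v) \in T_\sigma(3, R)\, U^{-}_{\sigma}(3, R)\, U^{+}_{\sigma}(3, R)$. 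Setting $C := h(b_{11})^{-1} B \in SU(3, R)$ with $C_{11} = 1$, I will verify $C = x_{-}(\bar{c_{21}}, c_{31}) \cdot x_{+}(c_{12}, c_{13})$. Specializing (U-B.1.3) and (U-B.3.1) at $c_{11} = 1$ delivers $c_{12}\bar{c_{12}} = c_{13} + \bar{c_{13}}$ and $c_{21}\bar{c_{21}} = c_{31} + \bar{c_{31}}$, which are precisely the required $\mathcal{A}(R)$-memberships, while (U-B.1.1), (U-B.1.2), (U-B.2.1), and (U-B.2.2) determine $c_{22}, c_{23}, c_{32}, c_{33}$ in a form that matches the corresponding entries of the product $x_{-}(\bar{c_{21}}, c_{31}) \cdot x_{+}(c_{12}, c_{13})$ by direct multiplication.

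For (b) $\Rightarrow$ (a), let $(a, b, c)$ be special unitary completable and let $A \in SU(3, R)$ have first row $(a, b, c)$. Using (b), write $A = h(r) \cdot x_{-}(t_1, u_1) \cdot x_{+}(t_2, u_2) \cdot x_{-}(t_3, u_3)$; expanding the first row through the product gives $(a, b, c) = (r(1 + t_2\bar{t_3} + u_2 u_3),\; r(t_2 + u_2 t_3),\; r u_2)$. Setting $(z_1, z_2) := (-\bar{t_3},\, t_3\bar{t_3} - u_3)$ and using $t_3\bar{t_3} = u_3 + \bar{u_3}$ to check $z_1\bar{z_1} = z_2 + \bar{z_2}$ shows $(z_1, z_2) \in \mathcal{A}(R)$; on substituting, the cross-terms in $a + b z_1 + c z_2$ cancel and leave $r \in R^*$.

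For (b) $\Leftrightarrow$ (c), note that $w(1) \in SU(3, R)$. By (HW3), conjugation by $w(1)$ preserves $T_{\sigma}(3, R)$, while (W1) and (W2) show that it interchanges $U^{+}_{\sigma}(3, R)$ and $U^{-}_{\sigma}(3, R)$ (sending $x_{+}(t, u) \mapsto x_{-}(-t, u)$ and $x_{-}(t, u) \mapsto x_{+}(-t, u)$). Conjugating the decomposition in (b) on both sides therefore produces (c), and the same argument reverses the implication.

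The main obstacle is the Gauss-decomposition lemma invoked in (a) $\Rightarrow$ (b): showing that every $B \in SU(3, R)$ with $B_{11}$ a unit admits the explicit $T_\sigma\, U^{-}_{\sigma}\, U^{+}_{\sigma}$ form with legally parametrized unipotent factors. Once this lemma is in place, the remaining pieces---the reduction to the case $B_{11} \in R^*$ via $(SSR_1)$, the reverse implication via a first-row expansion, and the Weyl symmetry bridging (b) and (c)---are comparatively formal.
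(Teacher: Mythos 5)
Your proposal is correct and follows essentially the same route as the paper: use $(SSR_1)$ to make the $(1,1)$-entry a unit by right multiplication with $x_{-}(\bar z_1,z_2)$, Gauss-decompose using the unitarity relations, read off the $(1,1)$-entry for the converse, and pass between (b) and (c) by conjugation with $w(1)$ via (W1), (W2), (HW3). The only (harmless) variation is in the elimination step: you factor $h(b_{11})^{-1}B$ directly as $x_{-}(\overline{c_{21}},c_{31})\,x_{+}(c_{12},c_{13})$ using (U-B.1.1)--(U-B.2.2) together with (U-B.1.3), (U-B.3.1), whereas the paper clears the first column by an explicit left multiplication with a lower unipotent whose parameters lie in $\mathcal{A}(R)$ by (U-A.1.1); both verifications go through.
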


\begin{proof}
    \noindent $(a) \implies (b):$ Let $A = (a_{ij}) \in SU(3, R)$. Then the vector $(a_{11}, a_{12}, a_{13})$ is a special unitary completable. Since $R$ satisfies $(SSR_1)$, there exists $(z_1, z_2) \in \mathcal{A}(R)$ such that $a_{11} + a_{12} z_1 + a_{13} z_2 \in R^*$.
    Define the matrix $B = (b_{ij})$ by
    \[
        B = A \, x_{-}(\bar{z}_1, z_2).
    \]
    Since $(z_1, z_2) \in \mathcal{A}(R)$, it follows that $(\bar{z}_1, z_2) \in \mathcal{A}(R)$, and hence $x_{-}(\bar{z}_1, z_2) \in SU(3, R)$. Therefore, $B \in SU(3, R)$ as well. Moreover, we have $b_{11} = a_{11} + a_{12} z_1 + a_{13} z_2 \in R^*$.

    Now consider the matrix $C = (c_{ij})$ defined by
    \[
        C = x_{-} \left( -\frac{\overline{b_{21}}}{\overline{b_{11}}},\ -\frac{b_{31}}{b_{11}} + \frac{b_{21} \overline{{b}_{21}}}{b_{11} \overline{{b}_{11}}} \right) B.
    \]
    By (\ref{U11}), the pair $\left( -\frac{\overline{b_{21}}}{\overline{b_{11}}},\ -\frac{b_{31}}{b_{11}} + \frac{b_{21} \overline{{b}_{21}}}{b_{11} \overline{{b}_{11}}} \right) \in \mathcal{A}(R)$. Hence the matrix 
    \[
    x_{-} \left( -\frac{\overline{b_{21}}}{\overline{b_{11}}},\ -\frac{b_{31}}{b_{11}} + \frac{b_{21} \overline{{b}_{21}}}{b_{11} \overline{{b}_{11}}} \right) \in SU(3, R),
    \]
    and therefore $C \in SU(3, R)$.

    Observe that $c_{21} = c_{31} = 0$, and from equation (\ref{U'32}), it follows that $c_{32} = 0$ as well. Applying equations (\ref{U13}), (\ref{U22}) and the fact that $\det(C) = 1$, the matrix $C$ takes the form
    \[
        C = \begin{pmatrix}
            c_{11} & * & * \\
            0 & (c_{11})^{-1} \bar{c}_{11} & * \\
            0 & 0 & (\bar{c}_{11})^{-1}
        \end{pmatrix}
        = h(c_{11})\, x_{+}(*,*),
    \]
    where $x_{+}(*,*) \in U^{+}_{\sigma}(3, R)$.

    Therefore, we conclude that
    \[
        x_{-} \left( -\frac{\overline{b_{21}}}{\overline{b_{11}}},\ -\frac{b_{31}}{b_{11}} + \frac{b_{21} \overline{{b}_{21}}}{b_{11} \overline{{b}_{11}}} \right) \cdot A \cdot x_{-}(\bar{z}_1, z_2) = h(c_{11})\, x_{+}(*,*).
    \]
    Rewriting $A$ accordingly, we obtain:
    \begin{align*}
        A &= x_{-}(*,*) \cdot h(*) \cdot x_{+}(*,*) \cdot x_{-}(*,*) \\
        &= h(*) \cdot \left(h(*)^{-1} x_{-}(*,*) h(*)\right) \cdot x_{+}(*,*) \cdot x_{-}(*,*) \\
        &= h(*) \cdot x_{-}(*,*) \cdot x_{+}(*,*) \cdot x_{-}(*,*) \\
        &\in T_{\sigma}(3, R)\, U^{-}_{\sigma}(3, R)\, U^{+}_{\sigma}(3, R)\, U^{-}_{\sigma}(3, R),
    \end{align*}
    as required.
    
    \vspace{2mm}

    \noindent $(b) \implies (a):$ Let $(a, b, c) \in R^3$ be a special unitary completable vector. Then, by definition, there exists a matrix $A \in SU(3, R)$ of the form
    \[
        A = \begin{pmatrix}
            a & b & c \\
            * & * & * \\
            * & * & *
        \end{pmatrix}.
    \]
    By assumption, there exists $r \in R^*$ and $(z_1, z_2) \in \mathcal{A}(R)$ such that $A$ admits the factorization
    \[
        A = h(r)\, x_{-}(*,*)\, x_{+}(*,*)\, x_{-}(-\bar{z}_1, \bar{z_2}).
    \]
    Multiplying both sides on the right by $x_{-}(\bar{z}_1, z_2)$ yields
    \[
        A \, x_{-}(\bar{z}_1, z_2) = h(r) \, x_{-}(*,*) \, x_{+}(*,*).
    \]
    Comparing the $(1,1)$-entry on both sides, we obtain
    \[
        a + b z_1 + c z_2 = r \in R^*,
    \]
    as required.

    \vspace{2mm}

    \noindent $(b) \iff (c):$ The equivalence of these statements follows directly from the following identities:
    \begin{align*}
        w(1) \, SU(3, R) \, w(1)^{-1} &= SU(3, R) && \text{since } w(1) \in SU(3, R), \\
        w(1) \, T_{\sigma}(3, R) \, w(1)^{-1} &= T_{\sigma}(3, R) && \text{by equation (\ref{HW3})}, \\
        w(1) \, U^{+}_{\sigma}(3, R) \, w(1)^{-1} &= U^{-}_{\sigma}(3, R) && \text{by equation (\ref{W1})}, \\
        w(1) \, U^{-}_{\sigma}(3, R) \, w(1)^{-1} &= U^{+}_{\sigma}(3, R) && \text{by equation (\ref{W2})}.
    \end{align*}
    This completes the proof.
\end{proof}

%%%%%%%%%%%%%%%%%%%%%%%%%%%%%%%%%%%%%%%%%%%%%%%%%%%%%%%%%%%%%%

\begin{cor}\label{cor:SSR_1}
    \normalfont
    Let $R$ be a commutative ring with an involution $\theta$. The following are equivalent:
    \begin{enumerate}[(a)]
        \item $R$ satisfies $(SSR_1)$ condition.
        \item For every vector $(a,b,c) \in R^3$ that appears as the first column of some matrix in \( SU(3, R) \), there exists a pair $(z_1, z_2) \in \mathcal{A}(R)$ such that \( a + b z_1 + c z_2 \in R^* \).
        \item For every vector $(a,b,c) \in R^3$ that appears as the last row of some matrix in \( SU(3, R) \), there exists a pair $(z_1, z_2) \in \mathcal{A}(R)$ such that \( c + b z_1 + a z_2 \in R^* \).
        \item For every vector $(a,b,c) \in R^3$ that appears as the last column of some matrix in \( SU(3, R) \), there exists a pair $(z_1, z_2) \in \mathcal{A}(R)$ such that \( c + b z_1 + a z_2 \in R^* \).
    \end{enumerate}
\end{cor}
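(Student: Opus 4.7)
The plan is to establish the four equivalences by reducing each of (b), (c), and (d) to (a) through a natural bijection of $SU(3,R)$. The three relevant bijections all arise from the defining identity $A^tJ\bar{A}=J$: the inversion $\iota:A\mapsto A^{-1}$, which by that identity can be rewritten as $\iota(A)=J\bar{A}^tJ$; the $J$-conjugation $\tau:A\mapsto JAJ$, which is an involution of $SU(3,R)$ since $J^2=I_3$; and their composition $\tau\circ\iota:A\mapsto JA^{-1}J$. All three are bijections of $SU(3,R)$.

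A direct entry-wise computation with the explicit $J$ yields three key correspondences: the first row of $JAJ$ is $(A_{33},-A_{32},A_{31})$; the last column of $A^{-1}=J\bar{A}^tJ$ is $(\bar A_{13},-\bar A_{12},\bar A_{11})^t$; and the first row of $JA^{-1}J$ is $(\bar A_{11},\bar A_{21},\bar A_{31})$. Using these, the three equivalences follow. For (a)$\iff$(c), apply (a) to $\tau(A)$ (whose first row is $(c,-b,a)$ when $A$ has last row $(a,b,c)$) and substitute $z_1\mapsto-z_1$; this substitution preserves $\mathcal{A}(R)$ since its defining relation $z_1\bar z_1=z_2+\bar z_2$ is invariant under negation of $z_1$. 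For (a)$\iff$(d), apply (d) to $\iota(A)$ (whose last column is $(\bar c,-\bar b,\bar a)^t$ when $A$ has first row $(a,b,c)$), take $\theta$-conjugates, and substitute $(z_1,z_2)\mapsto(-\bar z_1,\bar z_2)$. For (a)$\iff$(b), apply (a) to $(\tau\circ\iota)(A)$ (whose first row is $(\bar a,\bar b,\bar c)$ when $A$ has first column $(a,b,c)^t$), take $\theta$-conjugates, and substitute $(z_1,z_2)\mapsto(\bar z_1,\bar z_2)$. In every case the substitution preserves $\mathcal{A}(R)$, and the reverse implications are obtained analogously by applying the same bijection and reading off the appropriate alternate row or column of the image.

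The proof is essentially bookkeeping with matrix products involving the fixed matrix $J$; the only real obstacle is carrying out the nine-entry matrix computations accurately (in particular keeping track of the signs introduced by the off-diagonal $-1$ entries of $J$) and verifying that each substitution on $\mathcal{A}(R)$ preserves its defining relation. No hypothesis on $R$ beyond the $(SSR_1)$ condition itself is required.
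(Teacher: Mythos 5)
Your proof is correct, but it takes a genuinely different route from the paper's. The paper deduces the corollary from Theorem~\ref{thm:SSR_1}: for (a)$\implies$(b) it writes an arbitrary $A\in SU(3,R)$ in the Gauss form $A=h(r)\,x_{+}(-r^{-2}\bar r z_1,(r\bar r)^{-1}\bar z_2)\,x_{-}(\ast,\ast)\,x_{+}(\ast,\ast)$ supplied by that theorem, multiplies on the left by the matching $x_{+}(z_1,z_2)$ to cancel the prescribed unipotent factor, and reads off the $(1,1)$-entry $a+bz_1+cz_2=r\in R^*$; for (b)$\implies$(a) it reruns the argument of Theorem~\ref{thm:SSR_1} to recover the decomposition $SU(3,R)=T_\sigma U^{+}_\sigma U^{-}_\sigma U^{+}_\sigma$ from (b) and then uses the equivalence (a)$\iff$(c) of that theorem, the cases (c) and (d) being declared analogous. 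You bypass Theorem~\ref{thm:SSR_1} entirely and instead transport the first-row condition to the other rows and columns via the symmetries $A\mapsto JAJ$ and $A\mapsto A^{-1}=J\bar A^{\,t}J$ of $SU(3,R)$ (both of which do preserve $SU(3,R)$, as one checks from $A^tJ\bar A=J$ and $J^2=I$), combined with the substitutions $z_1\mapsto -z_1$, $(z_1,z_2)\mapsto(-\bar z_1,\bar z_2)$, $(z_1,z_2)\mapsto(\bar z_1,\bar z_2)$ on $\mathcal{A}(R)$, each of which indeed preserves the relation $z_1\bar z_1=z_2+\bar z_2$; your three row/column identities (first row of $JAJ$ equal to $(A_{33},-A_{32},A_{31})$, last column of $A^{-1}$ equal to $(\bar A_{13},-\bar A_{12},\bar A_{11})^t$, first row of $JA^{-1}J$ equal to $(\bar A_{11},\bar A_{21},\bar A_{31})$) also check out against the explicit $J$. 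Your argument is more elementary and self-contained --- pure bookkeeping with $J$, no factorization needed --- and it makes the mutual equivalence of the four conditions a consequence of symmetry alone; the paper's route buys uniformity, reusing the decomposition machinery it has already built (and needs anyway for the main theorems), and keeping the corollary stylistically parallel to Theorem~\ref{thm:SSR_1}. Both proofs leave some directions to "analogous" verifications, which in your setup are justified because the maps you use are involutions of $SU(3,R)$.
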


\begin{proof}
    \noindent $(a) \implies (b):$ Suppose $R$ satisfies $(SSR_1)$ condition. Let $(a,b,c) \in R^3$ be such that it forms the first column of some matrix $A \in SU(3, R)$. By Theorem~\ref{thm:SSR_1}, there exists $r \in R^*$ and $(z_1, z_2) \in \mathcal{A}(R)$ such that $A$ admits the factorization
    \[
        A = h(r) x_{+}(- r^{-2} \bar{r} z_1, (r \bar{r})^{-1} \bar{z}_2) x_{-}(*, *) x_{+}(*, *).
    \]
    Left-multiplying both sides by \( x_{+}(z_1, z_2) \), we get
    \begin{align*}
        x_{+}(z_1, z_2) A &= x_{+}(z_1, z_2) \left( h(r) x_{+}(- r^{-2} \bar{r} z_1, (r \bar{r})^{-1} \bar{z}_2) h(r)^{-1} \right) h(r) \, x_{-}(*, *) \, x_{+}(*, *) \\
        &= h(r) x_{-} (*, *) x_{+} (*, *).
    \end{align*}
    Looking at the \((1,1)\)-entry of both sides, we find
    \[
        a + b z_1 + c z_2 = r \in R^*,
    \]
    as desired.

    \vspace{2mm}

    \noindent $(b) \implies (a):$ By following the similar argument used in the $(a) \implies (b)$ direction of Theorem~\ref{thm:SSR_1}, we can obtain  
    \[
        SU(3, R) = T_{\sigma}(3, R) \, U^{+}_{\sigma}(3, R) \, U^{-}_{\sigma}(3, R) \, U^{+}_{\sigma}(3, R).
    \] 
    Since $(a)$ and $(c)$ of Theorem~\ref{thm:SSR_1} are equivalent, the conclusion follows.    
    
    \vspace{2mm}

    \noindent $(a) \iff (c) \ \text{and} \ (a) \iff (d):$ The proofs are analogous to that of $(a) \iff (b)$ and are therefore omitted.
\end{proof}

%%%%%%%%%%%%%%%%%%%%%%%%%%%%%%%%%%%%%%%%%%%%%%%%%%%%%%%%%%%%%%

Proposition~\ref{prop:semilocal has SSR_1} (stated below) illustrates a class of rings that satisfy the $(SSR_1)$ condition. Before presenting the proposition, we introduce some relevant notation.
For any $r \in R$, define
\[
    M_r := \{ \mathfrak{m} \in \mathrm{Max}(R) \mid r \in \mathfrak{m} \}.
\]
Note that $M_0 = \mathrm{Max}(R)$ and $M_1 = \emptyset$.
Now, define the set
\[
    J_r := \left( \bigcap_{\mathfrak{m} \in M_0 \setminus M_r} \mathfrak{m} \right) \setminus \left( \bigcup_{\mathfrak{m} \in M_r} \mathfrak{m} \right).
\]
Adopting the standard conventions that the intersection over the empty set is the entire ring $R$, and the union over the empty set is the empty set, we observe the following equivalences:
\begin{enumerate}[(a)]
    \item $r \in R^* \quad \Longleftrightarrow \quad M_r = \emptyset \quad \Longleftrightarrow \quad J_r = \mathrm{Jac}(R)$, the Jacobson radical of $R$.
    \item $r \in \mathrm{Jac}(R) \quad \Longleftrightarrow \quad M_r = M_0 \quad \Longleftrightarrow \quad J_r = R^*.$
\end{enumerate}

%%%%%%%%%%%%%%%%%%%%%%%%%%%%%%%%%%%%%%%%%%%%%%%%%%%%%%%%%%%%%%

\begin{lemma}\label{lemma:I_r and J_r}
    \normalfont
    Let $R$ be a commutative ring with unity, and let $r \in R$.
    \begin{enumerate}[(a)]
        \item If $J_r \neq \emptyset$, then for any $s \in J_r$, we have $r + s \in R^*$.
        \item Assume that $R$ is semi-local ring. Then $J_r \neq \emptyset$.
    \end{enumerate}
\end{lemma}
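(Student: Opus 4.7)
The plan is to dispatch both parts by a direct ideal-theoretic analysis, using the defining properties of $J_r$ for (a) and the Chinese Remainder Theorem (plus the finiteness of $\mathrm{Max}(R)$) for (b). Neither part should present a serious obstacle; the only care needed is in handling the boundary cases where $M_r = \emptyset$ or $M_r = M_0$ via the conventions stated immediately before the lemma.

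For part (a), I would prove the equivalent statement that $r+s$ lies in no maximal ideal of $R$. Fix an arbitrary $\mathfrak{m} \in \mathrm{Max}(R)$ and split into two cases according to whether $\mathfrak{m} \in M_r$ or $\mathfrak{m} \in M_0 \setminus M_r$. If $\mathfrak{m} \in M_r$, then $r \in \mathfrak{m}$ but $s \notin \mathfrak{m}$ by the definition of $J_r$, so $r+s \notin \mathfrak{m}$. If instead $\mathfrak{m} \in M_0 \setminus M_r$, then $r \notin \mathfrak{m}$ while $s \in \mathfrak{m}$, again giving $r+s \notin \mathfrak{m}$. Since $\mathfrak{m}$ was arbitrary, $r+s$ avoids every maximal ideal, hence $r+s \in R^*$.

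For part (b), I would first dispense with the two degenerate cases using the conventions in the excerpt: if $M_r = \emptyset$ then $J_r = \mathrm{Jac}(R) \ni 0$, and if $M_r = M_0$ then $J_r = R^* \ni 1$, so in both cases $J_r$ is nonempty. Assume therefore that $M_r$ is a proper nonempty subset of $M_0$. Since $R$ is semi-local, $M_0$ is a finite set $\{\mathfrak{m}_1,\dots,\mathfrak{m}_n\}$ of pairwise coprime maximal ideals, so the Chinese Remainder Theorem yields a surjection
\[
R \;\twoheadrightarrow\; \prod_{i=1}^{n} R/\mathfrak{m}_i.
\]
Pick any preimage $s$ of the tuple that is $0 \bmod \mathfrak{m}$ for $\mathfrak{m} \in M_0 \setminus M_r$ and $1 \bmod \mathfrak{m}$ for $\mathfrak{m} \in M_r$. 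By construction $s$ lies in every maximal ideal outside $M_r$ and in none of the maximal ideals inside $M_r$, so $s \in J_r$ and in particular $J_r \neq \emptyset$.

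The hardest step, such as it is, is simply bookkeeping the three cases in (b) cleanly against the conventions for empty intersection/union; once that is done the CRT argument is routine. I would present (a) first, since it is purely formal, and then (b), being careful to invoke the semi-locality of $R$ only in the nondegenerate case where CRT is genuinely needed.
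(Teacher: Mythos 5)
Your proof is correct. Part (a) is the same argument as the paper's, just run directly (showing $r+s$ lies in no maximal ideal, splitting on $\mathfrak{m} \in M_r$ versus $\mathfrak{m} \in M_0 \setminus M_r$) rather than by contradiction; the content is identical. Part (b) genuinely differs: the paper argues by contradiction, assuming $\bigcap_{\mathfrak{m} \in M_0 \setminus M_r} \mathfrak{m} \subseteq \bigcup_{\mathfrak{m} \in M_r} \mathfrak{m}$, invoking prime avoidance (which needs $M_r$ finite, hence semi-locality) to trap the intersection inside a single $\mathfrak{m}_0 \in M_r$, and then deriving $1 \in \mathfrak{m}_0$ from a product of comaximality relations $x_{\mathfrak{m}} + y_{\mathfrak{m}} = 1$ over the finitely many $\mathfrak{m} \in M_0 \setminus M_r$. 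You instead construct an element of $J_r$ explicitly via the Chinese Remainder Theorem applied to the finitely many pairwise comaximal maximal ideals, prescribing $s \equiv 0$ modulo the ideals outside $M_r$ and $s \equiv 1$ modulo those in $M_r$; this is constructive, shorter, and subsumes the degenerate cases $M_r = \emptyset$ and $M_r = M_0$ (which you also correctly dispose of via the stated conventions, taking $s = 0$ or $s = 1$). The paper's route is more elementary in that it re-derives the needed comaximality product by hand instead of citing CRT, but both arguments use semi-locality (finiteness of $\mathrm{Max}(R)$) in an essential way, and neither is more general than the other.
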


\begin{proof}
    \noindent (a) Let $s \in J_r$. Suppose, for the sake of contradiction, that $r + s \notin R^*$. 
    First, assume $r + s \in \bigcup_{\mathfrak{m} \in M_r} \mathfrak{m}$, that is, $r + s \in \mathfrak{m}_0$ for some $\mathfrak{m}_0 \in M_r$. Since $r \in \mathfrak{m}_0$, it follows that $s \in \mathfrak{m}_0$ as well. This contradicts the fact that $s \notin \bigcup_{\mathfrak{m} \in M_r} \mathfrak{m}$.

    Now assume $r + s \notin \bigcup_{\mathfrak{m} \in M_r} \mathfrak{m}$, that is, $r + s \notin \mathfrak{m}$ for every $\mathfrak{m} \in M_r$. However, since \( r + s \notin R^* \), it must lie in some maximal ideal \( \mathfrak{m}_0 \in M_0 \setminus M_r \). Since $s \in J_r$, we have $s \in \mathfrak{m}_0$, which implies that $r \in \mathfrak{m}_0$. This contradicts the fact that \( \mathfrak{m}_0 \in M_0 \setminus M_r \). 
    Therefore, $r + s \in R^*$, as desired.

    \medskip

    \noindent (b) Suppose, for contradiction, that $J_r = \emptyset$. Then, by definition,
    \[
        \bigcap_{\mathfrak{m} \in M_0 \setminus M_r} \mathfrak{m} \subseteq \bigcup_{\mathfrak{m} \in M_r} \mathfrak{m}.
    \]
    By the prime avoidance theorem, there exists $\mathfrak{m}_0 \in M_r$ such that
    \[
        \bigcap_{\mathfrak{m} \in M_0 \setminus M_r} \mathfrak{m} \subseteq \mathfrak{m}_0.
    \]
    For each $\mathfrak{m} \in M_0 \setminus M_r$, we have $\mathfrak{m} \neq \mathfrak{m}_0$, and hence $\mathfrak{m} + \mathfrak{m}_0 = R$. Therefore, for each such $\mathfrak{m}$, there exist elements $x_{\mathfrak{m}} \in \mathfrak{m}$ and $y_{\mathfrak{m}} \in \mathfrak{m}_0$ such that $x_{\mathfrak{m}} + y_{\mathfrak{m}} = 1$. 
    Define
    \[
        x := \prod_{\mathfrak{m} \in M_0 \setminus M_r} x_{\mathfrak{m}} = \prod_{\mathfrak{m} \in M_0 \setminus M_r} (1 - y_{\mathfrak{m}}) = 1 - y,
    \]
    for some $y \in \mathfrak{m}_0$. Since $x_{\mathfrak{m}} \in \mathfrak{m}$ for each $\mathfrak{m} \in M_0 \setminus M_r$, it follows that $x \in \bigcap_{\mathfrak{m} \in M_0 \setminus M_r} \mathfrak{m} \subseteq \mathfrak{m}_0$. However, $x = 1 - y$ with $y \in \mathfrak{m}_0$ implies $1 \notin \mathfrak{m}_0$, a contradiction. 
    Thus, the assumption $J_r = \emptyset$ is false, and we conclude that $J_r \neq \emptyset$, which proves $(b)$.
\end{proof}

%%%%%%%%%%%%%%%%%%%%%%%%%%%%%%%%%%%%%%%%%%%%%%%%%%%%%%%%%%%%%%

\begin{prop}\label{prop:semilocal has SSR_1}
    \normalfont
    Let $R$ be a semi-local ring with an involution $\theta$. Assume that $\bar{\mathfrak{m}} = \mathfrak{m}$ for every maximal ideal $\mathfrak{m} \in \mathrm{Max}(R)$ and $\mathcal{A}(R)^* \neq \emptyset$. Then $R$ satisfies $(SSR_1)$ condition.
\end{prop}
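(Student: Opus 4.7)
The plan is to exhibit $(z_1, z_2) \in \mathcal{A}(R)$ explicitly as a member of a two-parameter family, reduce the problem modulo each maximal ideal of $R$, and assemble the local data via the Chinese remainder theorem. Fix $(t_0, u_0) \in \mathcal{A}(R)^*$, which exists by hypothesis. For each $r, v \in R$ set
\[
z_1 = r t_0, \qquad z_2 = r \bar r u_0 + v - \bar v.
\]
A direct check using $\overline{v - \bar v} = -(v - \bar v)$ shows that $(z_1, z_2) \in \mathcal{A}(R)$, and under this parametrization
\[
a + b z_1 + c z_2 = a + (b t_0) r + (c u_0) r \bar r + c(v - \bar v).
\]
Since $R$ is semi-local with $\bar{\mathfrak{m}} = \mathfrak{m}$ for every $\mathfrak{m} \in \mathrm{Max}(R)$, each residue field $K_\mathfrak{m} = R/\mathfrak{m}$ inherits a well-defined involution; moreover $(\bar t_0, \bar u_0) \in \mathcal{A}(K_\mathfrak{m})^*$ because $u_0 \in R^*$, and the reduction of $(a, b, c)$ remains special unitary completable in $SU(3, K_\mathfrak{m})$.

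Set $M_a = \{\mathfrak{m} \in \mathrm{Max}(R) : a \in \mathfrak{m}\}$. For each $\mathfrak{m} \notin M_a$ we prescribe the residues of $r$ and $v$ in $K_\mathfrak{m}$ to be $0$, so that the displayed expression reduces to $\bar a \neq 0$ modulo $\mathfrak{m}$. For each $\mathfrak{m} \in M_a$ it remains to find residues of $r$ and $v$ making the displayed expression nonzero modulo $\mathfrak{m}$; granted this, CRT produces a single pair $(r, v) \in R^2$ realizing all prescribed residues simultaneously, and the resulting $(z_1, z_2) \in \mathcal{A}(R)$ satisfies $a + b z_1 + c z_2 \in R^*$ since its residue modulo every maximal ideal is nonzero. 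The whole proof therefore reduces to the following \emph{field-level claim}: if $K$ is a field with involution, $(t, u) \in \mathcal{A}(K)^*$, and $(0, b', c') \in K^3$ is special unitary completable, then there exist $r, v \in K$ with $b' t \cdot r + c' u \cdot r \bar r + c'(v - \bar v) \neq 0$ in $K$.

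The field-level claim is the main obstacle, and I would establish it by case analysis. If $c' = 0$, completability applied to relation \eqref{U'12} for the first row $(0, b', 0)$ forces $\overline{b'} = 0$, hence $b' = 0$, contradicting the unimodularity of $(0, b', c')$; so $c' \neq 0$. If the involution on $K$ is nontrivial then $K_\theta$ is a proper subfield, and picking $r = 0$ together with any $v \in K \setminus K_\theta$ gives $c'(v - \bar v) \neq 0$. If the involution on $K$ is trivial then the skew part $v - \bar v$ vanishes and the expression reduces to the polynomial $r(b' t + c' u\, r)$ in $r$, whose leading coefficient $c' u$ is nonzero, so it attains a nonzero value on $K$. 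The lone small-field exception $K = \mathbb{F}_2$ (necessarily characteristic $2$, in which $t = 0$ is forced by $t \bar t = u + \bar u = 2u = 0$) reduces the expression to $c' u \, r^2$, and $r = 1$ then yields $c' u = c' \neq 0$. Combining this field-level analysis with the CRT assembly completes the proof.
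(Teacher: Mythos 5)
Your proof is correct, but it takes a genuinely different route from the paper's. The paper argues globally: from the unitary relations it extracts the ideal-theoretic facts $M_a \cup M_c \subseteq M_b$ and $M_a \cap M_c = \emptyset$, then uses prime avoidance (Lemma~\ref{lemma:I_r and J_r}) to produce a single element $z$ lying in every maximal ideal outside $M_a \cup M_c$ but in none of $M_a$, and scales a fixed seed pair to $(zz_1, z\bar z z_2)$, so that $c z\bar z z_2 \in J_a$ makes $a + cz\bar z z_2$ a unit while $b z z_1$ falls into $\mathrm{Jac}(R)$. You instead work residue field by residue field and glue with CRT (legitimate, since semi-locality gives finitely many pairwise comaximal maximal ideals, each $\theta$-stable by hypothesis, so the involution descends), using the richer two-parameter family $(r t_0,\ r\bar r u_0 + v - \bar v)$; the skew term $v-\bar v$ lets you handle residue fields with nontrivial induced involution without any control on $b$, and the only unitary input you need is that a completable row cannot have $a'=c'=0$ (via (\ref{U'12}) and $\det=1$), in contrast to the paper's heavier use of the relations to force $b$ into the relevant maximal ideals. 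The price is the residue-field case analysis (nontrivial versus trivial involution, plus the $\mathbb{F}_2$ exception), which you handle correctly: in characteristic $2$ with trivial involution the reduction of $t_0$ is forced to vanish, so the quadratic degenerates exactly as you say, and for all other fields the degree-two polynomial argument applies. Both proofs use the two hypotheses in the same essential way ($\mathcal{A}(R)^*\neq\emptyset$ for the seed pair, $\bar{\mathfrak m}=\mathfrak m$ to make the involution interact with each maximal ideal) and both are confined to semi-local rings by the finiteness they invoke (CRT for you, Lemma~\ref{lemma:I_r and J_r}(b) for the paper). One cosmetic point: in the $\mathfrak m \notin M_a$ case you write that the expression reduces to ``$\bar a$'', where the bar means reduction modulo $\mathfrak m$ rather than the involution; rephrase to avoid the notational clash.
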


\begin{proof}
    Let $(a,b,c) \in R^3$ be a special unitary completable vector. Then, by definition, there exists a matrix $A \in SU(3, R)$ of the form 
    \[
        A = \begin{pmatrix}
            a & b & c \\
            * & * & * \\
            * & * & *
        \end{pmatrix}.
    \]
    From equations~(\ref{U'11}), (\ref{U'12}) and (\ref{U'21}), we deduce that $M_a \subset M_b$. Similarly, equations~(\ref{U'12}), (\ref{U'13}) and (\ref{U'23}) imply that $M_c \subset M_b$. Consequently, $(M_a \cup M_c) \subset M_b \subset M_0$. 
    Moreover, since $\det (A) = 1$, we can deduce that $M_a \cap M_c = \emptyset$. 
    Finally, observe that $M_{ac} = M_a \cup M_c$. 

    Since $\mathcal{A}(R)^* \neq \emptyset$, choose any $(z_1, z_2) \in \mathcal{A}(R)^*$.
    Define the set
    \[
        J = \left( \bigcap_{\mathfrak{m} \in M_0 \setminus M_{ac}} \mathfrak{m} \right) \setminus \left( \bigcup_{\mathfrak{m} \in M_a} \mathfrak{m} \right).
    \]
    Note that, $J_{ac} \subset J$. By Lemma~\ref{lemma:I_r and J_r}, part $(b)$, we have $J_{ac} \neq \emptyset$ and hence $J \neq \emptyset$. 
    Fix any $z \in J$. Then $(zz_1, z \bar{z} z_2) \in \mathcal{A}(R)$.
    We observe that $c z \bar{z} z_2 \in J_a$. By Lemma~\ref{lemma:I_r and J_r}, part $(a)$, we conclude that $a + c z \bar{z} z_2 \in R^*$. Also, observe that $b z z_1 \in \mathrm{Jac} (R)$. Hence $a + b z z_1 + c z \bar{z} z_2 \in R^*$.
\end{proof}

%%%%%%%%%%%%%%%%%%%%%%%%%%%%%%%%%%%%%%%%%%%%%%%%%%%%%%%%%%%%%%

\begin{cor}
    \normalfont
    Let $(R, \mathfrak{m}, k)$ be a local ring with an involution $\theta$. Assume that $\mathcal{A}(R)^* \neq \emptyset$. Then $R$ satisfies $(SSR_1)$ condition.
\end{cor}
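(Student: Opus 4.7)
The plan is to derive this corollary directly from Proposition~\ref{prop:semilocal has SSR_1}. A local ring is by definition semi-local, so the only hypothesis that needs to be verified is that the involution $\theta$ stabilizes every maximal ideal of $R$.

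Since $(R, \mathfrak{m}, k)$ is local, $\mathfrak{m}$ is the unique maximal ideal of $R$. The involution $\theta$ is a ring automorphism, and so the image $\bar{\mathfrak{m}} = \theta(\mathfrak{m})$ is also a maximal ideal of $R$. By uniqueness, we must have $\bar{\mathfrak{m}} = \mathfrak{m}$. Together with the standing assumption $\mathcal{A}(R)^* \neq \emptyset$, all hypotheses of Proposition~\ref{prop:semilocal has SSR_1} are fulfilled, so $R$ satisfies the $(SSR_1)$ condition.

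There is no real obstacle here; the corollary is essentially the observation that the condition ``$\bar{\mathfrak{m}} = \mathfrak{m}$ for every maximal ideal'' is automatic in the local case, since any automorphism permutes the (single) maximal ideal trivially. The proof should consist of just a sentence or two invoking the proposition.
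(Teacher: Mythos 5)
Your proof is correct and is exactly the intended argument: the paper states this corollary as an immediate consequence of Proposition~\ref{prop:semilocal has SSR_1}, with the only point to check being that $\theta(\mathfrak{m})=\mathfrak{m}$, which is automatic since an automorphism sends the unique maximal ideal to a maximal ideal. Nothing is missing.
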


\begin{cor}
    \normalfont
    Every field (with an involution) satisfies $(SSR_1)$ condition.
\end{cor}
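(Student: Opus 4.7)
The strategy is to invoke the preceding corollary with $R = F$ and $\mathfrak{m} = (0)$. Since every field is local (indeed semi-local) and the hypothesis $\bar{\mathfrak{m}} = \mathfrak{m}$ on maximal ideals is vacuous when $\mathfrak{m} = (0)$, the only point requiring verification is that $\mathcal{A}(F)^{*} \neq \emptyset$; the conclusion then follows at once.

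To exhibit an element of $\mathcal{A}(F)^{*}$, I would split into two cases according to whether the involution $\theta$ on $F$ is trivial or not. First, suppose $\theta$ is the identity, so that $t\bar{t} = t^{2}$ and $u + \bar{u} = 2u$. If $\mathrm{char}(F) \neq 2$, the pair $(t,u) = (1, 2^{-1})$ satisfies $t\bar{t} = 1 = u + \bar{u}$ with $u \in F^{*}$; if $\mathrm{char}(F) = 2$, the pair $(t,u) = (0, 1)$ satisfies $t\bar{t} = 0 = u + \bar{u}$ with $u \in F^{*}$.

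Next, suppose $\theta$ is non-trivial. Then $F_{\theta} = \{ r \in F \mid \bar{r} = r \}$ is a subfield, and by Artin's theorem the extension $F/F_{\theta}$ is Galois of degree $2$. The trace map $\mathrm{Tr}_{F/F_{\theta}} \colon F \to F_{\theta}$, $u \mapsto u + \bar{u}$, is surjective for any finite Galois extension of fields, so there exists $u \in F$ with $u + \bar{u} = 1$; necessarily $u \neq 0$. Taking $t = 1$, we obtain $t\bar{t} = 1 = u + \bar{u}$, so $(1, u) \in \mathcal{A}(F)^{*}$.

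No serious obstacle arises; the only subtlety is to handle the characteristic-two case when $\theta$ is trivial, where the usual choice $u = 2^{-1}$ is unavailable and one must instead fall back on $(t, u) = (0, 1)$. With $\mathcal{A}(F)^{*} \neq \emptyset$ established in every case, the hypotheses of the preceding corollary are met, and therefore $F$ satisfies the $(SSR_{1})$ condition.
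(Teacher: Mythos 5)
Your proposal is correct and follows the same route the paper intends: the corollary is just the specialization of the preceding local-ring corollary (itself from Proposition~\ref{prop:semilocal has SSR_1}) to $R=F$, $\mathfrak{m}=(0)$. Your explicit case-by-case verification that $\mathcal{A}(F)^{*}\neq\emptyset$ (trace surjectivity for a nontrivial involution, and the pairs $(1,2^{-1})$ resp.\ $(0,1)$ for the trivial one) is exactly the detail the paper leaves implicit, and it is sound.
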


%%%%%%%%%%%%%%%%%%%%%%%%%%%%%%%%%%%%%%%%%%%%%%%%%%%%%%%%%%%%%%

\begin{prop}\label{prop:prod of SSR1}
    \normalfont
    Let $\Gamma$ be a fixed index set, and for each $i \in \Gamma$, let $R_i$ be a ring equipped with an involution $\theta_i$. Suppose that each $R_i$ satisfies the $(SSR_1)$ condition. Then the product ring $R = \prod_{i \in \Gamma} R_i$, equipped with the componentwise involution $\theta = \prod_{i \in \Gamma} \theta_i$, also satisfies the $(SSR_1)$ condition.
\end{prop}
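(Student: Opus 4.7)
The plan is to verify everything componentwise, exploiting the fact that $SU(3, -)$, $\mathcal{A}(-)$, and the group of units all behave well under arbitrary products.

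First, I would observe that for a product ring $R = \prod_{i \in \Gamma} R_i$, there are natural isomorphisms at every level needed: $M(3, R) \cong \prod_i M(3, R_i)$ is compatible with matrix multiplication and transposition, and the componentwise involution $\theta = \prod_i \theta_i$ matches the componentwise bar, so the defining relation $A^t J \bar{A} = J$ of $SU(3, R)$ holds if and only if it holds for each projection $A_i \in M(3, R_i)$. Consequently, $SU(3, R) \cong \prod_i SU(3, R_i)$. Likewise the relation $t \bar{t} = u + \bar{u}$ is componentwise, so $\mathcal{A}(R) \cong \prod_i \mathcal{A}(R_i)$, and an element of $R$ is a unit if and only if every component is a unit in the corresponding $R_i$.

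Next, given a special unitary completable vector $(a, b, c) \in R^3$, I would fix a matrix $A \in SU(3, R)$ whose first row is $(a, b, c)$, and then project it to obtain matrices $A_i \in SU(3, R_i)$ whose first rows are $(a_i, b_i, c_i)$, the projections of $(a, b, c)$. Each such vector is then special unitary completable over $R_i$, so the $(SSR_1)$ hypothesis on $R_i$ provides a pair $(z_1^{(i)}, z_2^{(i)}) \in \mathcal{A}(R_i)$ with $a_i + b_i z_1^{(i)} + c_i z_2^{(i)} \in R_i^*$.

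Finally, I would assemble $z_1 := (z_1^{(i)})_{i \in \Gamma}$ and $z_2 := (z_2^{(i)})_{i \in \Gamma}$ in $R$. By the identification $\mathcal{A}(R) \cong \prod_i \mathcal{A}(R_i)$ noted above, the pair $(z_1, z_2)$ lies in $\mathcal{A}(R)$. The element $a + b z_1 + c z_2$ has $i$-th component $a_i + b_i z_1^{(i)} + c_i z_2^{(i)} \in R_i^*$ for every $i$, hence is a unit in $R$. This is the witness required by $(SSR_1)$.

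There is no substantive obstacle here; the statement is essentially a formal check that the data defining $(SSR_1)$ (the completion of a vector to an $SU(3)$-matrix, the equation $t\bar{t} = u + \bar{u}$, and unit-hood) are all preserved and detected componentwise under arbitrary products. The only point worth a brief explicit remark is that unithood in an infinite product is still componentwise, which guarantees no difficulty arises from $\Gamma$ being infinite.
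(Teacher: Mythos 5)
Your proposal is correct and follows essentially the same route as the paper: identify $SU(3,R)$, $\mathcal{A}(R)$, and unithood componentwise, project the completable vector, apply $(SSR_1)$ in each factor, and assemble the resulting witnesses. The extra remarks on the componentwise nature of the defining relations are a slightly more explicit version of what the paper leaves implicit.
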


\begin{proof}
    Since $R = \prod_{i \in \Gamma} R_i$, it follows that
    \[
        SU(3, R) = \prod_{i \in \Gamma} SU(3, R_i).
    \]
    For any $r \in R$, let $r_i \in R_i$ denote its $i$-th component.  
    If $(a, b, c) \in R^3$ is special unitary completable, then each $(a_i, b_i, c_i) \in R_i^3$ is also special unitary completable.
    Thus, for each $i$, there exist $(z_1^{(i)}, z_2^{(i)}) \in \mathcal{A}(R_i)$ such that $a_i + b_i z_1^{(i)} + c_i z_2^{(i)} \in R_i^*$. 
    Define $z_1 = (z_1^{(i)})_{i \in \Gamma} \quad \text{and} \quad z_2 = (z_2^{(i)})_{i \in \Gamma}$. 
    Then $(z_1, z_2) \in \mathcal{A}(R)$, and $a + b z_1 + c z_2 \in R^*$, as required.
\end{proof}

%%%%%%%%%%%%%%%%%%%%%%%%%%%%%%%%%%%%%%%%%%%%%%%%%%%%%%%%%%%%%%
%Section - $\theta$-complete Ring
%%%%%%%%%%%%%%%%%%%%%%%%%%%%%%%%%%%%%%%%%%%%%%%%%%%%%%%%%%%%%%

\section{\texorpdfstring{$\theta$}{theta}-complete Ring}\label{sec:theta-complete ring}

Let $R$ be a commutative ring with unity with an involution $\theta$.
As in Section~\ref{sec:intro}, we introduce the set
\[
\mathcal{B}_1(R) = \{ u \in R \mid \text{there exists } t \in R \text{ such that } (t,u) \in \mathcal{A}(R)^{*}\}.
\]
If $\mathcal{A}(R)^* \neq \emptyset$ (equivalently, $\mathcal{B}_1(R) \neq \emptyset$), then for any $k \in \mathbb{N}$, define
\[
\mathcal{B}_k(R) = \{ u \in R \mid \text{there exist } u_1, \dots, u_k \in \mathcal{B}_1(R) \text{ such that } u = u_1 \dots u_k \}.
\]
Otherwise, set $\mathcal{B}_k(R) = \emptyset$.  
Now, define the sets  
\[
\mathcal{C}_{\text{even}}(R) = \bigcup_{\ell \in \mathbb{N}} \mathcal{B}_{2\ell}(R)
\quad \text{and} \quad
\mathcal{C}_{\text{odd}}(R) = \bigcup_{\ell \in \mathbb{N}} \mathcal{B}_{2\ell - 1}(R).
\]
For the sake of completeness (though this will not be needed later) we define, for each $k \in \mathbb{N}$,  
\[
\mathcal{C}_k(R) = \bigcup_{\ell \in \mathbb{N}} \mathcal{B}_{k\ell}(R).
\]
It is evident that  
\[
\mathcal{C}_{\text{even}}(R) = \mathcal{C}_2(R).
\]

\begin{rmk}
    \normalfont
    The following properties follow immediately from the definitions:
    \begin{enumerate}[(a)]
        \item If $2 \in R^*$, then $\mathcal{B}_k(R) \neq \emptyset$ for all $k \in \mathbb{N}$, since $(2,2) \in \mathcal{A}(R)^*$. Consequently, $\mathcal{C}_k(R) \neq \emptyset$ for all $k \in \mathbb{N}$.
        
        \item \label{rmk:(b)} If $u$ is in $\mathcal{B}_k(R)$, then so are $\bar{u}, u^{-1}$ and $a \bar{a} u \ (a \in R^*)$.
        
        \item If $\mathcal{A}(R) \neq \emptyset$, then each $\mathcal{C}_k(R)$ forms a subgroup of the multiplicative group $R^*$ generated by $\mathcal{B}_k(R)$. In particular, $\mathcal{C}_{\text{even}}(R)$ is a subgroup of $R^*$.
        
        \item If $1 \in \mathcal{B}_k(R)$, then for all $m \in \mathbb{N}$, we have the inclusion  
        \[
        \mathcal{B}_m(R) \subset \mathcal{B}_{k+m}(R).
        \]
        In particular, since $1 \in \mathcal{B}_2(R)$ (provided $\mathcal{A}(R)^* \neq \emptyset$), it follows that  
        \[
        \mathcal{B}_1(R) \subset \mathcal{B}_3(R) \subset \mathcal{B}_5(R) \subset \cdots 
        \quad \text{and} \quad
        \mathcal{B}_2(R) \subset \mathcal{B}_4(R) \subset \mathcal{B}_6(R) \subset \cdots.
        \]
    \end{enumerate}
\end{rmk}

\begin{defn}
    \normalfont
    Let $R$ be a commutative ring with an involution $\theta$.
    \begin{enumerate}[(a)]
        \item We say that $R$ is \emph{$\theta$-complete} if
        \[
            \mathcal{C}_{\text{even}}(R) = R^*.
        \]
        
        \item We say that $R$ has \emph{finite $\mathcal{C}$-length} if there exists $k \in \mathbb{Z}^+$ such that
        \[
            \mathcal{C}_{\text{even}}(R) = \mathcal{B}_{2k}(R).
        \]
        The minimal such $k$ is called the \emph{$\mathcal{C}$-length} of $R$. If no such $k$ exists, we say that the $\mathcal{C}$-length of $R$ is $\infty$.
        
    \end{enumerate}
\end{defn}

%%%%%%%%%%%%%%%%%%%%%%%%%%%%%%%%%%%%%%%%%%%%%%%%%%%%%%%%%%%%%%

\begin{prop}\label{prop:field is theta-complete}
    \normalfont
    Every field is $\theta$-complete with $\mathcal{C}$-length $1$. 
\end{prop}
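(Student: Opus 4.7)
The plan is to prove directly that $\mathcal{B}_2(K) = K^*$: this immediately gives $\mathcal{C}_{\text{even}}(K) = K^* = R^*$ (so $K$ is $\theta$-complete) and forces the $\mathcal{C}$-length to equal $1$. Thus, for each $v \in K^*$, I need to exhibit a factorization $v = u_1 u_2$ with $u_1, u_2 \in \mathcal{B}_1(K)$.

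The key observation, which dictates the entire strategy, is that any $u \in K^*$ satisfying $u + \bar{u} \in \{0, 1\}$ automatically lies in $\mathcal{B}_1(K)$: one may take $t = 0$ or $t = 1$ respectively in the defining condition $t\bar{t} = u + \bar{u}$. My plan is therefore to arrange $u_1 + \bar{u}_1 = 0$ and $u_2 + \bar{u}_2 \in \{0, 1\}$ by a careful choice of $u_1$, so that $u_2 := v/u_1$ is forced to have the desired trace.

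I split into cases by characteristic. When $\mathrm{char}(K) \neq 2$, fix $\beta \in K_\theta^{-} \cap K^*$; such $\beta$ exists because $\theta$ is nontrivial, e.g.\ take $\beta = \alpha - \bar{\alpha}$ for any $\alpha \notin K_\theta$. Any element of $\beta K_\theta$ is then anti-fixed, hence has vanishing trace. If $v \in K_\theta^*$, set $u_1 = \beta,\ u_2 = v/\beta$; both lie in $K_\theta^{-} \cap K^*$. If $v \notin K_\theta$, a short computation shows that $f := (v - \bar{v})/\beta$ is fixed by $\theta$ and nonzero, hence $f \in K_\theta^*$; then $u_1 := \beta f$ and $u_2 := v/(\beta f)$ yield $u_1 + \bar{u}_1 = 0$ and $u_2 + \bar{u}_2 = (v - \bar{v})/(\beta f) = 1$. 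When $\mathrm{char}(K) = 2$, the subspace $K_\theta^{-}$ collapses onto $K_\theta$, so any element of $K_\theta^*$ already has trace $2u = 0$. If $v \in K_\theta^*$, take $u_1 = 1,\ u_2 = v$. Otherwise $t_v := v + \bar{v}$ is fixed by $\theta$ and nonzero (the latter since $v \notin K_\theta$), so $u_1 := t_v,\ u_2 := v/t_v$ yields the traces $0$ and $1$ respectively.

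I expect no serious obstacle: the only subtlety is ensuring that the chosen trace values actually lie in the image of the norm map $t \mapsto t\bar{t}$, and this is arranged by forcing the traces to be $0 = 0 \cdot \bar{0}$ or $1 = 1 \cdot \bar{1}$. The membership of the auxiliary elements $f$ and $t_v$ in $K_\theta^*$ is a one-line verification using $\bar{\beta} = -\beta$ and $\theta^2 = \mathrm{id}$ respectively, so the whole argument amounts to a four-subcase verification with no hidden analytic content.
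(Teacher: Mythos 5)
Your proof is correct and takes essentially the same route as the paper: for $v$ not fixed by $\theta$ you factor $v=(v-\bar v)\cdot v/(v-\bar v)$, a trace-zero unit times a trace-one unit (witnessed by $t=0$ and $t=1$ in $\mathcal{A}(K)^*$), and for $\theta$-fixed $v$ you use an anti-fixed unit $\beta$ and $v/\beta$, exactly as in the paper's two cases. Your separate characteristic-$2$ discussion and the explicit construction of $\beta\in K_\theta^{-}\cap K^*$ only spell out details the paper asserts in passing, so the argument is the same in substance.
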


\begin{proof}
    Let $R$ be a field and let $u \in R^*$. 
    Suppose that $u \neq \bar{u}$. Then the element $u - \bar{u}$ is invertible in $R$. 
    In this case, define 
    \[
        u_1 = u - \bar{u} \quad \text{and} \quad u_2 = u (u_1)^{-1}.
    \]
    Clearly, we have $u = u_1 u_2$ with $u_1, u_2 \in \mathcal{B}_1(R)$, as $(0, u_1), (1, u_2) \in \mathcal{A} (R)^{*}$, respectively. Thus, $u \in \mathcal{B}_2(R)$, as desired. 
    
    Now, suppose that $u = \bar{u}$. Choose any element $a \in R^{-}_{\theta} \cap R^*$ (such an element always exist in the field) and set
    \[
        u_1 = a \quad \text{and} \quad u_2 = u (a)^{-1}.
    \]
    Again, we observe that $u = u_1 u_2$ with $u_1, u_2 \in \mathcal{B}_1(R)$, as $(0, u_1), (0, u_2) \in \mathcal{A} (R)^{*}$, respectively. 
    Hence, we conclude that $u \in \mathcal{B}_2(R)$,  completing the proof.
\end{proof}

%%%%%%%%%%%%%%%%%%%%%%%%%%%%%%%%%%%%%%%%%%%%%%%%%%%%%%%%%%%%%%

\begin{prop}
    \normalfont
    Let $(R, \mathfrak{m}, k)$ be a local ring. Assume that $2 \in R^*$ and that $R_\theta^{-} \cap R^* \neq \emptyset$. Then $R$ is $\theta$-complete with $\mathcal{C}$-length $\leq 2$.
\end{prop}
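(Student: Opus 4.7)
The plan is to show that $\mathcal{C}_{\mathrm{even}}(R) = \mathcal{B}_4(R) = R^*$; since the inclusions $\mathcal{B}_4(R) \subset \mathcal{C}_{\mathrm{even}}(R) \subset R^*$ hold automatically, it suffices to establish the single inclusion $R^* \subset \mathcal{B}_4(R)$. This one inclusion yields simultaneously that $R$ is $\theta$-complete and that its $\mathcal{C}$-length is at most $2$.

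The main technical input is an auxiliary observation extracted from the field proof above: \emph{if $v \in R^*$ satisfies $v - \bar v \in R^*$, then $v \in \mathcal{B}_2(R)$}. I would verify this with the same factorization used over fields, namely $v = v_1 v_2$ with $v_1 := v - \bar v$ and $v_2 := v/v_1$. Here $v_1 \in R_\theta^{-} \cap R^*$ makes $(0, v_1) \in \mathcal{A}(R)^*$, while $v_2 \in R^*$ combined with $v_2 + \overline{v_2} = 1$ makes $(1, v_2) \in \mathcal{A}(R)^*$, so both factors lie in $\mathcal{B}_1(R)$.

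With this lemma in hand, I would split into cases according to the local structure. Since $R$ is local with $2 \in R^*$, the identity $(u - \bar u) + (u + \bar u) = 2u$ forces, for every $u \in R^*$, at least one of $u - \bar u$ and $u + \bar u$ to be a unit. In the first case, the auxiliary observation immediately gives $u \in \mathcal{B}_2(R) \subset \mathcal{B}_4(R)$; the second inclusion holds because $1 \in \mathcal{B}_2(R)$ (which is available since $\mathcal{A}(R)^* \neq \emptyset$ by the choice of any $a \in R_\theta^{-} \cap R^*$). In the second case, fix such an $a$ and write the symmetric factorization $u = (ua) \cdot a^{-1}$. One computes $ua - \overline{ua} = (u + \bar u) a \in R^*$ and $a^{-1} - \overline{a^{-1}} = 2 a^{-1} \in R^*$ (this is the only place where $2 \in R^*$ is used decisively), so the auxiliary observation applied to each factor gives $ua \in \mathcal{B}_2(R)$ and $a^{-1} \in \mathcal{B}_2(R)$, whence $u \in \mathcal{B}_2(R) \cdot \mathcal{B}_2(R) = \mathcal{B}_4(R)$.

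The principal obstacle is to land in $\mathcal{B}_4(R)$ rather than the more easily reached $\mathcal{B}_3(R)$. Without assuming $2$ is a norm, one cannot place $1$ inside $\mathcal{B}_1(R)$, so an odd-length factorization cannot be padded to even length by inserting a trivial factor. The symmetric trick of multiplying by $a$ and compensating by $a^{-1}$, each of which separately admits a $\mathcal{B}_2$-decomposition precisely because $2 \in R^*$, is exactly what delivers a product of four elements of $\mathcal{B}_1(R)$ in one stroke and overcomes this obstruction.
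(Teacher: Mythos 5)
Your argument is correct and takes essentially the same route as the paper: the same dichotomy (in a local ring with $2 \in R^*$, at least one of $u - \bar u$ and $u + \bar u$ is a unit), the same two-factor decomposition when $u - \bar u \in R^*$, and an explicit four-factor decomposition in the remaining case. The only difference is cosmetic: the paper writes $u = a \cdot \frac{u+\bar u}{2a} \cdot \frac{u}{u+\bar u} \cdot 2$ directly with pairs $(0,a), (0,\tfrac{u+\bar u}{2a}), (1,\tfrac{u}{u+\bar u}), (2,2) \in \mathcal{A}(R)^*$, while you obtain an equivalent factorization by splitting $u = (ua)\cdot a^{-1}$ and applying your auxiliary lemma to each factor.
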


\begin{proof}
    Let $u \in R^*$. 
    Suppose that $u - \bar{u} \not \in \mathfrak{m}$. Then it is invertible in $R$. 
    In this case, define 
    \[
        u_1 = u - \bar{u} \quad \text{and} \quad u_2 = u (u_1)^{-1}.
    \]
    Clearly, we have $u = u_1 u_2$ with $u_1, u_2 \in \mathcal{B}_1(R)$, as $(0, u_1), (1, u_2) \in \mathcal{A} (R)^{*}$, respectively. Hence, we have $u \in \mathcal{B}_2(R)$, as desired. 
    
    Now, suppose that $u - \bar{u} \in \mathfrak{m}$. Then we must have that $u + \bar{u}$ is invertible in $R$. Choose any element $a \in R^{-}_{\theta} \cap R^*$ and set
    \[
        u_1 = a, \quad u_2 = \frac{u + \bar{u}}{2a}, \quad u_3 = \frac{u}{u + \bar{u}} \quad \text{and} \quad u_4 = 2.
    \]
    We observe that $u = u_1 u_2 u_3 u_4$ with $u_1, u_2, u_3, u_4 \in \mathcal{B}_1(R)$, as $(0, u_1), (0, u_2), (1, u_3), (2,2) \in \mathcal{A} (R)^{*}$, respectively. 
    Hence, we conclude that $u \in \mathcal{B}_4(R)$,  completing the proof.
\end{proof}

%%%%%%%%%%%%%%%%%%%%%%%%%%%%%%%%%%%%%%%%%%%%%%%%%%%%%%%%%%%%%%

\begin{prop}
    \normalfont
    Let $R$ be a semi-local ring with exactly two maximal ideals $\mathfrak{m}_1$ and $\mathfrak{m}_2$. Suppose that $2 \in R^*$, and $R_\theta^{-} \cap R^* \neq \emptyset$. Further, assume that $\bar{\mathfrak{m}}_1 = \mathfrak{m}_2$. Then $R$ is $\theta$-complete with $\mathcal{C}$-length $\leq 2$.
\end{prop}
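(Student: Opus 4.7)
The plan is to mimic the local-ring argument from the preceding proposition, but first to extract from the hypothesis $\bar{\mathfrak{m}}_1 = \mathfrak{m}_2$ a dichotomy for the element $u - \bar{u}$, where $u \in R^*$ is arbitrary.

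First I would observe that for any $r \in R$ and any maximal ideal $\mathfrak{m} \in \mathrm{Max}(R)$, we have $r \in \mathfrak{m}$ if and only if $\bar{r} \in \bar{\mathfrak{m}}$. Applying this with $r = u - \bar{u}$, and using $\overline{u - \bar{u}} = -(u - \bar{u})$ together with $\bar{\mathfrak{m}}_1 = \mathfrak{m}_2$, one obtains the equivalence
\[
    u - \bar{u} \in \mathfrak{m}_1 \iff u - \bar{u} \in \mathfrak{m}_2.
\]
Hence $u - \bar{u}$ lies either in both maximal ideals (i.e., in the Jacobson radical of $R$) or in neither (i.e., it is a unit). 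This is the key structural observation that makes the two-maximal-ideal case reduce to the local one.

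In the first case, where $u - \bar{u} \in R^*$, I would set $u_1 = u - \bar{u}$ and $u_2 = u\,u_1^{-1}$, exactly as in the local proposition. One checks $(0, u_1), (1, u_2) \in \mathcal{A}(R)^*$, so $u_1, u_2 \in \mathcal{B}_1(R)$, giving $u \in \mathcal{B}_2(R) \subset \mathcal{B}_4(R)$.

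In the remaining case, $u - \bar{u}$ lies in the Jacobson radical, whence $u + \bar{u} = 2u - (u - \bar{u}) \in R^*$ (since $2, u \in R^*$). Fix any $a \in R_\theta^{-} \cap R^*$ and set
\[
    u_1 = a,\qquad u_2 = \frac{u + \bar{u}}{2a},\qquad u_3 = \frac{u}{u + \bar{u}},\qquad u_4 = 2.
\]
One verifies that $(0, u_1), (0, u_2), (1, u_3), (2, 2) \in \mathcal{A}(R)^*$, so each $u_i \in \mathcal{B}_1(R)$, and the product telescopes to $u$. Thus $u \in \mathcal{B}_4(R)$, proving $\mathcal{C}_{\text{even}}(R) = R^*$ with $\mathcal{C}$-length at most $2$. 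The only nontrivial step is the dichotomy derived in the first paragraph; the rest is a direct transcription of the local-ring argument.
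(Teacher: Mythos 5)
Your proposal is correct and follows essentially the same route as the paper: both arguments reduce to the preceding local-ring proposition by observing that $\bar{\mathfrak{m}}_1 = \mathfrak{m}_2$ forces $u - \bar{u}$ to lie either in $R^*$ or in $\mathfrak{m}_1 \cap \mathfrak{m}_2 = \mathrm{Jac}(R)$ (whence $u + \bar{u} \in R^*$), and then apply the same two- or four-factor decompositions. Your write-up merely makes the dichotomy and the $\mathcal{A}(R)^*$ verifications explicit where the paper cites the previous proof.
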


\begin{proof}
    Let $u \in R^*$.  
    If $u - \bar{u} \in R^*$, we proceed exactly as in the proof of the previous proposition.  
    Now, suppose $u - \bar{u} \in \mathfrak{m}_1 \cup \mathfrak{m}_2$.  
    By assumption, $u - \bar{u} \in \mathfrak{m}_1 \cap \mathfrak{m}_2 = \mathrm{Jac}(R)$.  
    In that case, $u + \bar{u} \in R^*$.  
    Thus, we are again in the situation of the previous theorem, and the same argument applies.
\end{proof}

%%%%%%%%%%%%%%%%%%%%%%%%%%%%%%%%%%%%%%%%%%%%%%%%%%%%%%%%%%%%%%

\begin{prop} 
    \normalfont
    Let $\Gamma$ be a fixed index set, and for each $i \in \Gamma$, let $R_i$ be a ring equipped with an involution $\theta_i$. Suppose that each $R_i$ is $\theta_i$-complete with $\mathcal{C}$-length $k_i$. The the product ring $R = \prod_{i \in \Gamma} R_i$, equipped with the componentwise involution $\theta = \prod_{i \in \Gamma} \theta_i$, is $\theta$-complete with $\mathcal{C}$-length $k = \text{Sup}_{i \in \Gamma} k_i$.
\end{prop}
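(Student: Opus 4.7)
The plan is to reduce the statement to the factor rings by showing that each set $\mathcal{B}_k$ decomposes as a direct product over the index set, and then chasing suprema. Since the involution $\theta = \prod_i \theta_i$ acts componentwise, the defining equation $t\bar{t} = u + \bar{u}$ of $\mathcal{A}(R)$ holds in $R$ if and only if it holds coordinatewise, so $\mathcal{A}(R) = \prod_i \mathcal{A}(R_i)$. Combined with the standard identity $R^* = \prod_i R_i^*$, this yields $\mathcal{A}(R)^* = \prod_i \mathcal{A}(R_i)^*$ and hence $\mathcal{B}_1(R) = \prod_i \mathcal{B}_1(R_i)$.

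Next I would establish by induction on $k$ that $\mathcal{B}_k(R) = \prod_i \mathcal{B}_k(R_i)$. The inclusion $\subseteq$ is immediate, as a length-$k$ product of $\mathcal{B}_1$-elements of $R$ projects to a length-$k$ product of $\mathcal{B}_1$-elements of each $R_i$. For the reverse inclusion, given $u = (u^{(i)})_i$ with factorizations $u^{(i)} = u_1^{(i)} \cdots u_k^{(i)}$ over $\mathcal{B}_1(R_i)$, assemble $u_j := (u_j^{(i)})_{i \in \Gamma}$, which lies in $\mathcal{B}_1(R) = \prod_i \mathcal{B}_1(R_i)$, and note $u = u_1 \cdots u_k$. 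This componentwise bookkeeping is the main (modest) technical step; once it is in place, everything else is formal.

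Now set $k := \sup_i k_i$ and assume first that $k < \infty$, so that $k = k_{i_0}$ for some $i_0$ (the $k_i$ being positive integers). For each $i$, since $R_i$ is $\theta_i$-complete we have $R_i^* = \mathcal{B}_{2k_i}(R_i) \neq \emptyset$, forcing $\mathcal{A}(R_i)^* \neq \emptyset$. By the remark in Section~\ref{sec:theta-complete ring}, this implies $1 \in \mathcal{B}_2(R_i)$ and therefore the ascending chain $\mathcal{B}_2(R_i) \subseteq \mathcal{B}_4(R_i) \subseteq \cdots$. In particular,
\[
R_i^* = \mathcal{B}_{2k_i}(R_i) \subseteq \mathcal{B}_{2k}(R_i) \subseteq R_i^*,
\]
so equality holds throughout. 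Applying the product decomposition from the previous paragraph gives $\mathcal{B}_{2k}(R) = \prod_i R_i^* = R^*$, proving that $R$ is $\theta$-complete with $\mathcal{C}$-length at most $k$.

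For minimality (still assuming $k < \infty$), any $k' < k = k_{i_0}$ satisfies $\mathcal{B}_{2k'}(R_{i_0}) \subsetneq R_{i_0}^*$ by the definition of $k_{i_0}$, and hence $\mathcal{B}_{2k'}(R) \subsetneq R^*$; so the $\mathcal{C}$-length is exactly $k$. The case $k = \infty$ is handled by the same observation: for every $m \in \mathbb{N}$ one can find $i$ with $k_i > m$, whence $\mathcal{B}_{2m}(R_i) \subsetneq R_i^*$ and so $\mathcal{B}_{2m}(R) \subsetneq R^*$, showing that the $\mathcal{C}$-length of $R$ is infinite. The proof is essentially a product-of-ideals argument, and I do not foresee a genuine obstacle beyond carefully separating the attained-supremum case from the unbounded case.
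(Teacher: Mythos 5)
Your componentwise decomposition $\mathcal{B}_m(R) = \prod_{i \in \Gamma} \mathcal{B}_m(R_i)$ (via $\mathcal{A}(R)^* = \prod_i \mathcal{A}(R_i)^*$) and the resulting argument for finite $k = \sup_i k_i$ are correct, and this is exactly the route the paper intends: its proof is omitted as being analogous to Proposition~\ref{prop:prod of SSR1}, i.e., the same coordinatewise assembly of witnesses, and your minimality argument for the attained supremum is fine.

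The one genuine defect is the final sentence. The case $\sup_i k_i = \infty$ is \emph{not} ``handled by the same observation'': what must be proved there is not merely that no finite $m$ gives $\mathcal{B}_{2m}(R) = R^*$, but that $R$ is $\theta$-complete at all, i.e.\ $\mathcal{C}_{\text{even}}(R) = R^*$ --- and this fails in general. By your own product formula, $\mathcal{C}_{\text{even}}(R) = \bigcup_{m} \prod_i \mathcal{B}_{2m}(R_i)$, so a unit $u = (u^{(i)})_i$ lies in $\mathcal{C}_{\text{even}}(R)$ only if a \emph{single} length $m$ works simultaneously in every coordinate. If the $k_i$ are unbounded, choose, for each $i$ with $k_i \geq 2$, an element $u^{(i)} \in R_i^* \setminus \mathcal{B}_{2(k_i-1)}(R_i)$ (possible by minimality of $k_i$), and $u^{(i)} = 1$ otherwise; the chain inclusions $\mathcal{B}_2(R_i) \subset \mathcal{B}_4(R_i) \subset \cdots$ that you already invoked force any admissible $m$ to satisfy $m \geq k_i$ for every such $i$, which is impossible. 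Hence $u \in R^* \setminus \mathcal{C}_{\text{even}}(R)$ and $R$ is not $\theta$-complete, so the stated conclusion cannot be rescued in that case (and your claim that the $\mathcal{C}$-length is then $\infty$ also does not follow, since $\mathcal{C}$-length compares $\mathcal{B}_{2k}(R)$ with $\mathcal{C}_{\text{even}}(R)$, not with $R^*$). The proposition should therefore be read with the $k_i$ bounded, in which case the supremum is attained and your argument is complete; either add that hypothesis explicitly or replace the last sentence by the observation above.
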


\begin{proof}
    The proof is similar to that of Proposition~\ref{prop:prod of SSR1} and is therefore omitted.
\end{proof}

%%%%%%%%%%%%%%%%%%%%%%%%%%%%%%%%%%%%%%%%%%%%%%%%%%%%%%%%%%%%%%

\begin{prop}\label{prop:T=H=H'}
    \normalfont
    Let $R$ be a $\theta$-complete ring. Then 
    \[
        T_{\sigma}(3, R) = H_{\sigma}(3, R) = H'_{\sigma}(3, R).
    \]
\end{prop}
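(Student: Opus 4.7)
The plan is to start with the trivial chain of inclusions $H'_\sigma(3,R) \subseteq H_\sigma(3,R) \subseteq T_\sigma(3,R)$, which is immediate from $H'_\sigma(3,R) = H(3,R) \cap E'_\sigma(3,R)$, $H_\sigma(3,R) = H(3,R) \cap SU(3,R)$, and $H(3,R) \subseteq T(3,R)$. Once these are in place, the proposition reduces to the single reverse inclusion $T_\sigma(3,R) \subseteq H'_\sigma(3,R)$, i.e., showing that every generator $h(r)$ with $r \in R^*$ can be realized as a product of $w_+$-elements that a priori live in $E'_\sigma(3,R)$.

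The first step I would carry out is to record that $r \mapsto h(r) = \mathrm{diag}(r, \bar r r^{-1}, \bar r^{-1})$ is a group homomorphism $R^* \to T_\sigma(3,R)$; a direct diagonal computation gives $h(r)h(s) = h(rs)$. This reduces the task to verifying $h(v) \in H'_\sigma(3,R)$ for $v$ ranging over a generating set of the multiplicative group $R^*$.

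Next, I would invoke identity (H3). For any $(t_1,u_1),(t_2,u_2) \in \mathcal{A}(R)^*$, the element $h_+((t_1,u_1),(t_2,u_2)) = w_+(t_1,u_1)\,w_+(t_2,u_2)$ equals $h(u_1 \bar u_2^{-1})$ and, being a product of two $w_+$'s, it lies in $E'_\sigma(3,R)$; being diagonal, it therefore lies in $H'_\sigma(3,R)$. Given any $v_1, v_2 \in \mathcal{B}_1(R)$, I would exploit part \ref{rmk:(b)} of the remark in Section~\ref{sec:theta-complete ring}: $\bar v_2^{-1}$ also lies in $\mathcal{B}_1(R)$. Hence there exist $t_1, t_2 \in R$ with $(t_1, v_1), (t_2, \bar v_2^{-1}) \in \mathcal{A}(R)^*$, and applying (H3) with $u_1 = v_1$, $u_2 = \bar v_2^{-1}$ produces $h(v_1 v_2) \in H'_\sigma(3,R)$. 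Thus $h(\mathcal{B}_2(R)) \subseteq H'_\sigma(3,R)$.

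For the final step, I would combine this with the homomorphism property and the $\theta$-completeness hypothesis. Any $r \in R^* = \mathcal{C}_{\text{even}}(R)$ admits a factorization $r = v_1 v_2 \cdots v_{2k}$ with each $v_i \in \mathcal{B}_1(R)$, and then
\[
h(r) = h(v_1 v_2)\, h(v_3 v_4) \cdots h(v_{2k-1} v_{2k}) \in H'_\sigma(3,R),
\]
each factor lying in $H'_\sigma(3,R)$ by the previous step. This closes the loop $T_\sigma(3,R) \subseteq H'_\sigma(3,R) \subseteq H_\sigma(3,R) \subseteq T_\sigma(3,R)$. I do not expect a genuine obstacle: the only substantive input is the pairing $u_2 \leftrightarrow \bar u_2^{-1}$ needed to convert a product $v_1 v_2$ into the specific shape $u_1 \bar u_2^{-1}$ appearing in (H3), and this is precisely what the closure of $\mathcal{B}_1(R)$ under $\theta$ and inversion delivers; everything else is bookkeeping.
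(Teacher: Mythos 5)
Your proposal is correct and follows essentially the same route as the paper: both reduce to $T_\sigma(3,R)\subseteq H'_\sigma(3,R)$, use identity (\ref{H3}) together with part (b) of the remark (closure of $\mathcal{B}_1(R)$ under $\theta$ and inversion) to get $h(\mathcal{B}_2(R))\subseteq H'_\sigma(3,R)$, and then factor any $r\in\mathcal{C}_{\text{even}}(R)=R^*$ into $\mathcal{B}_2$-pieces and use multiplicativity of $h$. Your write-up merely spells out in more detail the substitution $u_2=\bar v_2^{-1}$ that the paper leaves implicit.
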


\begin{proof}
    Since $H'_\sigma(3, R) \subset H_\sigma(3, R) \subset T_\sigma(3, R)$, it suffices to show that $T_\sigma(3, R) \subset H'_\sigma(3, R)$. By (\ref{H3}) and part~\ref{rmk:(b)} of the remark preceding Proposition~\ref{prop:field is theta-complete}, we know that for every $r \in \mathcal{B}_2(R)$, we have $h(r) \in H'_\sigma(3, R)$. 

    Now, since $R$ is $\theta$-complete, for any $r \in R^*$ there exists $k \in \mathbb{Z}^+$ such that $r \in \mathcal{B}_{2k}(R)$. By definition of $\mathcal{B}_{2k}(R)$, we can express $r$ as $r = r_1 \cdots r_k$ with each $r_i \in \mathcal{B}_2(R)$. It follows that
    \[
        h(r) = h(r_1) \cdots h(r_k),
    \]
    and since each $h(r_i) \in H'_\sigma(3, R)$, we conclude that $h(r) \in H'_\sigma(3, R)$. Thus, $T_\sigma(3, R) \subset H'_\sigma(3, R)$, completing the proof.
\end{proof}

%%%%%%%%%%%%%%%%%%%%%%%%%%%%%%%%%%%%%%%%%%%%%%%%%%%%%%%%%%%%%%

\begin{thm}\label{thm:T is as unitriangular}
    \normalfont
    Let $R$ be a $\theta$-complete ring with $\mathcal{C}$-length $k$. Then
    \[
        T_{\sigma}(3, R) \subset \left( U^{+}_\sigma (3, R) \, U^{-}_\sigma (3, R) \right)^{k+1} \quad \text{and} \quad T_{\sigma}(3, R) \subset \left( U^{-}_\sigma (3, R) \, U^{+}_\sigma (3, R) \right)^{k+1}.
    \]
\end{thm}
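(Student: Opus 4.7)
The plan is to prove the first containment $T_\sigma(3,R) \subset (U^+_\sigma(3,R) \, U^-_\sigma(3,R))^{k+1}$ by induction on the $\mathcal{C}$-length $k$; the second containment then follows by conjugating with $w(1)$, since by (HW3), (W1), (W2) this conjugation fixes $T_\sigma(3,R)$ and interchanges $U^+_\sigma(3,R)$ with $U^-_\sigma(3,R)$. By Proposition~\ref{prop:T=H=H'}, every element of $T_\sigma(3,R)$ has the form $h(r)$ for some $r \in R^* = \mathcal{B}_{2k}(R)$; write $r = u_1 u_2 \cdots u_{2k}$ with $u_i \in \mathcal{B}_1(R)$, and set $v_i := \bar u_{2i}^{-1} \in \mathcal{B}_1(R)$ (using closure of $\mathcal{B}_1$ under $u \mapsto \bar u^{-1}$, from part~(b) of the remark preceding Proposition~\ref{prop:field is theta-complete}). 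Then $r = \prod_{i=1}^k (u_{2i-1} \bar v_i^{-1})$, and identity (H3) yields $h(u_{2i-1}\bar v_i^{-1}) = w_+(t_i, u_{2i-1}) \, w_+(s_i, v_i)$ for suitable $t_i, s_i$ with $(t_i, u_{2i-1}), (s_i, v_i) \in \mathcal{A}(R)^*$, whence $h(r) = \prod_{i=1}^k w_+(t_i, u_{2i-1}) \, w_+(s_i, v_i)$.

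For the base case $k = 1$: expanding each $w_+(t,u) = x_+(t,u) \, x_-(-tu^{-1},\bar u^{-1}) \, x_+(tu^{-1}\bar u, u)$ in the product $h(r) = w_+(t_1, u_1) \, w_+(s_1, v_1)$ and merging the two adjacent $x_+$ factors via the group law on $\mathcal{A}(R)$ places $h(r)$ in $(U^+_\sigma U^-_\sigma)^2 \, U^+_\sigma$, a length-$5$ expression. The hard step is to further reduce this to $(U^+_\sigma U^-_\sigma)^2$ (length $4$): I exhibit parameters $(\alpha_j, \beta_j) \in \mathcal{A}(R)$ for $j = 1, 2, 3, 4$ with $h(r) = x_+(\alpha_1, \beta_1) \, x_-(\alpha_2, \beta_2) \, x_+(\alpha_3, \beta_3) \, x_-(\alpha_4, \beta_4)$ by directly solving the resulting $3 \times 3$ matrix equations in $SU(3, R)$, where the existence of a consistent solution is guaranteed by the $\mathcal{A}(R)^*$-data provided by $u_1, v_1 \in \mathcal{B}_1(R)$ (namely, the existence of $t_{u_1}, t_{v_1} \in R$ with $(t_{u_1}, u_1), (t_{v_1}, v_1) \in \mathcal{A}(R)^*$).

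For the inductive step, decompose $r = r' \cdot r_k$ with $r' \in \mathcal{B}_{2(k-1)}$ and $r_k \in \mathcal{B}_2$. By the induction hypothesis $h(r') \in (U^+_\sigma U^-_\sigma)^k$, and by the base case $h(r_k) \in (U^+_\sigma U^-_\sigma)^2$, so a priori $h(r) = h(r') \, h(r_k) \in (U^+_\sigma U^-_\sigma)^{k+2}$. To trim one $(U^+_\sigma U^-_\sigma)$ pair and land in $(U^+_\sigma U^-_\sigma)^{k+1}$, I exploit that $h(r') \, h(r_k) = h(r_k) \, h(r')$ (since $T_\sigma(3,R)$ is abelian) and pull the torus element $h(r_k)$ leftward through the rightmost $(U^+_\sigma U^-_\sigma)$ pair of the $h(r')$-expansion using identities (H1), (H2), (HW2) to re-parametrize the $x_\pm$ factors, thereby merging two adjacent $U^+_\sigma$ (or $U^-_\sigma$) factors via the $\mathcal{A}(R)$-group law. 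The main obstacle is the base case: an explicit length-$4$ factorization of $h(r)$ for $r \in \mathcal{B}_2$, a matrix-level computation using the particular $\mathcal{A}(R)^*$-structure provided by $u, v \in \mathcal{B}_1(R)$; once this is established, the inductive step is careful bookkeeping with the conjugation identities.
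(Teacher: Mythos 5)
Your overall strategy mirrors the paper's (reduce to $h(r)$ with $r$ a product of elements of $\mathcal{B}_1(R)$, treat the $\mathcal{B}_2$ case explicitly, then iterate), but the proof has a genuine gap precisely where the theorem's content lies: the base case. You assert that the length-$5$ expression coming from $h(r)=w_{+}(t_1,u_1)\,w_{+}(s_1,v_1)$ can be shortened to a length-$4$ factorization ``by directly solving the resulting $3\times 3$ matrix equations,'' with solvability ``guaranteed by the $\mathcal{A}(R)^*$-data,'' but no parameters are exhibited and no solvability argument is given. This is not automatic: a generic element of $\bigl(U^{+}_\sigma U^{-}_\sigma\bigr)^2U^{+}_\sigma$ need not lie in $\bigl(U^{+}_\sigma U^{-}_\sigma\bigr)^2$, and the reduction works only because $r=u_1u_2$ with both factors in $\mathcal{B}_1(R)$. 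The paper's proof consists exactly of this construction: it multiplies $h(r)$ on the right by $x_{+}(z_1,z_2)\,x_{-}(x_1,x_2)\,x_{+}(y_1,y_2)^{-1}$ with $z_2,y_2$ chosen to be the relevant $\mathcal{B}_1$-elements and with a rather delicate middle parameter (note the correction term in $x_2$ needed so that $(x_1,x_2)\in\mathcal{A}(R)$), tracks the first row, and then invokes the unitarity relations to conclude that the resulting matrix with first row $(1,0,0)$ is lower unitriangular. Until you actually produce such parameters and verify they lie in $\mathcal{A}(R)$, the base case -- and hence the theorem -- is unproved.

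There is also a quantitative problem in your inductive step as described. Writing $h(r)=h(r')h(r_k)$ with $h(r')\in\bigl(U^{+}_\sigma U^{-}_\sigma\bigr)^{k}$ and $h(r_k)\in\bigl(U^{+}_\sigma U^{-}_\sigma\bigr)^{2}$ gives $2k+4$ alternating factors, so you must delete two factors, not one. Pulling the torus element $h(r_k)$ through unipotent factors via (H1), (H2) preserves each of $U^{+}_\sigma$ and $U^{-}_\sigma$, so it never creates an adjacent same-sign pair; and a single merge via the $\mathcal{A}(R)$-group law only shortens the word by one factor. The step can be repaired -- e.g.\ insert the base-case factorization of $h(r_k)$ in its $\bigl(U^{-}_\sigma U^{+}_\sigma\bigr)^{2}$ form (available by $w(1)$-conjugation) between the trailing $U^{-}_\sigma$ factor of the inductive factorization and a re-parametrized tail, so that merges occur at both junctions and the count drops from $2k+4$ to $2k+2$ -- but as written the bookkeeping does not close, and in any case the whole argument still hinges on the unproved explicit $\mathcal{B}_2$ computation. (By contrast, the paper avoids this bookkeeping by running all $k$ rounds in one telescoping construction in which consecutive rounds share a $U^{+}_\sigma$ factor.)
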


\begin{proof}
    Let $h(r) \in T_\sigma(3, R)$ for some $r \in R^*$. Since $R$ is $\theta$-complete, there exists $k \in \mathbb{Z}^+$ such that $r \in \mathcal{B}_{2k}(R)$. Choose the minimal such $k$. By definition of $\mathcal{B}_{2k}(R)$, there exist elements $r_1, \dots, r_{2k} \in \mathcal{B}_1(R)$ such that
    \[
        r = r_1 r_2 \cdots r_{2k}.
    \]

    Since each $r_{2i} \in \mathcal{B}_1(R)$, it follows that $r_{2i}^{-1} \in \mathcal{B}_1(R)$. Therefore, there exists $(z_1^{(i)}, z_2^{(i)}) \in \mathcal{A}(R)^*$ such that $z_2^{(i)} = r_{2(k - i) + 2}^{-1}$, for each $i = 1, \dots, k$.
    Similarly, since each $r_{2i - 1} \in \mathcal{B}_1(R)$, there exists $(y_1^{(i)}, y_2^{(i)}) \in \mathcal{A}(R)^*$ such that $y_2^{(i)} = r_{2(k - i) + 1}$, for each $i = 1, \dots, k$.
    Define
    \[
        x_1^{(i)} := \frac{y_1^{(i)}}{y_2^{(i)}} - \frac{z_1^{(i)}}{z_2^{(i)}} \quad \text{and} \quad
        x_2^{(i)} := \left( \frac{1}{y_2^{(i)}} - \frac{1}{z_2^{(i)}} \right) 
                 - \frac{z_1^{(i)}}{z_2^{(i)}} \left( \frac{\bar{y}_1^{(i)}}{\bar{y}_2^{(i)}} - \frac{\bar{z}_1^{(i)}}{\bar{z}_2^{(i)}} \right).
    \]
    Then $(x_1^{(i)}, x_2^{(i)}) \in \mathcal{A}(R)$ for each $i = 1, \dots, k$.

    Now define $C^{(0)} := h(r)$. For each $i \in {1, \dots, k}$, inductively define
    \begin{align*}
        A^{(i)} &:= C^{(i-1)} \cdot x_{+}(z_1^{(i)}, z_2^{(i)}), \\
        B^{(i)} &:= A^{(i)} \cdot x_{-}(x_1^{(i)}, x_2^{(i)}), \\
        C^{(i)} &:= B^{(i)} \cdot x_{+}(y_1^{(i)}, y_2^{(i)})^{-1}.
    \end{align*}

    For each $i = 1, \dots, k$, define
    \[
        s_i := r_1 r_2 \cdots r_{2(k - i) + 1} r_{2(k - i) + 2}, \quad \text{and} \quad s_{k+1} := 1.
    \]
    A straightforward computation shows that for each $i = 1, \dots, k$, the matrices $A^{(i)}$, $B^{(i)}$, and $C^{(i)}$ take the form:
    \[
        A^{(i)} = \begin{pmatrix}
            s_i & s_i z_1^{(i)} & s_i z_2^{(i)} \\
            * & * & * \\
            * & * & *
        \end{pmatrix}, \quad
        B^{(i)} = \begin{pmatrix}
            s_{i+1} & s_{i+1} y_1^{(i)} & s_{i+1} y_2^{(i)} \\
            * & * & * \\
            * & * & *
        \end{pmatrix}, \quad
        C^{(i)} = \begin{pmatrix}
            s_{i+1} & 0 & 0 \\
            * & * & * \\
            * & * & *
        \end{pmatrix}.
    \]

    Observe that
    \[
        C^{(k)} = \begin{pmatrix}
            1 & 0 & 0 \\
            c_{21} & c_{22} & c_{23} \\
            c_{31} & c_{32} & c_{33}
        \end{pmatrix}.
    \]
    By condition (\ref{U'23}), we have $c_{23} = 0$; by (\ref{U'11}), it follows that $c_{22} = 1$; and from (\ref{U'33}), we obtain $c_{33} = 1$. Therefore, $C^{(k)} \in U_{\sigma}^{-}(3, R)$.

    Moreover, by construction,
    \[
        C^{(k)} \in C^{(k-1)} \cdot U^{+}_{\sigma}(3, R) \cdot U^{-}_{\sigma}(3, R) \cdot U^{+}_{\sigma}(3, R).
    \]
    Hence, by iterating this relation,
    \[
        C^{(k)} \in C^{(0)} \cdot \left(U^{+}_{\sigma}(3, R) \cdot U^{-}_{\sigma}(3, R) \cdot U^{+}_{\sigma}(3, R)\right)^k = C^{(0)} \cdot \left(U^{+}_{\sigma}(3, R) \cdot U^{-}_{\sigma}(3, R)\right)^k \cdot U^{+}_{\sigma}(3, R).
    \]
    Since $C^{(0)} = h(r)$ and $C^{(k)} \in U^{-}_{\sigma}(3, R)$, we conclude that
    \[
        h(r) \in \left(U^{-}_{\sigma}(3, R) \cdot U^{+}_{\sigma}(3, R)\right)^{k+1}
    \]
    for every $r \in R^*$. Conjugating both sides by $w(1)$, we then obtain
    \[
        h(r) \in \left(U^{+}_{\sigma}(3, R) \cdot U^{-}_{\sigma}(3, R)\right)^{k+1}
    \]
    for every $r \in R^*$. 
\end{proof}

\begin{cor}\label{cor:unitriangular for SU(3,R)}
    \normalfont
    Let $R$ be a $\theta$-complete ring with $\mathcal{C}$-length $k$. Suppose $R$ satisfies $(SSR_1)$ condition. Then 
    \begin{align*}
        SU(3, R) = E_\sigma(3, R) = E'_\sigma(3,R) 
        &= \left( U^{+}_{\sigma} (3, R) \, U^{-}_{\sigma} (3, R) \right)^{k+1} U^{+}_{\sigma} (3, R) \\
        &= \left( U^{-}_{\sigma} (3, R) \, U^{+}_{\sigma} (3, R) \right)^{k+1} U^{-}_{\sigma} (3, R).
    \end{align*}
\end{cor}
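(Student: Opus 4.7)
The plan is to combine Theorems~\ref{thm:SSR_1} and \ref{thm:T is as unitriangular} and then perform a short simplification of set products, using that the unipotent pieces are subgroups.

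First, I would invoke Theorem~\ref{thm:SSR_1}(b) to write
$$SU(3, R) = T_\sigma(3, R)\, U^-_\sigma(3, R)\, U^+_\sigma(3, R)\, U^-_\sigma(3, R).$$
Equations \eqref{H1} and \eqref{H2} show that $T_\sigma(3, R)$ normalizes each of $U^+_\sigma(3, R)$ and $U^-_\sigma(3, R)$, so the leading $T_\sigma$ can be commuted past the first $U^-_\sigma$, producing the equivalent form
$$SU(3, R) = U^-_\sigma(3, R)\, T_\sigma(3, R)\, U^+_\sigma(3, R)\, U^-_\sigma(3, R).$$
Next, Theorem~\ref{thm:T is as unitriangular} gives the inclusion $T_\sigma(3, R) \subset \bigl(U^-_\sigma(3, R)\, U^+_\sigma(3, R)\bigr)^{k+1}$, and substituting this produces
$$SU(3, R) \subset U^-_\sigma(3, R) \cdot \bigl(U^-_\sigma(3, R)\, U^+_\sigma(3, R)\bigr)^{k+1} \cdot U^+_\sigma(3, R)\, U^-_\sigma(3, R).$$

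The next step is purely formal: since $U^\pm_\sigma(3, R)$ are subgroups, the leading $U^-_\sigma$ merges into the first $U^-_\sigma$-factor of $\bigl(U^-_\sigma U^+_\sigma\bigr)^{k+1}$, and the $U^+_\sigma$ immediately to the right of this product merges into its last $U^+_\sigma$-factor. These two absorptions collapse the inclusion to
$$SU(3, R) \subset \bigl(U^-_\sigma(3, R)\, U^+_\sigma(3, R)\bigr)^{k+1} U^-_\sigma(3, R).$$
The companion inclusion $SU(3, R) \subset \bigl(U^+_\sigma(3, R)\, U^-_\sigma(3, R)\bigr)^{k+1} U^+_\sigma(3, R)$ follows by conjugating this one by $w(1) \in SU(3, R)$ and invoking \eqref{W1} and \eqref{W2} to interchange $U^+_\sigma$ with $U^-_\sigma$.

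Finally, each right-hand side is visibly a product of elements of the generating subgroups $U^\pm_\sigma(3, R)$ of $E'_\sigma(3, R)$, so each is contained in $E'_\sigma(3, R)$. Combined with the chain $E'_\sigma(3, R) \subset E_\sigma(3, R) \subset SU(3, R)$ from the definitions, this forces equality throughout and establishes the corollary. I do not anticipate any genuine obstacle: the heavy lifting resides in Theorems~\ref{thm:SSR_1} and \ref{thm:T is as unitriangular}. The one subtlety worth flagging is that one must start from the form $U^-_\sigma T_\sigma U^+_\sigma U^-_\sigma$ rather than from $T_\sigma U^-_\sigma U^+_\sigma U^-_\sigma$; the flanking $U^-_\sigma$ on the left of $T_\sigma$ is precisely what makes both absorptions work, and a naive substitution without this preparatory commutation would only yield the weaker bound $SU(3, R) \subset \bigl(U^-_\sigma U^+_\sigma\bigr)^{k+2} U^-_\sigma$.
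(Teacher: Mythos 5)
Your proof is correct and follows the same route as the paper, which simply derives the corollary by combining Theorem~\ref{thm:SSR_1} with Theorem~\ref{thm:T is as unitriangular}; you have merely made explicit the bookkeeping the paper leaves implicit, namely commuting $T_{\sigma}(3,R)$ past the first unipotent factor (legitimate by \eqref{H1}--\eqref{H2}), absorbing adjacent equal factors, conjugating by $w(1)$ for the mirrored form, and closing the circle of inclusions through $E'_\sigma(3,R)\subset E_\sigma(3,R)\subset SU(3,R)$. Your observation that a naive substitution would give length $2k+4$ rather than $2k+3$ is a fair point, but it does not constitute a deviation from the paper's approach.
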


\begin{proof}
    Immediate from Theorem~\ref{thm:SSR_1} and Theorem~\ref{thm:T is as unitriangular}.
\end{proof}

%%%%%%%%%%%%%%%%%%%%%%%%%%%%%%%%%%%%%%%%%%%%%%%%%%%%%%%%%%%%%%

Finally, we conclude this section by addressing the following question: under what conditions do $T_\sigma(\Phi, R)$ and $H'_\sigma(\Phi, R)$ coincide? This question has already been answered in the case where $\Phi_\rho \not\sim {}^2A_{2n}$; in such cases, the two subgroups coincide for any commutative ring $R$. However, when $\Phi_\rho \sim {}^2A_{2n} \ (n \geq 1)$, additional conditions are needed to ensure the subgroups coincide.

%%%%%%%%%%%%%%%%%%%%%%%%%%%%%%%%%%%%%%%%%%%%%%%%%%%%%%%%%%%%%%

\begin{prop}\label{prop:T=H=H' for general case}
    \normalfont
    Let $\Phi_\rho$ be a twisted root system of type ${}^2 A_{n} \ (n \geq 2), {}^2 D_{n} \ (n \geq 4), {}^2 E_6$, or ${}^3 D_4$.
    Let $R$ be a commutative ring with unity, and suppose there exists an automorphism $\theta: R \to R$ of order $2$ if $\Phi_\rho \not\sim {}^3D_4$, and of order $3$ if $\Phi_\rho \sim {}^3D_4$. 
    Assume further that $R$ is $\theta$-complete in the case where $\Phi_\rho \sim {}^2A_{2n}$. 
    Then
    \[
        T_\sigma(\Phi, R) = H_\sigma(\Phi, R) = H'_\sigma(\Phi, R).
    \]
\end{prop}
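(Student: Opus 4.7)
The only nontrivial case is $\Phi_\rho \sim {}^2A_{2n}$ with $n \geq 1$, since for the remaining types the equality $T_\sigma = H'_\sigma$ has already been noted earlier in the paper. Moreover, the identity $H_\sigma = T_\sigma$ is automatic in the simply connected setting because $H(\Phi, R) = T(\Phi, R)$. Thus the entire task is to prove the inclusion $T_\sigma(\Phi, R) \subseteq H'_\sigma(\Phi, R)$ when $\Phi \sim A_{2n}$ and $R$ is $\theta$-complete.

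My plan is to work in the matrix model $G = SL(2n+1, R)$ with $\sigma(X) = J\bar{X}^{-t} J^{-1}$ for the standard antidiagonal $J$. Under this identification a diagonal matrix $\tau = \mathrm{diag}(d_1, \ldots, d_{2n+1})$ lies in $T_\sigma$ exactly when $d_i = \bar d_{2n+2-i}^{-1}$ for all $i$, so $\tau$ is parameterized freely by $(d_1, \ldots, d_n) \in (R^*)^n$, with middle entry $d_{n+1} = \prod_{i=1}^n \bar d_i\, d_i^{-1}$ (which automatically satisfies $d_{n+1}\bar d_{n+1} = 1$). The $A_1^2$-type generators $h_{[\alpha_k]}(t) = h_{\alpha_k}(t)\, h_{\alpha_{2n+1-k}}(\bar t)$ for $k = 1, \ldots, n-1$ lie in $H'_\sigma$ by construction, and each of them acts on the diagonal torus by multiplying positions $k, k+1, 2n+1-k, 2n+2-k$ by $t, t^{-1}, \bar t, \bar t^{-1}$, respectively.

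Next I will peel off the outer coordinates one step at a time. Setting $s_k := (d_1 \cdots d_k)^{-1}$ and multiplying $\tau$ successively by $h_{[\alpha_k]}(s_k)$ for $k = 1, 2, \ldots, n-1$, an inductive check shows that after step $k$ the positions $1, \ldots, k$ and $2n+2-k, \ldots, 2n+1$ have been driven to $1$, while position $k+1$ carries $d_1 \cdots d_{k+1}$ and position $2n+1-k$ carries its $\sigma$-image $\overline{d_1 \cdots d_{k+1}}^{-1}$. After the last step only positions $n, n+1, n+2$ remain nontrivial, and writing $r := d_1 \cdots d_n$ the residual element takes the form $\mathrm{diag}(1, \ldots, 1, r, \bar r r^{-1}, \bar r^{-1}, 1, \ldots, 1)$; the middle entry simplifies to $\bar r r^{-1}$ since $d_{n+1} = \prod_{i=1}^n (\bar d_i / d_i) = \bar r / r$ by commutativity of $R$.

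This residual element is precisely $h(r) \in T_\sigma(3, R)$ sitting in the $SU(3, R)$ subgroup attached to the $A_2$-type simple class $[\alpha_n] = \{\alpha_n, \alpha_{n+1}\}$. Proposition~\ref{prop:T=H=H'}, applied to this embedded copy and invoking the $\theta$-completeness of $R$, places $h(r)$ inside $H'_\sigma(3, R) \subseteq H'_\sigma(\Phi, R)$; combined with the $A_1^2$-generators used in the reduction, this yields $\tau \in H'_\sigma(\Phi, R)$ and closes the argument. The only conceptual obstacle is Proposition~\ref{prop:T=H=H'} itself --- the rest is careful bookkeeping --- and this is precisely where $\theta$-completeness is indispensable: without it one cannot guarantee that an arbitrary $r \in R^*$ admits a factorization in $\mathcal{B}_2(R)$, which is exactly what is needed in order to realize $h(r)$ as a product of generators of the form $h_{[\alpha_n]}((\cdot,\cdot),(\cdot,\cdot))$.
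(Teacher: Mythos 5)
Your proposal is correct, and it rests on exactly the same key ingredient as the paper: reduce an arbitrary element of $T_\sigma$ to a product of rank-one twisted torus elements, observe that the $A_1^2$-factors lie in $H'_\sigma$ for free, and invoke Proposition~\ref{prop:T=H=H'} (via $\theta$-completeness) for the single $A_2$-class. The one genuine difference is in how the factorization of a torus element is obtained: the paper simply asserts that every $h \in H_\sigma(A_{2n}, R)$ can be written as $\prod_{[\alpha]} h_{[\alpha]}(t_{[\alpha]})$ and then treats the factors case by case, whereas you prove this factorization explicitly by working in the matrix model $SL(2n+1, R)$, parameterizing $T_\sigma$ by $(d_1,\dots,d_n)\in (R^*)^n$ and peeling off coordinates with the elements $h_{[\alpha_k]}(s_k)$, $s_k = (d_1\cdots d_k)^{-1}$, until only the $h(r)$-block with $r = d_1\cdots d_n$ remains in the embedded $SU(3,R)$ attached to $[\alpha_n]$. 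Your bookkeeping checks out (including the degenerate case $n=1$, where the peeling is vacuous and $\tau = h(d_1)$ directly), and since all the factors are diagonal and commute, the final assembly $\tau = h(r)\prod_k h_{[\alpha_k]}(s_k)^{-1} \in H'_\sigma(\Phi,R)$ is valid. What your route buys is self-containedness at the point the paper leaves to citation, at the mild cost of being tied to the standard representation of type $A_{2n}$ and to a choice of hermitian form $J$; note that the sign conventions in $J$ are immaterial for your computation, since conjugation by any antidiagonal matrix reverses the diagonal entries, and the identification of $E'_\sigma(\Phi_{[\alpha_n]}, R)$ with the paper's $SU(3,R)$ is the same identification the paper itself uses in its base-case lemmas.
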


\begin{proof}
    Recall that it is well known that $T(\Phi, R) = H(\Phi, R)$ for all commutative ring $R$ when $\Phi \sim A_n \ (n \geq 2), D_n \ (n \geq 4), E_6$. 
    It follows that $T_\sigma (\Phi, R) = H_\sigma (\Phi, R)$ for all $R$ whenever $\Phi_\rho \sim {}^2 A_{n} \ (n \geq 2), {}^2 D_{n} \ (n \geq 4), {}^2 E_6$ or ${}^3 D_4$. 
    Moreover, it is also known that $H_\sigma(\Phi, R) = H'_\sigma(\Phi, R)$ for all $R$ whenever $\Phi_\rho \not\sim {}^2 A_{2n}$. 
    Therefore, to prove the proposition, it suffices to consider the case $\Phi_\rho \sim {}^2 A_{2n}$ and show that
    \[
        H_\sigma(A_{2n}, R) \subseteq H'_\sigma(A_{2n}, R)
    \]
    under the assumption that $R$ is $\theta$-complete.
    
    Let $h \in H_\sigma(A_{2n}, R)$. Then $h$ can be expressed as a product
    \[
        h = \prod_{[\alpha] \in \Phi_\rho} h_{[\alpha]}(t_{[\alpha]}),
    \]
    where each $t_{[\alpha]} \in R_{[\alpha]}^*$.  
    If $[\alpha] \sim A_1^2$, then it is immediate that $h_{[\alpha]}(t_{[\alpha]}) \in H'_\sigma(A_{2n}, R)$.  
    If $[\alpha] \sim A_2$, then by Proposition~\ref{prop:T=H=H'}, we also have $h_{[\alpha]}(t_{[\alpha]}) \in H'_\sigma(A_{2n}, R)$.  
    Hence, $h \in H'_\sigma(A_{2n}, R)$, completing the proof. 
\end{proof}

%%%%%%%%%%%%%%%%%%%%%%%%%%%%%%%%%%%%%%%%%%%%%%%%%%%%%%%%%%%%%%

\begin{cor}\label{cor:E=E' for general case}
    \normalfont
    Let $\Phi$, $R$ and $\theta$ be as in Proposition~\ref{prop:T=H=H' for general case}. In addition, assume that $R$ is semi-local ring. Then 
    \[
        G_\sigma (\Phi, R) = E_\sigma (\Phi, R) = E'_\sigma (\Phi, R).
    \]
\end{cor}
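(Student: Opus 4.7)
The plan is to reduce the triple equality to the single inclusion $G_\sigma(\Phi, R) \subseteq E'_\sigma(\Phi, R)$, exploiting three ingredients in combination: the triangular factorization of $E'_\sigma$ supplied by Theorem~\ref{thm:triangular decomposition}, the toral identification $T_\sigma = H'_\sigma$ from Proposition~\ref{prop:T=H=H' for general case}, and a Bruhat/Gauss-type decomposition of the full group $G_\sigma$ over a semi-local ring. Since the chain $E'_\sigma(\Phi, R) \subseteq E_\sigma(\Phi, R) \subseteq G_\sigma(\Phi, R)$ is automatic, a single reverse inclusion collapses all three groups.

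First I would verify that the hypotheses of Theorem~\ref{thm:triangular decomposition} are satisfied. Semi-local rings satisfy $(SR_1)$ by classical work of Bass, and the subring $R_\theta$ (being a fixed subring under a finite order automorphism of a semi-local ring) also satisfies $(SR_1)$. The $\theta$-completeness hypothesis in the ${}^2A_{2n}$ case forces $\mathcal{B}_1(R) \neq \emptyset$ and hence $\mathcal{A}(R)^* \neq \emptyset$; together with the natural $\theta$-stability of $\mathrm{Max}(R)$ that arises in the twisted setup, Proposition~\ref{prop:semilocal has SSR_1} then supplies $(SSR_1)$. Theorem~\ref{thm:triangular decomposition} therefore applies and gives
\[
E'_\sigma(\Phi, R) = H'_\sigma(\Phi, R) \, U_\sigma(\Phi, R) \, U^{-}_\sigma(\Phi, R) \, U_\sigma(\Phi, R).
\]
Proposition~\ref{prop:T=H=H' for general case} upgrades this to $E'_\sigma(\Phi, R) = T_\sigma(\Phi, R) \, U_\sigma(\Phi, R) \, U^{-}_\sigma(\Phi, R) \, U_\sigma(\Phi, R)$, and in particular gives the inclusion $T_\sigma(\Phi, R) \subseteq E'_\sigma(\Phi, R)$.

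The crucial remaining input is a decomposition $G_\sigma(\Phi, R) = T_\sigma(\Phi, R) \cdot E'_\sigma(\Phi, R)$ over semi-local $R$, i.e.\ the assertion $G_\sigma = G'_\sigma$. This is the twisted analogue of the classical statement $G(\Phi, R) = T(\Phi, R) \cdot E(\Phi, R)$ for semi-local rings (due to Matsumoto, Abe, and Stein), and in the twisted Chevalley framework it is available from the references~\cite{EA1} and~\cite{SG&DM1}, which can be invoked directly. Granting this, every $g \in G_\sigma$ lies in $T_\sigma \cdot E'_\sigma = H'_\sigma \cdot E'_\sigma = E'_\sigma$, establishing $G_\sigma \subseteq E'_\sigma$ and hence the full equality $G_\sigma(\Phi, R) = E_\sigma(\Phi, R) = E'_\sigma(\Phi, R)$.

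The main obstacle is the last input: namely, citing (or, if needed, briefly reproving via a Gauss decomposition argument mirroring the proof of Theorem~\ref{thm:SSR_1}) the decomposition $G_\sigma = T_\sigma \cdot E'_\sigma$ over semi-local $R$. A secondary concern is the bookkeeping in Step~1, since Proposition~\ref{prop:semilocal has SSR_1} carries an explicit $\theta$-stability hypothesis on maximal ideals; one must either arrange for this to hold under the standing setup or appeal to a variant (analogous to the two-maximal-ideal case treated earlier in Section~\ref{sec:theta-complete ring}) when $\theta$ permutes maximal ideals nontrivially.
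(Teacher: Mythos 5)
Your core argument is the same as the paper's: the paper proves the corollary by combining Proposition~\ref{prop:T=H=H' for general case} with Proposition~5.8 of \cite{SG&DM1}, the latter being exactly the decomposition $G_\sigma(\Phi,R)=T_\sigma(\Phi,R)\,E'_\sigma(\Phi,R)$ (i.e.\ $G_\sigma=G'_\sigma$) over semi-local rings that you identify as the ``crucial remaining input'' and likewise cite rather than reprove. So the essential structure --- $T_\sigma\subseteq E'_\sigma$ plus $G_\sigma=T_\sigma E'_\sigma$ --- matches. The one criticism is your Step~1: routing the inclusion $T_\sigma(\Phi,R)\subseteq E'_\sigma(\Phi,R)$ through Theorem~\ref{thm:triangular decomposition} is both unnecessary and not justified under the stated hypotheses. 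The corollary only assumes the hypotheses of Proposition~\ref{prop:T=H=H' for general case} together with semi-locality; it does not grant $(SR_1)$ for $R_\theta$ nor $(SSR_1)$ for $R$, and your derivation of $(SSR_1)$ from Proposition~\ref{prop:semilocal has SSR_1} invokes a ``natural $\theta$-stability of $\mathrm{Max}(R)$'' that is not part of the setup ($\theta$ may permute maximal ideals nontrivially, and that proposition explicitly requires $\bar{\mathfrak m}=\mathfrak m$ for all $\mathfrak m$), a point you yourself flag. Fortunately none of this is needed: Proposition~\ref{prop:T=H=H' for general case} gives $T_\sigma(\Phi,R)=H'_\sigma(\Phi,R)$, and $H'_\sigma(\Phi,R)=H(\Phi,R)\cap E'_\sigma(\Phi,R)\subseteq E'_\sigma(\Phi,R)$ by definition, so the inclusion is immediate without any factorization theorem. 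With Step~1 replaced by this one-line observation, your proof coincides with the paper's.
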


\begin{proof}
    This follows immediately from Proposition~\ref{prop:T=H=H' for general case} and Proposition~5.8 in \cite{SG&DM1}.
\end{proof}

%%%%%%%%%%%%%%%%%%%%%%%%%%%%%%%%%%%%%%%%%%%%%%%%%%%%%%%%%%%%%%
%Section - Triangular and Unitriangular Factorization
%%%%%%%%%%%%%%%%%%%%%%%%%%%%%%%%%%%%%%%%%%%%%%%%%%%%%%%%%%%%%%

\section{Triangular and Unitriangular Factorization}

We are now ready to prove Theorem~\ref{thm:triangular decomposition} and Theorem~\ref{thm:unitriangular decomposition}. The main tool is a rank reduction theorem due to O. Tavgen~\cite{OT1} (cf. Theorem~\ref{thm:rank reduction theorem} of the present paper). In the original paper~\cite{OT1}, this theorem is stated for all (twisted) root systems excluding the case of ${}^2 A_{2n}$. However, as observed by A. Smolensky~\cite{AS}, the statement remains valid even for ${}^2 A_{2n}$. For the convenience of the reader, we include a full proof here.

Let $\Phi_\rho$ be a twisted root system of type ${}^2 A_{n} \ (n \geq 2), {}^2 D_n \ (n \geq 4), {}^2 E_6$, or ${}^3 D_4$. Fix a simple system $\Delta_\rho = \{ [\alpha_1], \dots, [\alpha_\ell] \}$ of $\Phi_\rho$. We rearrange the simple roots such that $[\alpha_1], \dots, [\alpha_\ell]$ represent the positions of the roots in the Dynkin diagram from the leftmost to the rightmost end.
For any $[\alpha] \in \Phi_\rho$, write its expansion in terms of the simple roots as
\[
    [\alpha] = \sum_{k=1}^\ell m_k([\alpha]) [\alpha_k],
\]
where $m_k([\alpha])$ denotes the coefficient of $[\alpha_k]$.

For each $i \in \{ 1, \dots, \ell \}$, define the subset $S_i = \{ [\alpha] \in \Phi_\rho \mid m_i([\alpha]) \geq 0 \}$. Then $S_i$ is a closed subset of $\Phi_\rho$ (cf. Section~\ref{subsec:parabolic}).
Let
\[
    \Phi_i := S_i^r = \{ [\alpha] \in \Phi_\rho \mid m_i ([\alpha]) = 0 \}
    \quad \text{and} \quad
    \Sigma_i := S_i^u = \{ [\alpha] \in \Phi_\rho \mid m_i ([\alpha]) > 0 \}.
\]
By Levi decomposition, we have
\[
    E'_\sigma (S_i, R) = E'_\sigma (\Phi_i, R) \ltimes E'_\sigma (\Sigma_i, R).
\]
Note that $\Phi_i$ is a root subsystem of $\Phi_\rho$. Let $\Phi_i^{\pm} = \Phi_i \cap \Phi^{\pm}$, and define
\[
    U_\sigma^{\pm} (\Phi_i, R) = E'_\sigma (\Phi_i^{\pm}, R) 
    \quad \text{and} \quad 
    U^{\pm}_\sigma (\Sigma_i, R) = E'_\sigma (\pm \Sigma_i, R).
\]

%%%%%%%%%%%%%%%%%%%%%%%%%%%%%%%%%%%%%%%%%%%%%%%%%%%%%%%%%%%%%%

\begin{lemma}\label{lemma:Levi decomposition}
    \normalfont
    The group $\langle U_\sigma^{\pm}(\Phi_i, R), U_\sigma^{\pm}(\Sigma_i, R) \rangle$ is a semidirect product of $U_\sigma^{\pm}(\Phi_i, R)$ and $U_\sigma^{\pm}(\Sigma_i, R)$, with $U_\sigma^{\pm}(\Sigma_i, R)$ being a normal subgroup (this applies to all four possible combinations of signs). Moreover, 
    \[
        U_\sigma^{+} (\Phi, R) = U_\sigma^{+} (\Phi_i, R) \ltimes U_\sigma^{+} (\Sigma_i, R)
        \quad \text{and} \quad
        U_\sigma^{-} (\Phi, R) = U_\sigma^{-} (\Phi_i, R) \ltimes U_\sigma^{-} (\Sigma_i, R).
    \]
\end{lemma}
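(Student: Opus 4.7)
The plan is to reduce the lemma to two combinatorial facts about the twisted root system and then apply the Chevalley commutator formulas in the usual way. First I would observe that the definitions give disjoint decompositions $\Phi_\rho^+ = \Phi_i^+ \sqcup \Sigma_i$ and $\Phi_\rho^- = \Phi_i^- \sqcup (-\Sigma_i)$, since every positive root $[\alpha]$ either has $m_i([\alpha]) = 0$ (placing it in $\Phi_i^+$) or $m_i([\alpha]) > 0$ (placing it in $\Sigma_i$), and symmetrically for negative roots. Second, $\Sigma_i$ is an ideal in the closed set $S_i$ in the sense of Section~\ref{subsec:parabolic}: if $[\alpha] \in \Sigma_i$ and $[\beta] \in \Phi_i$ with $[\alpha]+[\beta] \in \Phi_\rho$, then $m_i([\alpha]+[\beta]) = m_i([\alpha]) > 0$; the additional half-sum condition needed when $\Phi_\rho \sim {}^2A_{2n}$ goes through verbatim because $m_i$ is linear in each of the two summands. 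The analogous statement holds for $-\Sigma_i$ in $-S_i$.

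Next, I would establish normality of $U_\sigma^{\pm}(\Sigma_i, R)$ inside $\langle U_\sigma^{\pm}(\Phi_i, R), U_\sigma^{\pm}(\Sigma_i, R)\rangle$ for each of the four sign combinations. It suffices to conjugate a generator $x_{[\alpha]}(t)$ with $[\alpha] \in \pm\Sigma_i$ by a generator $x_{[\beta]}(s)$ with $[\beta] \in \pm\Phi_i$; the Chevalley commutator formulas for twisted root systems (as recorded in \cite{SG&DM1}) express the commutator as a product of $x_{[\gamma]}(\cdot)$ where $[\gamma]$ has the form $m[\alpha] + n[\beta]$ with $m, n \geq 1$ (including the possible half-sum term when the relevant type is $A_1^2$). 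The $i$-th coordinate of such a $[\gamma]$ is $m\, m_i([\alpha]) + n \cdot 0$, which has the same strict sign as $m_i([\alpha])$. Thus every conjugate lies in $U_\sigma^{\pm}(\Sigma_i, R)$, giving normality.

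The trivial intersection $U_\sigma^{\pm}(\Phi_i, R) \cap U_\sigma^{\pm}(\Sigma_i, R) = \{e\}$ then follows from the uniqueness of the expression of elements in $U_\sigma^{\pm}(\Phi, R)$ as ordered products of root elements (established for the elementary twisted Chevalley group in \cite{SG&DM1}), combined with the disjoint decomposition of $\Phi_\rho^{\pm}$: an element common to both subgroups has trivial root coordinates and must equal the identity. This yields the semidirect product structure in each of the four cases. The moreover clause is now immediate, since $U_\sigma^{+}(\Phi, R)$ is generated by $\{x_{[\alpha]}(t) : [\alpha] \in \Phi_\rho^+\}$, and the disjoint decomposition $\Phi_\rho^+ = \Phi_i^+ \sqcup \Sigma_i$ partitions these generators into the two subgroups $U_\sigma^{+}(\Phi_i, R)$ and $U_\sigma^{+}(\Sigma_i, R)$; the analogous argument handles $U_\sigma^{-}(\Phi, R)$. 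The main potential obstacle is the clean handling of Chevalley commutator formulas in the ${}^2A_{2n}$ case, where half-sums of roots of type $A_1^2$ can arise; but because $m_i$ is additive under every combination appearing in those formulas, the strict-sign argument is not affected.
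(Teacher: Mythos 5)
Your argument is correct and is precisely the standard one the paper invokes: the paper omits the proof, remarking only that the lemma ``follows directly from the Chevalley commutator formulas,'' and your write-up fills in exactly those details (the sign/ideal argument on the coefficient $m_i$, normality via the commutator formulas including the half-sum terms in type ${}^2A_{2n}$, and trivial intersection from uniqueness of expression in $U_\sigma^{\pm}$). No substantive gap; at most you could make explicit that in the mixed-sign cases the trivial intersection reduces to the standard fact $U_\sigma^{+}(\Phi,R)\cap U_\sigma^{-}(\Phi,R)=\{e\}$.
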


The proof of this lemma is well-known and follows directly from the Chevalley commutator formulas; we omit the details. 
Before we state the rank reduction theorem, we note the following auxiliary lemma, which is also well known.

\begin{lemma}\label{lemma:symmetric generator}
    Let $G$ be a group, and let $X \subset G$ be a symmetric generating set (i.e., $X^{-1} = X$). Suppose $Y \subset G$ is non-empty and satisfies $XY \subset Y$. Then $Y = G$.
\end{lemma}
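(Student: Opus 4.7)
The plan is to exploit the hypothesis $XY \subseteq Y$ iteratively, turning it into $X^n Y \subseteq Y$ for every $n \geq 0$, and then use the fact that $X$ generates $G$ together with the symmetry $X = X^{-1}$ to write every element of $G$ as a finite word in $X$ alone.

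Concretely, I would first fix an element $y_0 \in Y$, which exists by the non-emptiness assumption. Next, I would prove by induction on $n \geq 0$ that for all $x_1, \dots, x_n \in X$, the element $x_1 x_2 \cdots x_n y_0$ lies in $Y$. The base case $n = 0$ is just $y_0 \in Y$, and the inductive step is immediate:
\[
    x_1 (x_2 \cdots x_n y_0) \in X Y \subseteq Y,
\]
using the inductive hypothesis $x_2 \cdots x_n y_0 \in Y$.

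Finally, given any $g \in G$, I consider the element $g y_0^{-1} \in G$. Since $X$ generates $G$ as a group and $X^{-1} = X$, every element of $G$ can be expressed as a finite product of elements of $X$ (no inverses required). Thus there exist $n \geq 0$ and $x_1, \dots, x_n \in X$ with $g y_0^{-1} = x_1 x_2 \cdots x_n$, so that $g = x_1 x_2 \cdots x_n y_0 \in Y$ by the induction above. Hence $G \subseteq Y$, and the reverse inclusion is trivial, giving $Y = G$.

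There is really no obstacle here: the lemma is a standard orbit-type argument. The only subtlety worth flagging is that the symmetry of $X$ is essential, since otherwise $X$ would only generate $G$ as a monoid for words of elements of $X \cup X^{-1}$, and the inductive argument uses closure of $Y$ only under left multiplication by elements of $X$ itself.
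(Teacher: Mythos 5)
Your proof is correct and follows essentially the same route as the paper: fix $y_0 \in Y$, write $g y_0^{-1}$ as a word in $X$ (using $X^{-1}=X$ so no inverses are needed), and apply $XY \subseteq Y$ inductively to conclude $g \in Y$. You simply make the induction explicit where the paper leaves it implicit.
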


\begin{proof}
    Since $Y$ is non-empty, choose an element $y_0 \in Y$. We aim to show that every element $g \in G$ lies in $Y$.  
    Consider $g' = g y_0^{-1} \in G$. Since $X$ generates $G$, there exist elements $x_1, \dotsc, x_n \in X$ such that $g' = x_1 \cdots x_n$.
    Thus, $g = g' y_0 = x_1 \cdots x_n y_0$.
    Since $XY \subset Y$ and $X$ is symmetric, it follows that multiplying elements of $Y$ on the left by elements of $X$ keeps the result in $Y$. Applying this inductively, we conclude that $g \in Y$. Therefore, $Y = G$.
\end{proof}

%%%%%%%%%%%%%%%%%%%%%%%%%%%%%%%%%%%%%%%%%%%%%%%%%%%%%%%%%%%%%%

\begin{thm}\label{thm:rank reduction theorem}
    \normalfont
    Let $\Phi_\rho$ be a twisted root system of type ${}^2 A_{n} \ (n \geq 2), {}^2 D_{n} \ (n \geq 4), {}^2 E_6$, or ${}^3 D_4$.
    Let $R$ be a commutative ring with unity, and suppose there exists an automorphism $\theta: R \to R$ of order $2$ if $\Phi_\rho \not\sim {}^3D_4$, and of order $3$ if $\Phi_\rho \sim {}^3D_4$. 
    \begin{enumerate}[(a)]
        \item Assume that for the subsystems $\Phi_i$ $(i = 1 \text{ and } \ell)$, the elementary twisted Chevalley group $E'_\sigma(\Phi_i, R)$ admits a triangular factorization of the form
        \[
            E'_\sigma(\Phi_i, R) = H'_\sigma(\Phi_i, R) \, \underbrace{U_\sigma^{+}(\Phi_i, R) \, U_\sigma^{-}(\Phi_i, R) \dotsm U_\sigma^{\pm}(\Phi_i, R)}_{L_a \text{ factors}}
        \]
        of length $L_a$ for some $L_a \in \mathbb{N}$. Then the elementary twisted Chevalley group $E'_\sigma(\Phi, R)$ also admits a triangular factorization
        \[
            E'_\sigma(\Phi, R) = H'_\sigma(\Phi, R) \, \underbrace{U_\sigma^{+}(\Phi, R) \, U_\sigma^{-}(\Phi, R) \dotsm U_\sigma^{\pm}(\Phi, R)}_{L_a \text{ factors}}
        \]
        of the same length $L_a$.
    
        \item Assume that for the subsystems $\Phi_i$ $(i = 1 \text{ and } \ell)$, the elementary twisted Chevalley group $E'_\sigma(\Phi_i, R)$ admits a unitriangular factorization of the form
        \[
            E'_\sigma(\Phi_i, R) = \underbrace{U_\sigma^{+}(\Phi_i, R) \, U_\sigma^{-}(\Phi_i, R) \dotsm U_\sigma^{\pm}(\Phi_i, R)}_{L_b \text{ factors}}
        \]
        of length $L_b$ for some $L_b \in \mathbb{N}$. Then the elementary twisted Chevalley group $E'_\sigma(\Phi, R)$ also admits a unitriangular factorization
        \[
            E'_\sigma(\Phi, R) = \underbrace{U_\sigma^{+}(\Phi, R) \, U_\sigma^{-}(\Phi, R) \dotsm U_\sigma^{\pm}(\Phi, R)}_{L_b \text{ factors}}
        \]
        of the same length $L_b$.
    \end{enumerate}
\end{thm}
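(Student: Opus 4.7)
The plan is to apply Lemma~\ref{lemma:symmetric generator}. Let $Y \subset E'_\sigma(\Phi, R)$ denote the right-hand side of the desired factorization (of length $L = L_a$ or $L_b$), and let
\[
    X = \bigcup_{j=1}^{\ell} \{ x_{\pm [\alpha_j]}(t) : t \in R_{[\alpha_j]} \}
\]
be the symmetric set of simple-root generators and their inverses. Since $\langle X \rangle = E'_\sigma(\Phi, R)$ (a standard consequence of the Chevalley commutator relations in the twisted setting) and $Y$ contains the identity, it suffices to establish $X \cdot Y \subseteq Y$; Lemma~\ref{lemma:symmetric generator} then yields $Y = E'_\sigma(\Phi, R)$.

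For each $j \in \{1, \ldots, \ell\}$, the rank condition $\ell \geq 2$ satisfied by all the listed root systems permits a choice of $i = i(j) \in \{1, \ell\}$ with $i \neq j$. Then $\pm [\alpha_j] \in \Phi_i$, so $x_{\pm [\alpha_j]}(t) \in E'_\sigma(\Phi_i, R)$. Hence it suffices to prove $E'_\sigma(\Phi_i, R) \cdot Y \subseteq Y$ for $i \in \{1, \ell\}$. Substituting the hypothesized factorization of $E'_\sigma(\Phi_i, R)$, and (in the triangular case) pulling the leading torus factor $H'_\sigma(\Phi_i, R) \subseteq H'_\sigma(\Phi, R)$ to the outside via the fact that $H'_\sigma(\Phi, R)$ normalizes every $U^\pm_\sigma(\Phi_i, R)$, the problem reduces to the purely unipotent set-theoretic inclusion
\[
    P_i \cdot Q \subseteq Q,
\]
where $P_i = U^+_\sigma(\Phi_i, R) U^-_\sigma(\Phi_i, R) \cdots U^\pm_\sigma(\Phi_i, R)$ and $Q = U^+_\sigma(\Phi, R) U^-_\sigma(\Phi, R) \cdots U^\pm_\sigma(\Phi, R)$, both of length $L$.

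The key computation invokes Lemma~\ref{lemma:Levi decomposition} to decompose each factor $U^\pm_\sigma(\Phi, R)$ appearing in $Q$ as $U^\pm_\sigma(\Phi_i, R) \cdot U^\pm_\sigma(\Sigma_i, R)$. The crucial structural input is that both $U^+_\sigma(\Phi_i, R)$ and $U^-_\sigma(\Phi_i, R)$ normalize both $U^+_\sigma(\Sigma_i, R)$ and $U^-_\sigma(\Sigma_i, R)$: this follows from the Chevalley commutator formula together with the observation that whenever $[\alpha] \in \pm \Phi_i$ and $[\beta] \in \pm \Sigma_i$, any positive-integer combination $a [\alpha] + b [\beta]$ satisfies $m_i(a [\alpha] + b [\beta]) = b \cdot m_i([\beta])$, preserving the sign. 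Sliding every $U^\pm_\sigma(\Sigma_i, R)$-factor in $Q$ past every $U^\pm_\sigma(\Phi_i, R)$-factor rewrites $Q$ as $P_i \cdot S$, where $S = U^+_\sigma(\Sigma_i, R) U^-_\sigma(\Sigma_i, R) \cdots U^\pm_\sigma(\Sigma_i, R)$. Since $P_i$ together with $H'_\sigma(\Phi_i, R)$ exhausts $E'_\sigma(\Phi_i, R)$, the product $P_i \cdot P_i$ lies inside $H'_\sigma(\Phi_i, R) \cdot P_i$; this additional torus factor is absorbed in the triangular case (as above) and is trivial in the unitriangular case (where $P_i$ already equals the whole group). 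Re-interleaving the $U^\pm_\sigma(\Sigma_i, R)$-factors back into the $U^\pm_\sigma(\Phi, R)$-slots recovers $Q$, establishing the inclusion.

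The main obstacle I anticipate is the meticulous bookkeeping of these commutations in the twisted setting, particularly for equivalence classes $[\alpha] \sim A_2$, where the unipotent elements $x_{[\alpha]}(t,u)$ are parametrized by pairs in $\mathcal{A}(R)$ rather than by a single ring element, and the corresponding Chevalley commutator relations are more intricate. Nevertheless, since Lemma~\ref{lemma:Levi decomposition} and the requisite commutator formulas have already been established for twisted Chevalley groups in the cited references, no genuinely new ingredient beyond Tavgen's original rank-reduction argument should be required.
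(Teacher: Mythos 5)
Your proposal is correct and follows essentially the same route as the paper's proof: the symmetric-generating-set lemma applied to $Y$, the choice of $i \in \{1,\ell\}$ with the simple root lying in $\Phi_i$, the Levi decomposition $U^{\pm}_\sigma(\Phi,R) = U^{\pm}_\sigma(\Phi_i,R)\,U^{\pm}_\sigma(\Sigma_i,R)$ with the four normalization relations, and absorption of the resulting torus factor $H'_\sigma(\Phi_i,R) \subseteq H'_\sigma(\Phi,R)$ in the triangular case. The only cosmetic difference is that you establish the slightly stronger inclusion $E'_\sigma(\Phi_i,R)\,Y \subseteq Y$ rather than just $x_{[\alpha]}(t)\,Y \subseteq Y$ for the generators, which changes nothing of substance.
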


\begin{proof}
    First, observe that the set
    \[
        X = \{ x_{[\alpha]}(t) \mid \pm [\alpha] \in \Delta_\rho, \, t \in R_{[\alpha]} \}
    \]
    is a symmetric generating set for the group $E'_\sigma(\Phi, R)$. 
    
    \medskip

    \noindent (a) Define the subsets
    \[
        Y_a = H'_\sigma(\Phi, R) \, \underbrace{U_\sigma^{+}(\Phi, R) \, U_\sigma^{-}(\Phi, R) \dotsm U_\sigma^{\pm}(\Phi, R)}_{L_a \text{ factors}} \subset E'_\sigma(\Phi, R).
    \]
    To prove part~(a), we need to show that $Y_a = E'_\sigma (\Phi, R)$.
    By Lemma~\ref{lemma:symmetric generator}, it suffices to prove that $X Y_a \subset Y_a$.

    Let $x_{[\alpha]}(t) \in X$. Since $\mathrm{rk}(\Phi) \geq 2$, the root $[\alpha]$ belongs to at least one of the subsystems $\Phi_i$ for $i = 1$ or $\ell$. Choose such an index $i$, and let $\Sigma_i$ be defined as above. Then, by Lemma~\ref{lemma:Levi decomposition}, we can write 
    \[
        U_\sigma^{\pm}(\Phi, R) = U_\sigma^{\pm}(\Phi_i, R) \, U_\sigma^{\pm}(\Sigma_i, R).
    \]
    Moreover, $U_{\sigma}^{\pm}(\Phi_i, R)$ normalizes $U_{\sigma}^{\pm}(\Sigma_i, R)$ (for all four possible sign combinations).
    Therefore, we have
    \[
        Y_a = H'_\sigma(\Phi, R) \, U_\sigma^{+}(\Phi_i, R) \, U_\sigma^{-}(\Phi_i, R) \dotsm U_\sigma^{\pm}(\Phi_i, R) \cdot U_\sigma^{+}(\Sigma_i, R) \dotsm U_\sigma^{\pm}(\Sigma_i, R).
    \]
    Since $[\alpha] \in \Phi_i$, it follows that
    \[
        x_{[\alpha]}(a) \, E'_\sigma(\Phi_i, R) \subset E'_\sigma(\Phi_i, R),
    \]
    for every $a \in R_{[\alpha]}$. By our assumption of part (a),
    \[
        E'_\sigma(\Phi_i, R) = H'_\sigma(\Phi_i, R) \, U_\sigma^{+}(\Phi_i, R) \, U_\sigma^{-}(\Phi_i, R) \dotsm U_\sigma^{\pm}(\Phi_i, R).
    \]
    Hence,
    \begin{align*}
        & \hspace{-10mm} x_{[\alpha]}(t) \, H'_\sigma(\Phi, R) \, U_\sigma^{+}(\Phi_i, R) \, U_\sigma^{-}(\Phi_i, R) \dotsm U_\sigma^{\pm}(\Phi_i, R) \\
        &= x_{[\alpha]}(t) \, H'_\sigma(\Phi, R) \, E'_\sigma(\Phi_i, R) \\
        &= H'_\sigma(\Phi, R) \, x_{[\alpha]}(t') \, E'_\sigma(\Phi_i, R) \\
        &\subset H'_\sigma(\Phi, R) \, E'_\sigma(\Phi_i, R) \\
        &= H'_\sigma(\Phi, R) \, U_\sigma^{+}(\Phi_i, R) \, U_\sigma^{-}(\Phi_i, R) \dotsm U_\sigma^{\pm}(\Phi_i, R).
    \end{align*}
    Therefore, $x_{[\alpha]}(t) Y_a \subset Y_a$.
    
    \medskip
    
    \noindent (b) Similarly, define the subsets 
    \[
        Y_b = \underbrace{U_\sigma^{+}(\Phi, R) \, U_\sigma^{-}(\Phi, R) \dotsm U_\sigma^{\pm}(\Phi, R)}_{L_b \text{ factors}} \subset E'_\sigma(\Phi, R).
    \]
    To prove part~$(b)$, we need to show that $Y_b = E'_\sigma (\Phi, R)$.
    By Lemma~\ref{lemma:symmetric generator}, it suffices to prove that $X Y_b \subset Y_b$.

    Let $x_{[\alpha]}(t) \in X$. Since $\mathrm{rk}(\Phi) \geq 2$, the root $[\alpha]$ belongs to at least one of the subsystems $\Phi_i$ for $i = 1$ or $\ell$. Choose such an index $i$, and let $\Sigma_i$ be defined as above. Then, by Lemma~\ref{lemma:Levi decomposition}, we can write 
    \[
        U_\sigma^{\pm}(\Phi, R) = U_\sigma^{\pm}(\Phi_i, R) \, U_\sigma^{\pm}(\Sigma_i, R).
    \]
    Moreover, $U_{\sigma}^{\pm}(\Phi_i, R)$ normalizes $U_{\sigma}^{\pm}(\Sigma_i, R)$ (for all four possible sign combinations).
    Therefore, we have
    \[
        Y_b = U_\sigma^{+}(\Phi_i, R) \, U_\sigma^{-}(\Phi_i, R) \dotsm U_\sigma^{\pm}(\Phi_i, R) \cdot U_\sigma^{+}(\Sigma_i, R) \, U_\sigma^{-}(\Sigma_i, R) \dotsm U_\sigma^{\pm}(\Sigma_i, R).
    \]
    Again, since $[\alpha] \in \Phi_i$, we have
    \[
        x_{[\alpha]}(t) \, E'_\sigma(\Phi_i, R) \subset E'_\sigma(\Phi_i, R),
    \]
    for every $a \in R_{[\alpha]}$. By the assumption in part (b),
    \[
        E'_\sigma(\Phi_i, R) = U_\sigma^{+}(\Phi_i, R) \, U_\sigma^{-}(\Phi_i, R) \dotsm U_\sigma^{\pm}(\Phi_i, R).
    \]
    Hence,
    \[
        x_{[\alpha]}(t) \, U_\sigma^{+}(\Phi_i, R) \, U_\sigma^{-}(\Phi_i, R) \dotsm U_\sigma^{\pm}(\Phi_i, R)
        \subset U_\sigma^{+}(\Phi_i, R) \, U_\sigma^{-}(\Phi_i, R) \dotsm U_\sigma^{\pm}(\Phi_i, R).
    \]
    Therefore, $x_{[\alpha]}(t) Y_b \subset Y_b$.
\end{proof}

%%%%%%%%%%%%%%%%%%%%%%%%%%%%%%%%%%%%%%%%%%%%%%%%%%%%%%%%%%%%%%

To complete the proofs of Theorem~\ref{thm:triangular decomposition} and Theorem~\ref{thm:unitriangular decomposition}, it remains to verify the corresponding base cases. These are established by Lemma~\ref{lemma:base case for triangular} and Lemma~\ref{lemma:base case for unitriangular}, respectively. 

\begin{lemma}\label{lemma:base case for triangular}
    \normalfont
    Let $\Phi_\rho$ be a twisted root system of type ${}^2 A_n \ (n \geq 2), {}^2 D_n \ (n \geq 4), {}^2E_6$ or ${}^3 D_4$.
    Let $R$ be a commutative ring with unity, and suppose there exists a ring automorphism $\theta: R \longrightarrow R$ of order $2$ if $\Phi_\rho \not\sim {}^3 D_4$, or of order $3$ if $\Phi_\rho \sim {}^3 D_4$. 
    Fix a class $[\alpha] \in \Phi_\rho$, and let $\Phi_{[\alpha]}$ denote the root subsystem $\{ \pm[\alpha] \}$. Assume the following conditions:
    \begin{enumerate}[(a)]
        \item $R_\theta$ satisfies the $(SR_1)$ condition if $[\alpha] \sim A_1$;
        \item $R$ satisfies the $(SR_1)$ condition if $[\alpha] \sim A_1^2$ or $A_1^3$;
        \item $R$ satisfies the $(SSR_1)$ condition if $[\alpha] \sim A_2$.
    \end{enumerate}
    Then
    \[
        E'_\sigma(\Phi_{[\alpha]}, R) = H'_\sigma(\Phi_{[\alpha]}, R) \, U^{+}_\sigma(\Phi_{[\alpha]}, R) \, U^{-}_\sigma(\Phi_{[\alpha]}, R) \, U^{+}_\sigma(\Phi_{[\alpha]}, R).
    \]
\end{lemma}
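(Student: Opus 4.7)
The plan is to prove Lemma~\ref{lemma:base case for triangular} by splitting into four cases according to the type of the equivalence class $[\alpha]$, namely $A_1$, $A_1^2$, $A_1^3$, or $A_2$, and in each case reducing the rank-one factorization to one that has already been established earlier in the paper.

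For the three cases $[\alpha] \sim A_1^k$ with $k = 1, 2, 3$, the strategy is to exhibit a natural group isomorphism
\[
E'_\sigma(\Phi_{[\alpha]}, R) \xrightarrow{\sim} E(A_1, R'),
\]
where $R' = R_\theta$ when $k = 1$ and $R' = R$ when $k = 2, 3$, and to verify that this isomorphism sends $H'_\sigma(\Phi_{[\alpha]}, R)$ to $H(A_1, R')$ and $U^\pm_\sigma(\Phi_{[\alpha]}, R)$ to $U^\pm(A_1, R')$. The case $k = 1$ is essentially tautological, since $x_{[\alpha]}(t) = x_\alpha(t)$ with $t$ ranging over $R_\theta$. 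For $k = 2, 3$ the identification relies on the fact (cf. Lemma~2.1 of \cite{SG&DM1}) that the roots $\alpha, \bar\alpha$ and, when $k = 3$, $\bar{\bar\alpha}$, are pairwise orthogonal, so that the Chevalley commutator formulas imply that the map sending $x_{[\alpha]}(t) = x_\alpha(t)x_{\bar\alpha}(\bar t)\cdots$ to $x_\alpha(t) \in E(A_1, R)$ is a group isomorphism (its inverse being the diagonal-type embedding $t \mapsto x_\alpha(t)x_{\bar\alpha}(\bar t)\cdots$). Once this identification is in place, the hypothesis that $R'$ satisfies $(SR_1)$ allows me to invoke Theorem~\ref{thm:SSV2} for the rank-one root system $A_1$ over $R'$, which yields exactly the required Gauss decomposition of length three.

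The case $[\alpha] \sim A_2$ is handled differently. Here $E'_\sigma(\Phi_{[\alpha]}, R)$ is canonically identified with the subgroup $E'_\sigma(3, R) \subseteq SU(3, R)$ constructed in Section~3, with matching $H'_\sigma(3, R)$ and $U^\pm_\sigma(3, R)$. By the implication $(a) \Rightarrow (c)$ of Theorem~\ref{thm:SSR_1}, the $(SSR_1)$ hypothesis gives
\[
SU(3, R) = T_\sigma(3, R)\, U^+_\sigma(3, R)\, U^-_\sigma(3, R)\, U^+_\sigma(3, R).
\]
For any $g \in E'_\sigma(3, R)$, I would write $g = h\, u_+\, u_-\, u_+'$ with $h \in T_\sigma(3, R)$ and $u_+, u_-, u_+' \in U^\pm_\sigma(3, R) \subseteq E'_\sigma(3, R)$, and then note that the element $h = g\,(u_+ u_- u_+')^{-1}$ automatically lies in $T_\sigma(3, R) \cap E'_\sigma(3, R) = H'_\sigma(3, R)$. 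This gives precisely the factorization demanded by the lemma.

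The main technical point will be verifying the isomorphisms $E'_\sigma(\Phi_{[\alpha]}, R) \cong E(A_1, R')$ in the three $A_1^k$ cases, particularly ensuring that $H'_\sigma$ matches $H(A_1, R')$ rather than a strictly smaller subgroup. By contrast, the $A_2$ case reduces almost immediately to Theorem~\ref{thm:SSR_1}$(c)$, with the only subtlety being the observation that the torus factor produced there must live in $H'_\sigma$ and not merely in $T_\sigma$; no further new ingredient appears to be needed.
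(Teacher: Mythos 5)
Your proposal is correct and takes essentially the same route as the paper: the paper handles the $A_1$, $A_1^2$, $A_1^3$ cases by the same identification of the rank-one subgroup with $E(A_1, R_\theta)$ resp.\ $E(A_1, R)$ and a citation of the rank-one Gauss decomposition from \cite{SSV2} (its Lemma~2.1, the base case of Theorem~\ref{thm:SSV2}), and settles the $A_2$ case by Theorem~\ref{thm:SSR_1}. Your added observation that the torus factor $g(u_+u_-u_+')^{-1}$ lies in $T_\sigma(3,R)\cap E'_\sigma(3,R)=H'_\sigma(3,R)$ is precisely the step the paper leaves implicit, so nothing is missing.
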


\begin{proof}
    If $[\alpha] \sim A_1$, $A_1^2$, or $A_1^3$, the result follows from Lemma~2.1 of \cite{SSV2}. If $[\alpha] \sim A_2$, it follows from Theorem~\ref{thm:SSR_1}.
\end{proof}

\begin{lemma}\label{lemma:base case for unitriangular}
    \normalfont
    Let $\Phi_\rho$ be a twisted root system of type ${}^2 A_n \ (n \geq 2), {}^2 D_n \ (n \geq 4), {}^2E_6$ or ${}^3 D_4$.
    Let $R$ be a commutative ring with unity, and suppose there exists a ring automorphism $\theta: R \longrightarrow R$ of order $2$ if $\Phi_\rho \not\sim {}^3 D_4$, or of order $3$ if $\Phi_\rho \sim {}^3 D_4$. 
    Fix a class $[\alpha] \in \Phi_\rho$, and let $\Phi_{[\alpha]}$ denote the root subsystem $\{ \pm[\alpha] \}$. Assume the following conditions:
    \begin{enumerate}[(a)]
        \item $R_\theta$ satisfies the $(SR_1)$ condition if $[\alpha] \sim A_1$;
        \item $R$ satisfies the $(SR_1)$ condition if $[\alpha] \sim A_1^2$ or $A_1^3$;
        \item $R$ is $\theta$-complete with $\mathcal{C}$-length $k$ and satisfies the $(SSR_1)$ condition if $[\alpha] \sim A_2$.
    \end{enumerate}
    Then
    \[
        E'_\sigma(\Phi_{[\alpha]}, R) = 
        \begin{cases}
            \left( U^{+}_\sigma(\Phi_{[\alpha]}, R) \, U^{-}_\sigma(\Phi_{[\alpha]}, R) \right)^2 & \text{if } [\alpha] \sim A_1, A_1^2, \text{ or } A_1^3, \\
            \left( U^{+}_\sigma(\Phi_{[\alpha]}, R) \, U^{-}_\sigma(\Phi_{[\alpha]}, R) \right)^{k+1} \, U^{+}_\sigma(\Phi_{[\alpha]}, R) & \text{if } [\alpha] \sim A_2.
        \end{cases}
    \]
\end{lemma}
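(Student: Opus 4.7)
The plan is to handle each of the four possibilities for the type of $[\alpha]$ separately, in each case identifying $E'_\sigma(\Phi_{[\alpha]}, R)$ with a group whose unitriangular factorization has already been established. For $[\alpha] \sim A_1, A_1^2, A_1^3$ the target will be an untwisted $A_1$-elementary group over a suitable base ring, and the desired length-$4$ factorization will then follow from Theorem~\ref{thm:SSV1}. For $[\alpha] \sim A_2$, the target will be $E'_\sigma(3, R)$, and the desired length-$(2k+3)$ factorization will follow from Corollary~\ref{cor:unitriangular for SU(3,R)}.

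First, when $[\alpha] \sim A_1$ we have $\alpha = \bar\alpha$ and $U^\pm_\sigma(\Phi_{[\alpha]}, R) = \{x_{\pm\alpha}(t) : t \in R_\theta\}$. The assignment $x_{\pm\alpha}(t) \mapsto x_{\pm\alpha}(t)$ extends to an isomorphism $E'_\sigma(\Phi_{[\alpha]}, R) \cong E(A_1, R_\theta)$ carrying $U^\pm_\sigma(\Phi_{[\alpha]}, R)$ onto the standard unipotent subgroups. Since $R_\theta$ satisfies $(SR_1)$, Theorem~\ref{thm:SSV1} applied with root system $A_1$ over $R_\theta$ yields the required length-$4$ factorization.

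Next, when $[\alpha] \sim A_1^2$ or $A_1^3$, the positive roots inside $[\alpha]$ are mutually orthogonal, so the corresponding root subgroups commute pairwise. The composite formulas $x_{[\alpha]}(t) = x_\alpha(t)\,x_{\bar\alpha}(\bar t)$ (respectively with an additional factor $x_{\bar{\bar\alpha}}(\bar{\bar t})$) parametrize $U^\pm_\sigma(\Phi_{[\alpha]}, R)$ by $(R, +)$, and the resulting identification $E'_\sigma(\Phi_{[\alpha]}, R) \cong E(A_1, R)$ matches the unipotent subgroups on both sides. Applying Theorem~\ref{thm:SSV1} with $\Phi = A_1$ over $R$ (which satisfies $(SR_1)$) again produces the length-$4$ factorization.

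Finally, when $[\alpha] \sim A_2$, the parametrization $x_{\pm[\alpha]}(t,u)$ of Section~2 matches, up to a direct translation of the Chevalley structure constants $N_{\bar\alpha,\alpha}$, the matrices $x_\pm(t,u)$ of Section~3. This provides an isomorphism $E'_\sigma(\Phi_{[\alpha]}, R) \cong E'_\sigma(3, R)$ carrying $U^\pm_\sigma(\Phi_{[\alpha]}, R)$ onto $U^\pm_\sigma(3, R)$. Under the hypotheses that $R$ is $\theta$-complete with $\mathcal{C}$-length $k$ and satisfies $(SSR_1)$, Corollary~\ref{cor:unitriangular for SU(3,R)} supplies the factorization
\[
    E'_\sigma(3, R) = \left(U^+_\sigma(3,R)\,U^-_\sigma(3,R)\right)^{k+1} U^+_\sigma(3,R),
\]
which pulls back to the stated identity. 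The only step demanding genuine care is checking that the three rank-one identifications really are well defined and do carry root subgroups to root subgroups; this is a routine verification using the Chevalley commutator formulas on the relevant rank-one subsystems, and no deeper obstacle arises.
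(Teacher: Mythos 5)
Your proposal is correct and follows essentially the same route as the paper: the rank-one classes $A_1$, $A_1^2$, $A_1^3$ are reduced to the known length-$4$ unitriangular factorization for untwisted rank-one elementary groups over $R_\theta$ or $R$ (the paper cites Lemma~1 of \cite{SSV1}, which is exactly the rank-one instance you invoke via Theorem~\ref{thm:SSV1}), and the $A_2$ class is handled by Corollary~\ref{cor:unitriangular for SU(3,R)}. The identifications of $E'_\sigma(\Phi_{[\alpha]},R)$ with $E(A_1,R_\theta)$, $E(A_1,R)$, and $E'_\sigma(3,R)$ that you spell out are precisely what the paper leaves implicit.
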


\begin{proof}
    If $[\alpha] \sim A_1$, $A_1^2$, or $A_1^3$, the result follows from Lemma~1 of \cite{SSV1}. If $[\alpha] \sim A_2$, it follows from Corollary~\ref{cor:unitriangular for SU(3,R)}.
\end{proof}

%%%%%%%%%%%%%%%%%%%%%%%%%%%%%%%%%%%%%%%%%%%%%%%%%%%%%%%%%%%%%%
%Section: Bibliography
%%%%%%%%%%%%%%%%%%%%%%%%%%%%%%%%%%%%%%%%%%%%%%%%%%%%%%%%%%%%%%

%%%%%%%%%%%%%%%%%%%%%%%%%%%%%%%%%%%%%%%%%%%%%%%%%%%%%%%%%%%%%%
%%%%%%%%%%%%%%%%%%%%%%%%%%%%%%%%%%%%%%%%%%%%%%%%%%%%%%%%%%%%%%

\end{document}